\documentclass[11pt]{amsart}

\usepackage{amsmath}
\usepackage{amssymb}
\usepackage{amsthm}
\usepackage{graphicx} 
\usepackage{url}
\usepackage[bookmarks]{hyperref}
\usepackage[msc-links]{amsrefs} 
\usepackage{multirow}
\usepackage{array}
\usepackage{multirow}
\usepackage{wasysym}
\usepackage[shortlabels]{enumitem}
\usepackage{color}
\usepackage[normalem]{ulem}
\usepackage{soul}
\usepackage{rotate}

\theoremstyle{plain}
\newtheorem{introtheorem}{Theorem}

\newtheorem{introcoro}[introtheorem]{Corollary}

\theoremstyle{plain}
\newtheorem{theorem}{Theorem}[section]

\theoremstyle{definition}
\newtheorem{definition}[theorem]{Definition}
\newtheorem{notation}[theorem]{Notation}

\newtheorem{question}[theorem]{Question}
\newtheorem{remark}[theorem]{Remark}

\newcommand{\term}[1]{{\bf #1}}

\newcommand{\bord}{\partial}
\newcommand{\bsm}{\begin{smallmatrix}}
\newcommand{\esm}{\end{smallmatrix}}

\newcommand{\fgeod}{\Phi_\mathrm{geod}}

\newcommand{\Fs}{\mathcal{F}^s}
\newcommand{\Fu}{\mathcal{F}^u}

\renewcommand{\ge}{\geqslant}

\newcommand{\Hy}{\mathbb{H}^2}

\renewcommand{\le}{\leqslant}

\newcommand{\OO}{\mathcal{O}}

\newcommand{\bpm}{\begin{pmatrix}}
\newcommand{\epm}{\end{pmatrix}}

\newcommand{\HH}{\mathbb{H}}
\newcommand{\RR}{\mathbb{R}}

\newcommand{\DD}{\mathbb{D}}

\newcommand{\SLZ}{\mathrm{SL}_2(\ZZ)}

\newcommand{\slk}{\mathrm{slk}}

\newcommand{\TT}{\mathbb{T}}

\newcommand{\U}{\mathrm{T}^1}

\renewcommand{\vec}[1]{\overset\rightarrow{#1}}
\newcommand{\revec}[1]{\overset\leftarrow{#1}}
\newcommand{\vrevec}[1]{\overset\leftrightarrow{#1}}

\newcommand{\ZZ}{\mathbb{Z}}

\title{Genus one Birkhoff sections for geodesic flows on orbifolds}

\author{Pierre Dehornoy}
\address{Université Aix-Marseille, CNRS, I2M, 13000 Marseille, France}
\email{pierre.dehornoy@univ-amu.fr}
\urladdr{https://www.i2m.univ-amu.fr/perso/pierre.dehornoy/}
\thanks{Supported by the ANR projects Groméov ANR-19-CE40-0007 and AnoDyn ANR-24-CE40-5065}

\date{1st version oct. 2019, current version march. 2026}

\begin{document}

\begin{abstract}
For $\OO$ a hyperbolic orientable 2-orbifold of genus~$g$ with at most $2g+6$ conic points, we prove that the geodesic flow on~$\U\OO$ admits a Birkhoff section whose genus is one.  
Together with a result of Minakawa, this implies that this flow is almost equivalent to the suspension flow of the $(\begin{smallmatrix}2&1\\1&1\end{smallmatrix})$-map on the torus. 
\end{abstract}

\maketitle


\section{Introduction}
\label{S:Introduction}

\subsection{Main statement}

Given a flow~$(\Phi^t)_{t\in\RR}$ on a compact 3-manifold~$M$, a \term{Birkhoff section} 
is an immersion~$\iota:S\to M$ of an oriented surface with boundary~$S$ such that: 
\begin{enumerate}[(i)]
\item the interior of~$\iota(S)$ is embedded in~$M$ and positively transverse to~$\Phi$,
\item the boundary~$\iota(\bord S)$ is immersed in~$M$ and tangent to the flow~$\Phi$.
\item $S$ intersects all orbits in bounded time, \emph{i.e.}, $\exists T>0$ such that $\Phi^{[0,T]}(\iota(S))=M$.
\end{enumerate}

It follows from the definition that the image of $\partial S$ under the immersion $\iota$ is a union ${\bigcup_{j=1}^c\gamma_j}$ of finitely many periodic orbits of the flow, and for each component $\alpha$ of $\partial S$ the restriction $\iota:\alpha\to\gamma_{j(\alpha)}$ is a covering map. 
In this paper, all constructed Birkhoff sections turn out to be {\it embedded}, meaning that these covering maps are in fact homeomorphisms. 
In general we omit the immersion~$\iota$ and regard $S$ directly as a subset in~$M$. 

A 2-dimensional {orbifold}, or \term{2-orbifold}, is a metric space locally modeled on the quotient of a Riemannian surface by a discrete group of isometries. 
In this note, we focus on \term{orientable hyperbolic 2-orbifolds} which can be described as quotients of the hyperbolic plane~$\Hy$ by discrete cocompact groups of orientation-preserving isometries~\cite{ThurstonNotes}. 
Such a 2-orbifold~$\OO$ can then be seen as a topological compact surface~$\Sigma_\OO$ which has a genus~$g$, endowed with a hyperbolic metric, and with finitely many singular points~$a_1, \dots, a_n$, called~{\it conic points}, where it is locally isometric to the quotient~$\mathbb{D}_{p_i}$ of a hyperbolic disc by a group of order~$p_i$ rotations. 
The type of this orientable hyperbolic 2-orbifold is the tuple $(g; p_1, \dots, p_n)$. 
The 2-orbifold is~\term{of small type} if either $g=0$, or $g\ge 1$ and $n\le 2g+6$. 

To every Riemannian 2-orbifold~$\OO$ is associated its \term{unitary tangent bundle}~$\U\OO$ which is a Seifert fibered 3-manifold. 
It admits a \term{geodesic flow}~$\fgeod$ whose orbits are lifts of geodesics on~$\OO$~\cite{Hadamard}. 
When $\OO$ is hyperbolic, $\fgeod$ is an Anosov flow, therefore it is chaotic and structurally stable~\cite{Anosov}. 

The goal of this paper is to prove

\begin{introtheorem}\label{T:G1BS}
If $\OO$ is an orientable hyperbolic 2-orbifold of small type, then the geodesic flow on~$\U\OO$ admits a genus-one Birkhoff section.
\end{introtheorem}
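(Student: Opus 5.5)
The plan is to build the Birkhoff section explicitly from a finite collection of closed geodesics on~$\OO$, using the classical Birkhoff--Fried construction, and then to read off the genus from an Euler characteristic count. Recall the basic mechanism. Let $\Gamma$ be a finite family of closed geodesics that fills~$\OO$, so that every complementary region is a disc containing at most one conic point. Then the set of unit vectors tangent to $\OO$ based on~$\Gamma$ and pointing to one fixed side of each oriented segment is a collection of annuli and discs. Their boundary consists of the lifts $\vec\Gamma$ and $\revec\Gamma$ of~$\Gamma$. After desingularising at the double points, this gives a surface transverse to~$\fgeod$ whose boundary is tangent to the flow. Since $\Gamma$ fills, every geodesic meets $\Gamma$ within bounded time, so the surface is a Birkhoff section. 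Its Euler characteristic is computed locally: each double point of~$\Gamma$ contributes a fixed negative amount, and each conic point of order~$p_i$ changes the count through the $p_i$-fold symmetry of the region containing it. So the first step is to state the formula
\[
\chi(S)=-2\,\#\{\text{double points of }\Gamma\}+\text{(correction from conic points)},
\]
and to record the number of boundary components, which is $2|\Gamma|$ in the generic case. From these two numbers we get $g(S)=1-\tfrac12(\chi(S)+\#\bord S)$.

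The second step is the choice of~$\Gamma$, and this is where the small type hypothesis enters. To have genus one we need a very economical filling family: few double points relative to the number of boundary components. I would take a pants-like decomposition of~$\Sigma_\OO$ adapted to the conic points. For $g=0$, take a chain of curves, each surrounding consecutive pairs of conic points, so that every complementary disc contains exactly one conic point. For $g\ge1$, take the standard chain of $2g+1$ curves $\alpha_1,\beta_1,\gamma_1,\dots$ filling~$\Sigma_\OO$. Each curve meets its neighbours once, and the complementary discs of the chain number at most about $2g+6$ once we allow the curves to be slightly perturbed and to separate nearby conic points. This is the counting that explains the bound $n\le 2g+6$: every conic point must lie in its own complementary disc. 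Each curve is then replaced by its geodesic representative in the orbifold. Here I must check that no new intersections appear, using minimal position of geodesics, and that no complementary disc contains two conic points.

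The third step is to adjust the sides chosen along $\Gamma$, or to combine oppositely oriented copies with Fried sums of several partial sections, so that the resulting surface has Euler characteristic exactly $-\#\bord S$, that is, genus one. Concretely, I would allow each geodesic of~$\Gamma$ to be used with multiplicity and orientation $\pm$, and solve the resulting linear constraint on $\chi$ and on the boundary count. I expect this to be the main obstacle. With the naive choice the genus grows with the number of double points. One has to arrange the lifts so that most double points are resolved without creating handles, for example by taking both orientations $\vec\gamma$ and $\revec\gamma$ along some curves so that the local pieces near a crossing glue into discs rather than saddles. The conic points must be handled uniformly in their orders~$p_i$. I would first verify the construction by hand in the base cases $(0;p_1,p_2,p_3)$ and $(1;p_1,\dots,p_8)$, and then proceed by induction: adding a handle, or a pair of conic points, adds a local piece whose contribution to $\chi(S)+\#\bord S$ is zero.
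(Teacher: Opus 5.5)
\paragraph{The gap: the genus-one step is never carried out.} Your third step is where the theorem lives, and you do not carry it out. Getting genus exactly one is the whole difficulty; the paper's remark after its genus count calls it the crucial part. You leave it as ``the main obstacle'' with a hoped-for linear constraint. Moreover, the pieces you actually describe cannot meet that constraint in general.

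A Birkhoff annulus, i.e.\ a union of vertical rectangles $R(\gamma,s)$, has \emph{both} lifts $\vec\gamma$ and $\revec\gamma$ in its boundary, each oriented against $\fgeod$. So no sum of such pieces loses either lift. Take a checkerboard arrangement $\Gamma$ of closed geodesics in general position, with $V$ transverse double points, all away from the conic points. Then $\chi(S)=-2V$ and $\#\bord S=2|\Gamma|$, so genus one forces $V=|\Gamma|$. In addition, every face may contain at most one conic point, since a face containing two of them contains a closed geodesic that never meets $\Gamma$.

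On a sphere, each curve passes an even number $\ge 2$ of times through double points. So $V=|\Gamma|$ leaves only two possibilities: a lone figure-eight, or two simple curves meeting twice. Either way there are at most $4$ faces. Hence for a sphere with five or more conic points of order $\ge 3$, your approach cannot give genus one.

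The missing idea is a transverse surface bounded by a \emph{single} lift of each curve. The paper supplies this in two ways.
\begin{enumerate}
\item The butterfly surfaces $B(P^-,P^+,a)$ are lifts of curved vector fields turning right on one wing and left on the other, and their vertical boundaries cancel at the body $a$. For the figure-eights $\beta_i$ they give $\chi=-n$ with $n$ boundary orbits.
\item When $g\ge1$ and $n<2g+2$, the annuli $H(K_0,v_0)$ and $H(K_n,v_n)$ carry a singular vector field. Their fibre boundary cancels that of the vertical rectangles, and this is what glues the vertical and horizontal pieces together.
\end{enumerate}

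\paragraph{Errors in the earlier steps.} First, the claim ``$\Gamma$ fills, hence $S$ is a Birkhoff section'' is false. A geodesic crossing $\gamma$ away from the chosen side $s$ misses $R(\gamma,s)$. What forces a positive crossing in bounded time is the alternation of a checkerboard colouring, or for butterflies the bold edges of a dual graph.

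Second, your chain $c_1,\dots,c_{2g+1}$ admits no checkerboard colouring, because a push-off of $c_1$ crosses the arrangement exactly once. Your own count would give $\chi=-4g$ with $4g+2$ boundary components, i.e.\ a planar surface. Remark~\ref{R:Genus} excludes this for any Birkhoff section of $\fgeod$, since each boundary orbit contributes at most $-1$. The chain's complement also consists of only two discs, so it cannot host up to $2g+6$ conic points. ``Perturbing'' closed geodesics does not change their intersection pattern, so that does not help.

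Finally, a conic point lying inside a face does not affect $\chi$ of a vertical surface at all, so your announced correction term is empty. The closing induction is not specified either.
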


Theorem~\ref{T:G1BS} generalizes Fried's seminal result on geodesic flows on hyperbolic surfaces~\cite{FriedAnosov}, as well as several more recent constructions~\cites{GenusOne, HM}. 

\begin{remark}
In the first version of a paper we wrote\footnote{\url{https://arxiv.org/abs/1910.08457v1}}, we claimed that the geodesic flow on {\it any} hyperbolic orbifold admits a genus one Birkhoff section. 
Unfortunately, during the revision process, we realized that, in the most general case, the Birkhoff sections we constructed have genus two. 
As of this writing we cannot bridge the gap, hence our restriction here to small orbifolds (and the need for a different construction when $g>0$). 
We still believe the general result holds, but it probably requires more intricate constructions. 
In particular all the Birkhoff sections we construct here (as well as those arising in the literature so far) are \emph{negative} in the sense that the orientation of every boundary component inherited from the coorientation of the surface by the flow is \emph{opposite} to the orientation induced by the flow. 
It seems likely that, for some orbifolds not of small type, one must consider \emph{mixed} Birkhoff sections, {\it i.e.}, sections with both positive and negative boundary components.  
We therefore ask

\begin{question}
If $\OO$ is a hyperbolic orientable 2-orbifold with positive genus~$g$ with $n>2g+6$ conic points, does $\OO$ admit a genus one Birkhoff section with all negative boundary components? 
\end{question}

Note that a recent work of Tsang~\cite{Tsang} may imply the existence of genus 1 Birkhoff sections for all geodesic flows on orientable hyperbolic 2-orbifolds, without describing them explicitly. 
\end{remark}

\subsection{Motivation and corollaries}
Two flows $\Phi_1$ and $\Phi_2$ on a manifold~$M$ are said to be {\it orbitally equivalent} if there is a homeomorphism of~$M$ that maps oriented orbits of $\Phi_1$ onto oriented orbits of~$\Phi_2$. 
In other words, the 1-dimensional foliations whose leaves are the orbits are conjugated. 

In general, orbit equivalence classes of flows on a given manifold may be numerous (even uncountable) and complicated. 
The situation is different if one restricts to {\it Anosov flows}~\cite{Anosov}. 
Indeed, because of their structural stability ---the perturbation of an Anosov flow is another Anosov flow that is orbitally equivalent to the initial flow--- orbit equivalence classes are open, and there is at most a countable number of them. 

There are two classical families of Anosov flows, namely the suspension flows of hyperbolic matrices in~$\SLZ$, in which case the underlying manifold is a torus bundle over the circle, and geodesic flows on unitary tangent bundles of hyperbolic 2-dimensional orientable orbifolds. 
For each of the underlying manifolds, there is only one orbit equivalence class of Anosov flow. 

Many other Anosov flows can be built, in particular using different types of surgery constructions. 
One of the oldest such surgeries in known as Goodman-Fried surgery~\cite{Goodman, FriedAnosov}. 
One way of presenting it consists in starting with an Anosov flow~$\Phi$ on a 3-manifold~$M$, choosing a finite number of periodic orbits~$\gamma_1, \dots, \gamma_n$, and performing an integral Dehn surgery on each of them. 
A careful analysis~\cite{ShannonThese, Shannon} ensures that the obtained flow on the surgered manifold can be reparametrized to be of Anosov type. 
Goodman-Fried surgery changes the underlying manifold by a Dehn surgery, so this procedure changes the orbit equivalence class of the flow. 
One can then define a weaker equivalence relation: two flows~$\Phi_1, \Phi_2$ on two 3-manifolds $M_1, M_2$ are \term{almost equivalent} if they differ by a Goodman-Fried surgery and an orbit equivalence. 

Properties (i) and (iii) in the definition of a Birkhoff section imply that, for $\iota(S)$ a Birkhoff section of a flow~$\Phi$, there is an induced \term{first-return map} $f_S:S\to S$, and so in the open 3-manifold obtained by removing $\iota(\partial S)$ the flow is a suspension flow. 
By filling the boundary tori of this manifold, we see that $\Phi$ is almost equivalent to a suspension flow. 
As noticed by Fried~\cite{FriedAnosov}, when the flow is Anosov, the first-return map is pseudo-Anosov. 
This applies to the geodesic flows we consider. 
It is not hard to see that, in the context of Theorem~\ref{T:G1BS}, the first-return on the closure of $S$ is actually Anosov, {\it i.e.} it has no singularity, so it is conjugated to a linear hyperbolic automorphism of the torus with positive trace. 
This implies that the considered geodesic flow is almost equivalent to te suspension on some mapping torus of the form~$\TT^3_{(\begin{smallmatrix}2&1\\1&1\end{smallmatrix})}$. 
A theorem of Minakawa~\cite{DS} asserts that, under this assumption, the latter flow is almost equivalent to the suspension flow on~$\TT^3_{(\begin{smallmatrix}2&1\\1&1\end{smallmatrix})}$, so one gets 

\begin{introcoro}\label{T:AE2}
If $\OO$ is a 2-dimensional hyperbolic orientable orbifold of small type, then the geodesic flow on~$\U\OO$ is almost equivalent to the suspension flow on~$\TT^3_{(\begin{smallmatrix}2&1\\1&1\end{smallmatrix})}$.
\end{introcoro}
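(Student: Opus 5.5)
The statement to prove is Corollary~\ref{T:AE2}, and the plan follows the chain sketched just before it: Theorem~\ref{T:G1BS}, then a first-return map, then a Goodman--Fried surgery, then Minakawa's theorem.

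First I apply Theorem~\ref{T:G1BS}. It gives a genus-one Birkhoff section $S$ for $\fgeod$ on $\U\OO$, and I take $S$ embedded with all boundary components negative, as in our constructions. By properties (i) and (iii) of a Birkhoff section, the first-return map $f_S$ is defined on the interior of $S$. On $\U\OO\setminus\partial S$ the flow is then orbitally equivalent to the suspension of $f_S$. Since $\fgeod$ is Anosov, Fried's argument shows that $f_S$ extends to the compactified surface as a pseudo-Anosov map. Its invariant foliations are the traces of the weak stable and unstable foliations $\Fs,\Fu$ on $S$.

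The key step is to show that this pseudo-Anosov map has no singularities after the boundary is collapsed, so that on the closed torus $\hat S$ it is Anosov. I would count prongs. Near a boundary orbit $\gamma$, the traces of $\Fs$ and $\Fu$ on $S$ spiral, and the number of prongs at the puncture is determined by how many times $\partial S$ winds along $\gamma$ relative to the weak stable leaf of $\gamma$. For our negative sections, each boundary component is expected to wrap once with the right slope, which gives a $2$-prong point, i.e.\ a regular point. Interior singularities cannot occur because $S$ is transverse to a nonsingular Anosov flow. Euler--Poincaré on the torus ($\chi=0$) then forces the total prong defect to vanish, which is consistent with this.

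With that in hand, $f_S$ on $\hat S$ is conjugate to a hyperbolic element of $\SLZ$. Its trace is positive because $f_S$ preserves the coorientation of the foliations coming from the flow, and it has no rotation at the fixed leaves. Filling each boundary torus of $\U\OO\setminus\partial S$ by the disc in $\hat S$ is an integral Dehn surgery on the orbits $\gamma_j$. By the Goodman--Fried surgery, this makes $\fgeod$ almost equivalent to the suspension flow of a positive-trace hyperbolic matrix $A\in\SLZ$ on $\TT^3_A$.

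Finally, Minakawa's theorem~\cite{DS} says that every such suspension is almost equivalent to the suspension of $\bigl(\begin{smallmatrix}2&1\\1&1\end{smallmatrix}\bigr)$. Since almost equivalence is transitive, this finishes the proof. I expect the main obstacle to be the prong computation at the boundary. It needs explicit local control of how $S$ meets each $\gamma_j$, which must be read off from the actual construction behind Theorem~\ref{T:G1BS}. If the boundary slope is wrong, the punctures become $1$-prong or $k$-prong points and the argument breaks down.
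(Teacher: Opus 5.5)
Your chain of reductions is the paper's own: Theorem~\ref{T:G1BS}, then Fried's pseudo-Anosov first-return map, then absence of singularities on the closed torus $\hat S$, then a linear Anosov map of positive trace, then filling the boundary tori, then Minakawa. The problem is the one step with real content, namely that the collapsed boundary points are regular. You leave it as ``expected'' and propose to read it off from the local geometry of each construction, so as written this is a gap. It is not construction-dependent, though. It is forced by the genus alone, and your own Euler--Poincar\'e remark is one observation away from proving it.

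The missing observation is that every prong number at a collapsed boundary point is even and at least $2$.
\begin{itemize}
\item The stable bundle $E^s$ of $\fgeod$ is orientable, and the flow preserves its orientation: on $\U\OO=\Gamma\backslash\mathrm{PSL}_2(\RR)$ it is spanned by the descended horocycle vector field. Hence the local stable leaf of each boundary orbit $\gamma_j$ is an annulus, and it meets the blown-up boundary torus in two parallel curves.
\item Since $\partial S\to\gamma_j$ has degree one, the number $k_j$ of prongs at the point of $\hat S$ obtained by collapsing the $j$-th boundary circle equals the number of tangencies of $S$ with $\Fs$ along $\gamma_j$. That number is $2|s_j|$, where $s_j=\slk^{\Fs,S}(\gamma_j)$.
\item $s_j\neq 0$: otherwise orbits of $W^s(\gamma_j)$ close to $\gamma_j$ would avoid $S$ for arbitrarily long times, contradicting (iii).
\end{itemize}
Euler--Poincar\'e on the torus then reads $0=\chi(\hat S)=\sum_j(1-k_j/2)=\sum_j(1-|s_j|)$. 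This is a sum of non-positive terms, so $k_j=2$ for every $j$.

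This is exactly the content of Remark~\ref{R:Genus}: genus one with $b$ boundary components gives $\chi(S)=-\sum_j|s_j|=-b$, which forces $|s_j|=1$. That remark is what the paper's ``it is not hard to see'' rests on. In particular, the $1$-prong scenario you worry about cannot occur at all, and higher even prong numbers are ruled out by the genus. No inspection of the explicit constructions is needed.

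The remaining steps are fine:
\begin{itemize}
\item positive trace, from the flow-invariant orientations of $E^s$ and $E^u$;
\item integrality of the filling, since $\partial S$ meets each meridian once;
\item transitivity of almost equivalence;
\item the appeal to Minakawa.
\end{itemize}
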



\subsection{Acknowledgements}
I thank Mario Shannon for numerous conversations, and for co-writing a first version of the paper. 
Mario did not want to cosign the second version since the main construction changed and he had no time to check it. 

\subsection{Organisation of the paper}
The construction needed for proving Theorem~\ref{T:G1BS} is rather involved. 
It relies on the description of explicit surfaces in the considered unitary tangent bundles. 
For pedagogic reasons, we first introduce the necessary background in Section~\ref{S:DefGeodesic}. 
Then we introduce a first tool called {\it vertical surfaces} in Section~\ref{S:Vertical}. 
It allows to treat two rather simple cases, namely the case of surfaces (that was already treated in this way by Brunella~\cite{Brunella}) and the case of a sphere with conic points of order~$2$. 
In Section~\ref{S:Horizontal} we introduce another tool called~{\it butterfly surfaces} that allows to treat the case of a sphere with conic points of order at least~$3$ and the case of a genus~$g$ surface with between $2g{+}2$ and~$2g{+}6$ conic points of order at least~$3$.
Finally in Section~\ref{S:General} we combine both tools to treat the general case.

Here is a table summarizing all the cases for the type of the 2-orbifold, and the section in which they are treated: 

\begin{table}[h]
\centering
\begin{tabular}{|c|c|c|c|}
\hline
\multicolumn{2}{|c|}{Sphere} & \multicolumn{2}{c|}{Surface of genus $\ge 1$ with $n$ conic points}  \\ \hline
 $p_1,\dots,p_n=2$ & Section \ref{S:Order2} & $n=0$ & \cite{Brunella}, Section \ref{S:Surface} \\ \hline
 $p_1, \dots, p_n\ge 3$ & Section \ref{S:Sphere}& $2g{+}2\le n\le 2g{+}6$  & Sections \ref{S:Butterfly1}, \ref{S:Butterfly2}\\ \hline
 $p_1, \dots, p_k=2, p_{k+1}, \dots, p_n\ge 3$ & Section \ref{S:GeneralSphere} & $0<n<2g+2 $ & Section \ref{S:GeneralSurface}\\ \hline
\end{tabular}
\end{table}


\section{Preliminaries: geodesic flows on hyperbolic 2-orbifolds}\label{S:DefGeodesic}

Let us explain some basic facts about hyperbolic surfaces, hyperbolic 2-orbifolds, and their geodesic flows. 
We refer the reader to the books \cite{Singer_Thorpe} and \cite{SKatok_FuchsianGroups} for general definitions on Riemannian surfaces, hyperbolic geometry and 2-orbifolds, and to the book~\cite{Katok_Hasselblatt} for the construction of the geodesic flow on the Poincar\'e disk.  

Let $\HH^2$ denote the hyperbolic plane, which is a simply connected Riemannian surface of constant curvature~$-1$. 
For $r>0$ and $a\in\HH^2$, let $\DD(a,r)$ denote the open disk of radius $r$ centered at~$a$. 
For every $p\ge 2$, denote by~$R_p$ the rotation with center~$a$ and angle~$2\pi/p$, and consider the quotient space $\DD_p(a,r):=\DD(a,r)/\langle R_p\rangle$. 
Then $\DD_p(a,r)$ is topologically a disk and the quotient map $\DD(a,r)\to \DD_p(a,r)$ induces a Riemannian structure on $\DD_p(a,r)\setminus\{a\}$, since $R_p$ is an isometry on $\DD(a,r)$. 
Since $p\ge 2$, this structure cannot be extended onto the point $a$ as a Riemannian metric, because the total angle around~$a$ equals $2\pi /p$, which is strictly smaller than $2\pi$. 

\begin{definition}[orientable hyperbolic 2-orbifold]\ 
\begin{enumerate}
\item A \textbf{hyperbolic surface} is a Riemannian surface satisfying that every point has an open neighborhood isometric to $\DD(r)$ in the Poincar\'e disk. Such a Riemannian metric is called a \term{hyperbolic metric} and is characterized by the fact of having constant curvature~$-1$.

\item An \textbf{orientable hyperbolic 2-orbifold} is an orientable surface endowed with a hyperbolic metric in the complement of a set of isolated points, and around each of these points there is a disk of radius $r>0$ which is isometric to some $\DD_p(r)$ with $p\geq 2$. 
Such a point is called a \term{conic point}, and the associated integer~$p$ is called its~\term{order}. 
\end{enumerate}
\end{definition}

Thus, an orientable hyperbolic $2$-orbifold~$\OO$ is a surface~$\Sigma_\OO$ carrying a hyperbolic metric defined everywhere except at some isolated points, where the metric structure is locally obtained from the quotient of a finite group of isometries acting on a hyperbolic disk. 
In the case~$\Sigma_\OO$ is closed, there are finitely many of these conic points. 
Denoting by $g$ the genus of~$\Sigma_\OO$ and by $p_1, \dots, p_n$ the orders of the conic points, the \term{type} of~$\OO$ is then the tuple $(g; p_1, \dots, p_n)$. 

Due to the {Uniformization Theorem} of hyperbolic geometry, every orientable hyperbolic surface is isometric to the quotient $\HH^2/\Gamma$ of hyperbolic plane $\HH^2$ by a free and properly discontinuous action of a subgroup $\Gamma$ of~$\mathrm{Isom}^+(\HH^2)$. 
The uniformization theorem extends to the family of hyperbolic 2-orbifolds\footnote{In Thurston's terminology, this means that all hyperbolic 2-orbifolds are~{\it good}.}.

\begin{theorem}[see e.g. \cite{SKatok_FuchsianGroups}]
Every orientable hyperbolic 2-orbifold $\OO $ is isometric to the quotient~$\HH^2/\Gamma$, where $\Gamma$ is a subgroup of orientation-preserving isometries of $\HH^2$ with properly discontinuous action and finite $\Gamma$-stabilizer on every point. 
\end{theorem}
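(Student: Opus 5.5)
This statement is the uniformization theorem for orientable hyperbolic 2-orbifolds. The plan is to build a developing map into $\HH^2$ and show that it is a covering with respect to the natural orbifold structure. Let $\OO$ be an orientable hyperbolic 2-orbifold with underlying surface $\Sigma_\OO$ and conic points $a_1,\dots,a_n$ of orders $p_1,\dots,p_n$.

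First, I would construct the orbifold universal cover. Around each conic point $a_i$ there is a chart $\DD(a_i,r)\to\DD_{p_i}(a_i,r)$. Consider the surface $\Sigma^*=\Sigma_\OO\setminus\{a_1,\dots,a_n\}$ and its fundamental group, and let $N$ be the normal subgroup generated by $c_i^{p_i}$, where $c_i$ is a small loop around $a_i$. The covering $\widetilde{\Sigma^*}/N\to\Sigma^*$ has the following local behaviour near each $a_i$: every preimage of the punctured disk is a punctured disk mapping $p_i$-to-one, because the image of $c_i$ in $\pi_1(\Sigma^*)/N$ has order exactly $p_i$. Establishing that exact order is the delicate point, and I address it below.

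Given that, I would fill in the punctures to obtain a surface $X$. The hyperbolic metric on $\Sigma^*$ lifts to $X$, and it extends smoothly over the filled points, since locally the lift is precisely the chart $\DD(a_i,r)$ with its genuine hyperbolic metric. The deck group $\Gamma_0=\pi_1(\Sigma^*)/N$ acts on $X$ by isometries, with quotient $\OO$ and stabilizers of the filled points cyclic of order $p_i$.

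Second, I would use compactness. Since $\Sigma_\OO$ is closed, the lifted metric on $X$ is complete; more generally one would assume completeness of $\OO$. So $X$ is a complete, simply connected hyperbolic surface. It is simply connected because filling the punctures kills the loops $c_i^{p_i}$, and these are the only obstructions. By the classical uniformization of surfaces, the developing map $X\to\HH^2$ is then an isometry. Transporting $\Gamma_0$ gives a discrete subgroup $\Gamma\subset\mathrm{Isom}^+(\HH^2)$ that acts properly discontinuously with finite stabilizers, and $\OO\cong\HH^2/\Gamma$.

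The main obstacle is showing that $c_i$ has order exactly $p_i$ in $\Gamma_0$, not a proper divisor of $p_i$. This is precisely where goodness can fail for bad orbifolds such as teardrops. I would handle it geometrically, avoiding group theory, by running the construction in the other direction: develop the hyperbolic structure of $\Sigma^*$ to obtain a holonomy $\rho:\pi_1(\Sigma^*)\to\mathrm{PSL}_2(\RR)$. The local model forces $\rho(c_i)$ to be an elliptic rotation by $2\pi/p_i$, which has order exactly $p_i$. Hence $N\subset\ker\rho$, and $c_i$ has order at least $p_i$ in the quotient. Combined with $c_i^{p_i}\in N$, this gives order exactly $p_i$. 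Properness and discreteness of $\Gamma$ then follow from the developing map being a covering, which in turn follows from completeness. The rest is routine; alternatively, one can simply cite \cite{SKatok_FuchsianGroups} or \cite{ThurstonNotes}.
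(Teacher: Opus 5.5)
Your argument is correct. The paper gives no proof of this statement: it imports it from the Fuchsian-group literature (Katok's book) and uses it to set up $\U\OO=\U\HH^2/\Gamma$ and the proof of Gromov's theorem.

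What you give is the standard proof that complete hyperbolic 2-orbifolds are good:
\begin{enumerate}
\item You realise the orbifold universal cover as the completion $X$ of the cover of $\Sigma^*$ associated with the normal closure $N$ of the $c_i^{p_i}$.
\item You use the holonomy $\rho$ of the hyperbolic structure on $\Sigma^*$ to show that $c_i$ has order exactly $p_i$ in $\pi_1(\Sigma^*)/N$. The inclusion $N\subset\ker\rho$ is exactly the right way to rule out teardrop-type behaviour.
\item You conclude with the Killing--Hopf theorem.
\end{enumerate}

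A more combinatorial route, closer to the cited reference and to the paper's own constructions, would go through fundamental polygons. You would cut $\OO$ along geodesic arcs into a polygon, develop it in $\HH^2$, and apply Poincar\'e's polygon theorem to the side pairings. The angle sums $2\pi/p_i$ at the cone points are then exactly the cycle conditions. For the orbifolds the paper actually uses (glued from $A,B,C,D$ or from $E,F$), this produces $\Gamma$ at once, together with a presentation. In general, however, it requires choosing the cuts so that the developed polygon is embedded. Your intrinsic argument needs no such choice and applies verbatim to every complete hyperbolic 2-orbifold.

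Two minor points to tighten when you write it up:
\begin{enumerate}
\item Proper discontinuity of $\Gamma$ comes from $\Gamma_0$ being the deck group of a branched covering. The preimage of each cone chart $\DD_{p_i}(a_i,r)$ is a disjoint union of discs permuted by $\Gamma_0$, with stabilisers of order $p_i$. It does not come from the developing map being a covering; that fact is what gives $X\cong\HH^2$.
\item Completeness of $X$ should be justified in a line. Compactness of $\OO$ gives an $\epsilon>0$ such that every closed $\epsilon$-ball of $\OO$ lies in a compact chart, so closed $\epsilon$-balls in $X$ are compact.
\end{enumerate}

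Finally, you are right that completeness must be assumed. As literally stated, the theorem fails for an open disc in $\HH^2$, and the paper tacitly restricts to closed orbifolds.
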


Given a closed orientable hyperbolic 2-orbifold $\OO$ (including the case of a hyperbolic surface if there is no conic point) there is an associated \term{unitary tangent bundle} and \term{geodesic flow} acting on it. 
The construction is the following: let $\widetilde{\Phi}^t_{\mathrm{geod}}:\U\HH^2\to \U\HH^2$ be the geodesic flow on the unitary tangent bundle of the hyperbolic plane, which is a circle bundle $S^1\hookrightarrow \U\HH^2\to\HH^2$. 
Since $\Gamma$ acts on $\HH^2$ by isometries then it also acts on $\U\HH^2$ through the derivative of its elements, preserving the $S^1$-fibration. 
On $\U\HH^2$ this action is free, i.e. every point has trivial stabilizer. 
To see this, observe that along the $S^1$-fiber of every $z\in\HH^2$ with non-trivial $\Gamma$-stabilizer, the action is a finite group of rotations, so there is no fixed point on these fibers. 
Also since $\Gamma$ acts by isometries on the Poincar\'e disk then the geodesic flow is $\Gamma$-equivariant. One thus sets:

\begin{definition}[geodesic flow]\ For $\OO $ an orientable hyperbolic 2-orbifold identified with~$\HH^2/\Gamma$, 
\begin{itemize}
\item its \textbf{unitary tangent bundle} is the 3-manifold $\U\OO :=\U\HH^2/\Gamma$; 
\item it carries a flow $\fgeod^t:\U\OO \to \U\OO $ induced from $\widetilde{\Phi}^t_{\mathrm{geod}}$ and called the \textbf{geodesic flow}.
\end{itemize}
\end{definition}

It is worth noting that since the action of $\Gamma$ on $\U\HH^2$ through the derivative of its elements preserves the $S^1$-fibers then the quotient $\U\OO $ is endowed with a foliation by circles. 
Nevertheless, this foliation needs not to be a circle bundle as in the case of a hyperbolic surface. 
It fails to be locally trivial around the fibers corresponding to the conic points. 
This type of fibration is called \emph{Seifert fibration}. See the notes~\cite{Hatcher_Seifert} for a survey on Seifert manifolds.\footnote{Note that, if $\OO $ is a 2-orbifold with non-empty set of conic points, then its unitary tangent bundle as a 2-orbifold is not homeomorphic to the unitary tangent bundle of the underlying topological surface $\Sigma_\OO$. In particular the foliation by fibers on $\U\OO$ is a Seifert fibration and not a fibration by circles.} 

Note that the orbits of the geodesic flow~$\fgeod$ correspond to lifts of oriented geodesics. 

\begin{notation}[lifts of geodesic arcs]
On an orientable 2-orbifold~$\OO$, for $\alpha$ an oriented arc of geodesic, we denote by~$\vec\alpha$ its oriented lift in~$\U\OO$. 
We denote by~$\revec\alpha$ the lift of~$\alpha$ with the opposite orientation, and by~$\vrevec\alpha$ the union~$\vec\alpha\cup\revec\alpha$. 
\end{notation}

Remark the following dichotomy: if $p$ is a conic point of odd order, then the geodesic arcs go through $p$; on the other hand if $p$ is a conic point of even order, then the geodesic arcs make a U-turn at~$p$. 
In that case, one has~$\vec\alpha=\revec\alpha=\vrevec\alpha$. 

\medskip

Gromov remarked that given two hyperbolic surfaces of the same genus, the associated geodesic flows on the unitary tangent bundles are equivalent. 
The statement extends to 2-dimensional orbifolds, with the same proof:

\begin{theorem}\cite{Gromov}\label{Gro}
Given two compact hyperbolic orientable 2-orbifolds $\OO_1, \OO_2$ of the same type, then there exists a homeomorphism $\U\OO_1\to\U\OO_2$ that sends the oriented orbits of the geodesic flow on $\U\OO_1$ onto the oriented orbits of the geodesic flow on $\U\OO_2$.
\end{theorem}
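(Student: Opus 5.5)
The plan is to follow Gromov's argument for surfaces, working equivariantly in the universal cover $\Hy$, and to check that conic points do not interfere. Write $\OO_i=\Hy/\Gamma_i$ for $i=1,2$. Since $\OO_1$ and $\OO_2$ have the same type $(g;p_1,\dots,p_n)$, there is an orientation-preserving homeomorphism $F:\Sigma_{\OO_1}\to\Sigma_{\OO_2}$ sending the conic points to conic points of the same order. One can also arrange $F$ to be an isometry in small cone neighbourhoods $\DD_{p_i}(r)$, by choosing $r$ small and interpolating outside. Such an $F$ is an orbifold homeomorphism. Lifting it through the orbifold universal coverings $\Hy\to\OO_i$ then gives:
\begin{itemize}
\item a group isomorphism $\rho:\Gamma_1\to\Gamma_2$;
\item a $\rho$-equivariant homeomorphism $\tilde F:\Hy\to\Hy$.
\end{itemize}
Since $\tilde F$ is equivariant and the actions are cocompact, the \v{S}varc--Milnor argument shows that $\tilde F$ is a quasi-isometry. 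It therefore extends to a $\rho$-equivariant, orientation-preserving homeomorphism $h:\bord\Hy\to\bord\Hy$.

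Next, recall that the orbits of $\widetilde{\Phi}_{\mathrm{geod}}$ on $\U\Hy$ are in natural bijection with ordered pairs $(\xi^-,\xi^+)$ of distinct points of $\bord\Hy$. The map $(\xi^-,\xi^+)\mapsto(h(\xi^-),h(\xi^+))$ therefore sends oriented orbits to oriented orbits, equivariantly with respect to $\rho$. Since $h$ is a homeomorphism, the direction of each geodesic is preserved. It remains to choose, continuously and equivariantly, an orientation-preserving parametrisation from each orbit to its image orbit.

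For this, take $v$ tangent to the geodesic $\gamma=(\xi^-,\xi^+)$ with base point $x$. Let $\pi(v)$ be the orthogonal projection of $\tilde F(x)$ onto the geodesic $h(\gamma)$. Morse's lemma says that $\tilde F(\gamma)$ stays at bounded distance from $h(\gamma)$. Hence, along each orbit, the map $t\mapsto\pi(\widetilde{\Phi}^t_{\mathrm{geod}}v)$ is a quasi-isometry $\RR\to\RR$ that is coarsely increasing, uniformly in $v$. It is also continuous and $\rho$-equivariant, but it need not be monotone. To fix this, replace it by the average
\[\Pi(v)=\frac1T\int_0^T\pi(\widetilde{\Phi}^s_{\mathrm{geod}}v)\,ds,\]
measured in arclength along $h(\gamma)$. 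For $T$ large enough compared with the quasi-isometry constants, the derivative of $\Pi$ along the flow equals $(\pi(\text{end})-\pi(\text{start}))/T$, and this is positive. So $\Pi$ is strictly increasing and proper on each orbit, hence a homeomorphism from each orbit onto its image orbit. Continuity in $v$ and equivariance are preserved by the averaging.

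The resulting map $\U\Hy\to\U\Hy$ is a bijection, continuous, and equivariant. It is proper, so it is a homeomorphism, and it descends to the required map $\U\OO_1=\U\Hy/\Gamma_1\to\U\Hy/\Gamma_2=\U\OO_2$. I expect the main obstacle to be the first step in the orbifold setting. There, one needs the existence of the type-preserving homeomorphism $F$, which holds because the type determines $\Sigma_\OO$ and the cone data, and one needs that $F$ lifts to an equivariant map despite the finite stabilisers. The cleanest way to get the lift is to use the standard presentation
\[\Gamma=\langle a_j,b_j,c_i \mid c_i^{p_i},\ \textstyle\prod[a_j,b_j]\prod c_i\rangle,\]
which depends only on the type and produces $\rho$ directly. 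One then builds $\tilde F$ as an equivariant map between fundamental polygons with matched side pairings and cone vertices. The rest is the usual Gromov argument verbatim.
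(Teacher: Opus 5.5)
Your proof is correct, but it reaches the map along orbits by a different route than the paper. The common core is the same: an isomorphism $\Gamma_1\to\Gamma_2$, its extension to an equivariant homeomorphism $h$ of $\bord\Hy$, and the correspondence between oriented orbits on $\U\Hy$ and ordered pairs of distinct boundary points.

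\textbf{The paper's route.} It never reparametrises orbits. It identifies $\U\Hy$ with the space $C_3(\bord\Hy)$ of positively oriented triples of distinct boundary points: the triple $(\xi^-,\xi^+,\eta)$ encodes the vector tangent to the geodesic $(\xi^-,\xi^+)$ at the foot of the perpendicular from $\eta$. Then $h\times h\times h$ is already an equivariant homeomorphism of $\U\Hy$. An orbit is the set of triples with fixed first two entries, and $h$ carries the arc of admissible third points monotonically onto the corresponding arc. So orbits and their orientations are preserved automatically, with no Morse lemma, no averaging and no properness check. The cost is that this trick is specific to dimension two. It also needs $h$ to preserve the cyclic orientation of $\bord\Hy$, so that positive triples go to positive triples. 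Your choice of an orientation-preserving $F$ secures this, whereas the paper's abstract isomorphism $f$ leaves it implicit.

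\textbf{Your route.} This is Gromov's original argument: \v{S}varc--Milnor, the Morse lemma, then averaging along flow lines to turn a coarsely increasing continuous map into a strictly increasing one. It is heavier, but it uses no special coordinate on $\U\Hy$, so it carries over to variable negative curvature and higher dimensions. You also make explicit why equal type yields $\Gamma_1\cong\Gamma_2$, which the paper only asserts.

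\textbf{One step to justify.} You state without proof the uniform coarse monotonicity of $t\mapsto\pi(\widetilde{\Phi}^t_{\mathrm{geod}}v)$. It follows from three facts together: the two-sided coarse bi-Lipschitz bound, continuity in $t$, and the intermediate value theorem, using that $\pi$ tends to $h(\xi^\pm)$ as $t\to\pm\infty$.
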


\begin{proof}
Since $\OO_1$ and $\OO_2$ are hyperbolic, their universal cover is~$\Hy$ and they are isometric to $\Hy/\Gamma_1$ and~$\Hy/\Gamma_2$ respectively. 
Since they are of the same type, there is an isomorphism $f : \Gamma_1 \to \Gamma_2$. 
Identifying $\partial \Hy$ with~$\partial \Gamma_1$ and $\partial\Gamma_2$, $f$ extends to a $(\Gamma_1, \Gamma_2)$-equivariant homeomorphism $\partial \Hy\to\partial\Hy$.

Now a geodesic on $\Hy$ is represented by a pair of distinct points on $\partial \Hy$ and a unitary tangent vector by a positively oriented triple of distinct points (the third point defines a unique canonical projection on the geodesics represented by the first two points). 
Denoting by $C_3(\partial\Hy)$ the set of triples of distinct points in~$\partial\Hy$, $f$ extends to a $(\Gamma_1, \Gamma_2)$-equivariant homeomorphism $C_3(\partial\Hy)\to C_3(\partial\Hy)$, that is, a homeomorphism~$\U\Hy\to\U\Hy$. 
Projecting on the first two coordinates one sees that it sends oriented geodesics onto oriented geodesics. 
Notice that since the third coordinate is not the time-parameter, the speed is not at all preserved. 
Projecting back to $\U\Hy/\Gamma_1=\U\OO_1$ and $\U\Hy/\Gamma_2=\U\OO_2$ , we obtain the desired orbital equivalence.
\end{proof}


\section{Vertical surfaces and two simple cases}\label{S:Vertical}

The proof of Theorem~\ref{T:G1BS} relies on the description of explicit surfaces in the considered unitary tangent bundles. 
The construction being rather involved, we first introduce one tool to construct and describe specific surfaces, that allows to prove two particular cases of the theorem, namely the case of a surface (that was already treated using this tool by Brunella~\cite{Brunella}) and the case of a sphere with at least $5$ conic points of order~2. 

\subsection{Vertical surfaces}\label{S:VerticalDef}

An arc~$\alpha$ in a 2-orbifold is said~\term{simple} if it is embedded and its interior does not meet any conic point. 

\begin{definition}(after~\cite{Brunella, CD})\label{D:Vertical}
Given a hyperbolic 2-orbifold~$\OO$, a geodesic simple arc~$\alpha$ in~$\OO$, and a choice~$s$ of a side of~$\alpha$ (or, equivalently, of a coorientation of~$\alpha$), the set of those unitary tangent vectors based at points of~$\alpha$ and pointing toward the side~$s$ is called a \term{vertical rectangle} in~$\U\OO$. It is denoted by~$R(\alpha, s)$, see Figure~\ref{F:Rec} left.
\end{definition}

The surface~$R(\alpha, s)$ of the above definition has the topology of a disc. 
Its interior is transverse to~$\fgeod$. 
Its boundary is made of four arcs : the two arcs~$\vrevec\alpha=\vec\alpha\cup\revec\alpha$ that we call \term{horizontal}, and two arcs that correspond to the parts of the fibers of the extremities of~$\alpha$ pointing toward the side~$s$ that we call~\term{vertical} (see Figure~\ref{F:Rec} left).

The coorientation of the interior of~$R(\alpha, s)$ by~$\fgeod$ induces an orientation of~$R(\alpha, s)$, which in turns induces an orientation of its boundary. 
One checks that this orientation of the horizontal boundary~$\vrevec\alpha$ is {\it opposite} to the orientation of these two arcs given by~$\fgeod$ (see Figure~\ref{F:Rec} left). 
Also, when looking toward~$s$, the vertical boundary in the fiber of the left extremity of~$\alpha$ is oriented upward while the boundary in the fiber of the right extremity goes downward.  

\medskip

Now assume that two geodesic simple arcs meet at a unique point~$p$ on an hyperbolic 2-orbifold~$\OO$, thus delimiting four arcs~$\alpha_1, \dots, \alpha_4$ with a common extremity~$p$ (see Figure~\ref{F:Rec} center).
These arcs decompose a neighborhood of~$p$ into four local sectors that can be checkerboard-colored. 
For each arc~$\alpha_i$, choose the side~$s_i$ that corresponds to the white face. 
Then the four rectangles~$R(\alpha_1, s_1)\cup\dots\cup R(\alpha_4, s_4)$ glue into a topological surface~$S_p$ with boundary in~$\U\OO$, which also inherits a coorientation from~$\fgeod$, and hence an orientation. 
The boundary of~$S_p$ consists of the eight segments of orbits $\vrevec\alpha_1\cup\dots\cup\vrevec\alpha_4$ of~$\fgeod$, plus four vertical arcs in the fibers of the extremities of~$\alpha_1, \dots, \alpha_4$ that are not~$p$. 
Indeed, in the fiber~$\U p$ and taking into account the orientations, the vertical parts of the four rectangles two-by-two cancel. 
The surface is only topological since it is not smooth along the fiber of~$p$. 
However one can perform an arbitrarly small isotopy that fixes the horizontal boundary and makes the interior smooth and transverse to~$\fgeod$ (as in Figure~\ref{F:Rec} right). 
This smoothing procedure near the fiber of the double-point~$p$ will be used repeatedly in the sequel. 

\begin{figure}[ht]
\begin{picture}(130,65)(0,0)
\put(-5,5){\includegraphics[width=.4\textwidth]{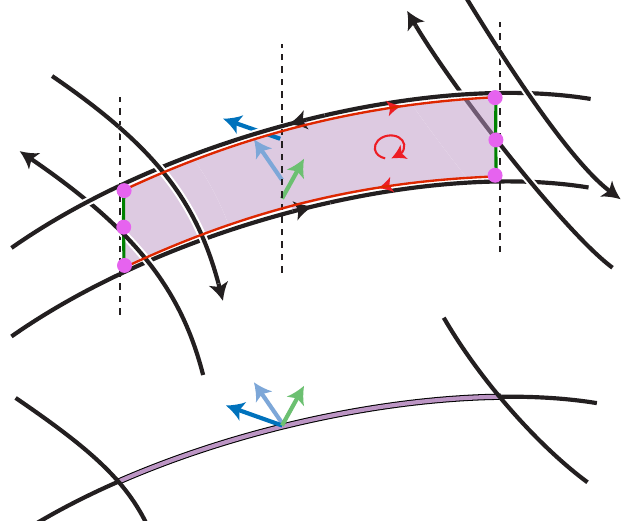}}
\put(65,0){\includegraphics[width=.42\textwidth]{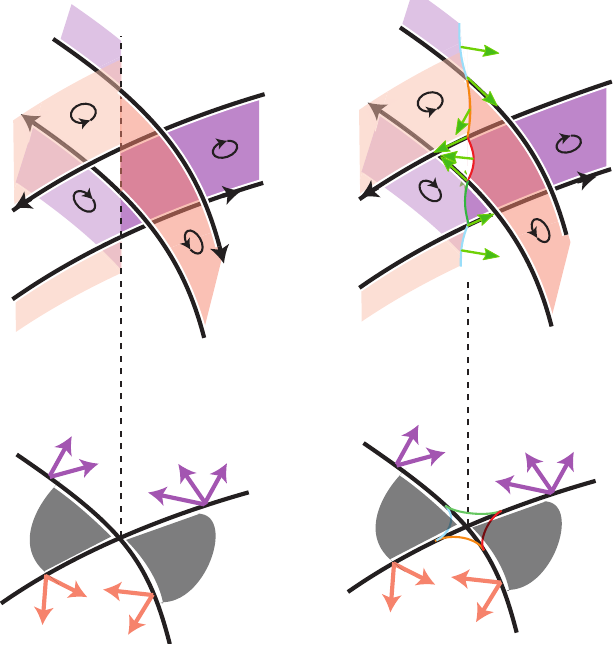}}
\put(23,47){$R(\alpha, s)$}
\put(20,19){$s$}
\put(28,13){$\alpha$}
\end{picture}
\caption{On the left, a geodesic simple arc~$\alpha$ on a 2-orbifold~$\OO$, a choice of a side~$s$, and the corresponding rectangle~$R(\alpha, s)$ in~$\U\OO$. 
In the center, the intersection of two geodesics on~$\OO$ forms a graph with four arcs adjacent to one vertex. 
Considering a (local) checkerboard coloring and the sides of the arcs determined by the white face, we obtain a (local) topological surface in~$\U\OO$ made of four rectangles. 
On the right, a smoothing of that surface that turns it into a smooth surface transverse to the geodesic flow. }
\label{F:Rec}
\end{figure}


\subsection{Proof of Theorem~\ref{T:G1BS} in the case of a surface~\cite[Section 1, Description~2]{Brunella}}\label{S:Surface}

\subsubsection{Choice of a suitable orbifold metric}\label{S:MetricSurface}
We assume~$g\ge 2$ for otherwise there is no hyperbolic surface of genus~$g$. 
Thanks to Gromov's theorem~\ref{Gro}, the metric is not relevant concerning the existence and the topology of Birkhoff sections for the geodesic flow, as long as it is hyperbolic. 
However, choosing a suitable hyperbolic metric will help in describing and picturing the construction.\footnote{In this first peculiar case, the choice of the metric is actually not relevant, but it will be useful later for proving more general cases.} 

The following construction is displayed in Figure~\ref{F:CasSurface}. 
Consider a regular right-angled $2g{+}2$-gon~$A$ in~$\Hy$, and denote its vertices by~$a_1, \dots, a_{2g+2}$. 
Consider the symmetry~$s_h$ with axis~$(a_1a_2)$, and set $B:=s_h(A)$.  
Also consider the symmetry~$s_v$ with axis~$(a_1a_{2g+2})$, and set $D:=s_v(A)$. 
Finally consider the composition $r:=s_{h}\circ s_{v}$ which is a $\pi$-rotation around~$a_1$, and set $C:=r(A)$

The polygon~$A\cup B\cup C\cup D$ is a $8g{-}4$-gon. 
Consider the identification of its sides given as follows (see left of Figure~\ref{F:CasSurface}): 
for every $i$ in~$\{0, \dots, g\}$ and counting mod $2g{+}2$, identify 
\begin{itemize}
\item $[a_{2i}a_{2i+1}]$ with $s_{v}([a_{2i}a_{2i+1}])$, 
\item $r([a_{2i}a_{2i+1}])$ with $s_{h}([a_{2i}a_{2i+1}])$, 
\item $[a_{2i+1}a_{2i+2}]$ with $s_{h}([a_{2i+1}a_{2i+2}])$, 
\item $r([a_{2i+1}a_{2i+2}])$ with $s_{v}([a_{2i_1}a_{2i+2}])$. 
\end{itemize}
In other words, edges of $A$ and $C$  are identified with their images under $s_{v}$ and $s_h$ in $B$ and $D$ alternatively. 

In the quotient, every vertex~$a_i$ is identified with $s_v(a_i), s_h(a_i)$ and $r(a_i)$, so that the quotient is a hyperbolic surface that we denote by~$\Sigma_g$. 
We keep the notation~$a_i$ for the projection of~$a_i$ in the quotient. 
Also we denote by $e_i$ the projection of the edge~$[a_ia_{i+1}]$ in the quotient and by $e'_i$ the projection of the edge~$[r(a_i)r(a_{i+1})]$. 
In this way the edges of~$A$ are $e_1, e_2, \dots, e_{2g+2}$ in trigonometric order, the edges of~$B$ are $e_1, e'_2, e_3, \dots, e'_{2g+2}$ in clockwise order, the edges of~$C$ are $e'_1, e'_2, \dots, e'_{2g+2}$ in trigonometric order, and the edges of~$D$ are~$e'_1, e_2, e'_3, \dots, e_{2g+2}$ in clockwise order. 

The faces, sides and vertices of~$A\cup B\cup C\cup D$ then induce a graph~$G_g$ on~$\Sigma_g$ with~$4$ faces (that correspond to $A$, $B$, $C$, and $D$), $4g{+}4$ edges (namely $e_1, e'_1, \dots, e_{2g+2}, e'_{2g+2}$), and $2g{+}2$ vertices, so that $\Sigma_g$ has genus~$g$. 
Picturing~$\Sigma_g$ as on the right of Figure~\ref{F:CasSurface}, the two faces~$A$ and $B$ lie on the top, and~$C$ and $D$ lie on the bottom. 
Also~$B$ and $C$ lie on the front, and~$A$ and $D$ lie on the back. 

\begin{figure}[ht]
\begin{picture}(150,55)(0,0)
\put(0,0){\includegraphics[width=.36\textwidth]{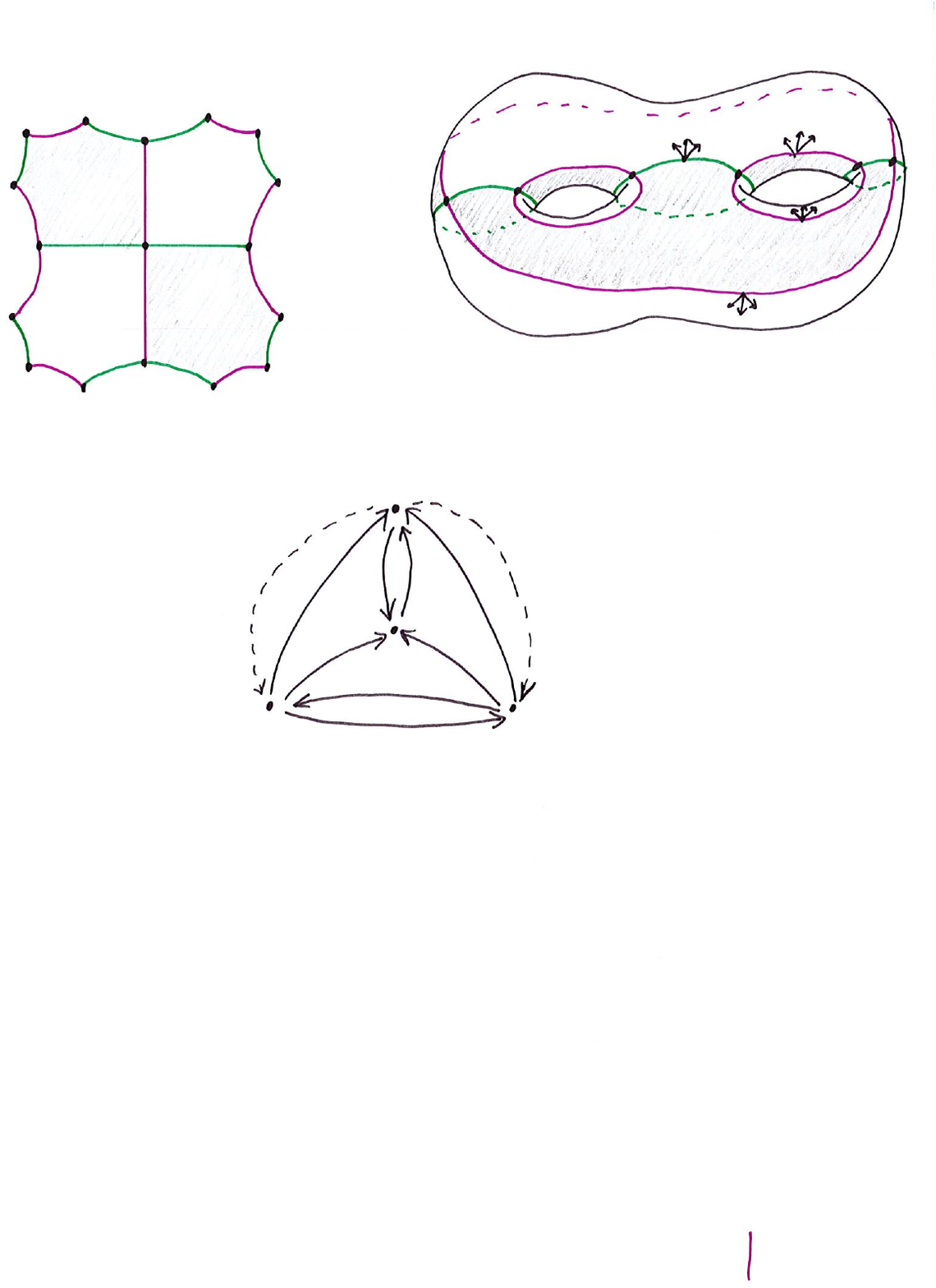}}
\put(60,0){\includegraphics[width=.62\textwidth]{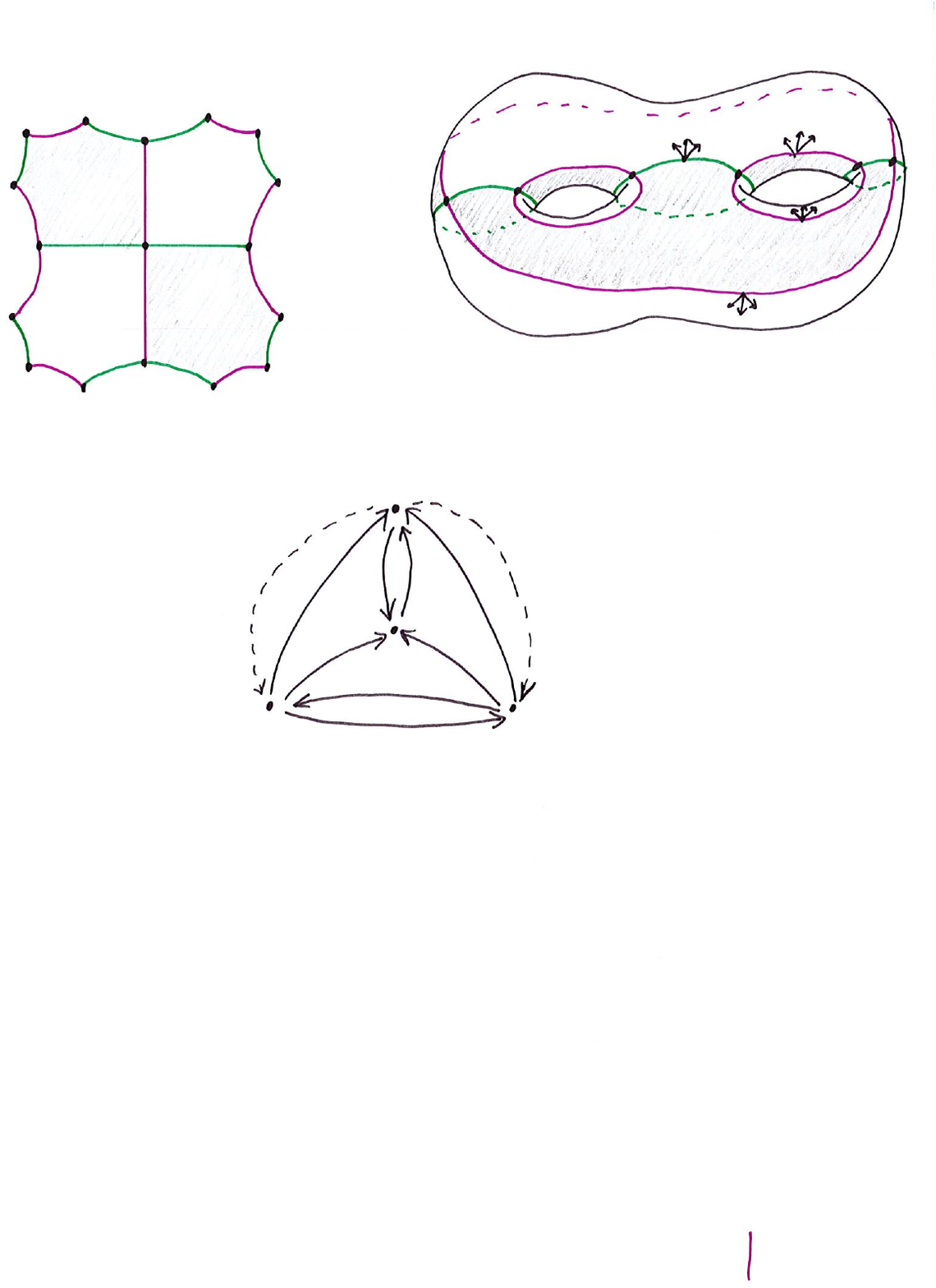}}
\put(38,38){$A$}
\put(38,12){$B$}
\put(12,12){$C$}
\put(12,38){$D$}
\put(27.5,29){$a_1$}
\put(46.5,27){$a_2$}
\put(-3.5,27){$s_v(a_2)$}
\put(23,48.5){$a_{2g+2}$}
\put(21,2){$s_h(a_{2g+2})$}
\put(52,38){$a_3$}
\put(50,16){$s_h(a_3)$}
\put(-6,15){$r(a_3)$}
\put(-8,38){$s_v(a_3)$}
\put(37,29){$e_1$}
\put(16,24){$e'_1$}
\put(44,34){$e_2$}
\put(7,17){$e'_2$}
\put(87,38){$A$}
\put(107,17){$B$}
\put(77,5){$C$}
\put(64.5,29){$a_1$}
\put(75,31){$a_2$}
\put(152,33){$a_{2g+2}$}
\put(70,27){$\alpha_1$}
\put(87,20){$\alpha_2$}
\put(105,32){$\alpha_3$}
\put(88,8){$\alpha_{2g+2}$}
\end{picture}
\caption{On the left, the $2g{+}2$-gon~$A$, and its images~$B, D, C$ under the symmetries~$s_h$ and~$s_v$ and the $\pi$-rotation~$r$. 
On the right, the hyperbolic surface~$\Sigma_g$ obtained after identifications of the sides of $A, B, C, D$, with the $2g{+}2$ closed geodesics~$\alpha_1, \dots, \alpha_{2g+2}$ on it. 
These curves yield a graph~$G_g$ on~$\Sigma_g$. 
}
\label{F:CasSurface}
\end{figure}

\subsubsection{Choice of the boundary orbits}\label{S:BoundarySurface}
On the surface~$\Sigma_g$, for every $i$ in $\{1, \dots, 2g{+}2\}$, note that $e_i$ and $e'_i$ meet in $a_i$ and $a_{i+1}$ with an angle~$\pi$, so that their concatenation is a closed geodesic on~$\Sigma_g$, that we denote by~$\alpha_i$. 
Consider the link~$L_g:=\vrevec\alpha_1\cup\dots\cup\vrevec\alpha_{2g{+}2}$. 

\subsubsection{Choice of the surface}\label{S:SurfaceSurface}
Color the faces~$A$ and $C$ in white, and $B$ and $D$ in black. 
This coloring is checkerboard-like around every edge and vertex of~$G_g$. 
For each edge of type~$e_i$ or~$e'_i$, choose the side given by the white face and denote it by~$s_i$ or $s'_i$ respectively. 
Now consider the surface~$S^\times_g:=R(e_1, s_1)\cup R(e'_1, s'_1)\cup\dots\cup R(e'_{2g+2}, s'_{2g+2})$ made of~$4g{+}4$ vertical rectangles, each cooriented, hence oriented, by the geodesic flow. 
The horizontal boundary of~$S^\times_g$ then consists of the link~$L_g$ (with the orientation that is opposite to the geodesic flow, as explained in Section~\ref{S:VerticalDef} and on Figure~\ref{F:Rec} left). 
Indeed, all the vertical components of the boundaries of the rectangles cancel two-by-two. 

Around every vertex of~$a_1, \dots, a_{2g+2}$, there are four adjacent arcs. 
So around the fibers~$\U a_1, \dots, \U a_{2g+2}$, the surface~$S^\times_g$ is made of four rectangles. 
As explained at the end of the previous section and on the right of Figure~\ref{F:Rec}, one can smooth~$S^\times_g$ by a small isotopy supported in a neighborhood of the fibers of the vertices of~$G_g$ into a surface with the same boundary~$L_g$ and whose interior is still transverse to~$\fgeod$. 
Denote by~$S_g$ the resulting surface. 

\subsubsection{Computation of the genus}\label{S:GenusSurface}
The surfaces~$S^\times_g$ and~$S_g$ being isotopic, they have the same topology. 
We therefore only compute the Euler characteristic of~$S^\times_g$. 
The latter is made of the~$4g{+}4$ rectangles $R(e_1, s_1), R(e'_1, s'_1), \dots, R(e'_{2g+2}, s'_{2g+2})$ which are glued along their vertical boundaries. 
One must pay attention that for describing~$S^\times_g$ properly, one has to subdivide every vertical arc into two subarcs that are glued to two different adjacent rectangles, as in the center of Figure~\ref{F:Rec}. 

After this decomposition, every vertical rectangle of type~$R(e_i, s_i)$ or $R(e'_i, s'_i)$ is a hexagon with $2$ horizontal sides and $4$ vertical sides. 
Every horizontal side remains on the boundary, so it contributes by~$-1$ to the Euler characteristic. 
Every vertical side is identified with one other vertical side (in green on Figure~\ref{F:Rec}), so it only contributes by~$-\frac12$ to the Euler characteristic. 
Finally every vertex (in pink on Figure~\ref{F:Rec}) is identified with two other vertices, so it contributes by~$+\frac13$ to the Euler characteristic. 
All-in-all, the total contribution of~$R(e_i, s_i)$ to~$\chi(S^\times_g)$ is $+1-2\cdot 1-4\cdot \frac12+6\cdot\frac13=-1$. 

When summing over all rectangles, we find that $\chi(S^\times_g)$ equals~$-4g{-}4$. 
Since $S^\times_g$ has $4g{+}4$ boundary components, its genus is~$1$.  

\medskip

\begin{remark}\label{R:Genus}
As finding Birkhoff section with genus exactly one is the hard part, we provide an alternative way of computing the genera of our constructions. 
It relies on Poincaré-Hopf formula and the fact that the surfaces we construct are transverse in their interior to the geodesic flows~$\fgeod$. 
The latter being of Anosov type, it admits invariant foliations~$\Fs, \Fu$ (actually only one is needed for us). 
The intersection~$\Fs\cap S_g$ is then a foliation on~$S_g$, which is regular in the interior of~$S_g$. 
However it has singularties on~$\partial S_g$: these arise when the tangent plane~$\mathrm{T}S_g$ coincide with the tangent plane~$\mathrm{T}\Fs$. 
In that case, the foliation~$\Fs\cap S_g$ has a singularity of index~$-1/2$. 
Notice that, if our surfaces are Birkhoff sections, every boundary component needs to have a positive number of singularities (as $\mathrm{T}S_g$ needs to intersect~$\Fs$ for the first-return time to be bounded), and this number is actually even because of orientability. 
Now, by the Poincaré-Hopf formula, the Euler characteristic of~$S_g$ is the sum of all the indices of the singularities of~$\Fs\cap S_g$. 
Every boundary component has a negative integral contribution. 
Therefore~$S_g$ has genus one if and only if every boundary components contributes by~$-1$. 

In order to compute the contribution of a given boundary orbit, it may be easier to compute how~$\mathrm{T}S_g$ intersects the fiber-direction (which yields the same framing as~$\Fs$). 
In the present situation, one checks that $S_g$ is tangent to the vertical around~$\vec\alpha_i$ exactly once at each intersection points of~$\alpha_i$, that is in~$\U a_i$ and $\U a_{i+1}$. 
Therefore the contribution of ~$\vec\alpha_i$ to~$\chi(S_g)$ is exactly $-1$. 
Hence $S_g$ is indeed a torus. 
\end{remark}

\subsubsection{Intersection with the orbits of the geodesic flow}\label{S:IntersectionSurface}
One must check that every orbit of the geodesic flow~$\fgeod$ intersects the surface~$S_g^\times$ in bounded time. 

Consider the following oriented graph~$G^*_g$ which is an oriented dual of~$G_g$, see Figure~\ref{F:GrapheSurface}: 
it has four vertices labeled~$A^*, B^*, C^*, D^*$;
for every edge~$e_i$ of~$G_g$, there are two edges $e^+_i$ and $e^-_i$ respectively corresponding to crossing $e_i$ from the black face to the white face or from the white face to the black face, and for every vertex~$a_i$ of~$G_g$, there are four edges $a_i^{A\to C}, a_i^{C\to A}, a_i^{B\to D}, a_i^{D\to B}$ corresponding to crossing~$a_i$ diagonally from one adjacent face to the opposite one. 
Some edges are dotted and some are plain, as shown on Figure~\ref{F:GrapheSurface}.

\begin{figure}[ht]
\begin{picture}(70,63)(0,0)
\put(0,0){\includegraphics[width=.5\textwidth]{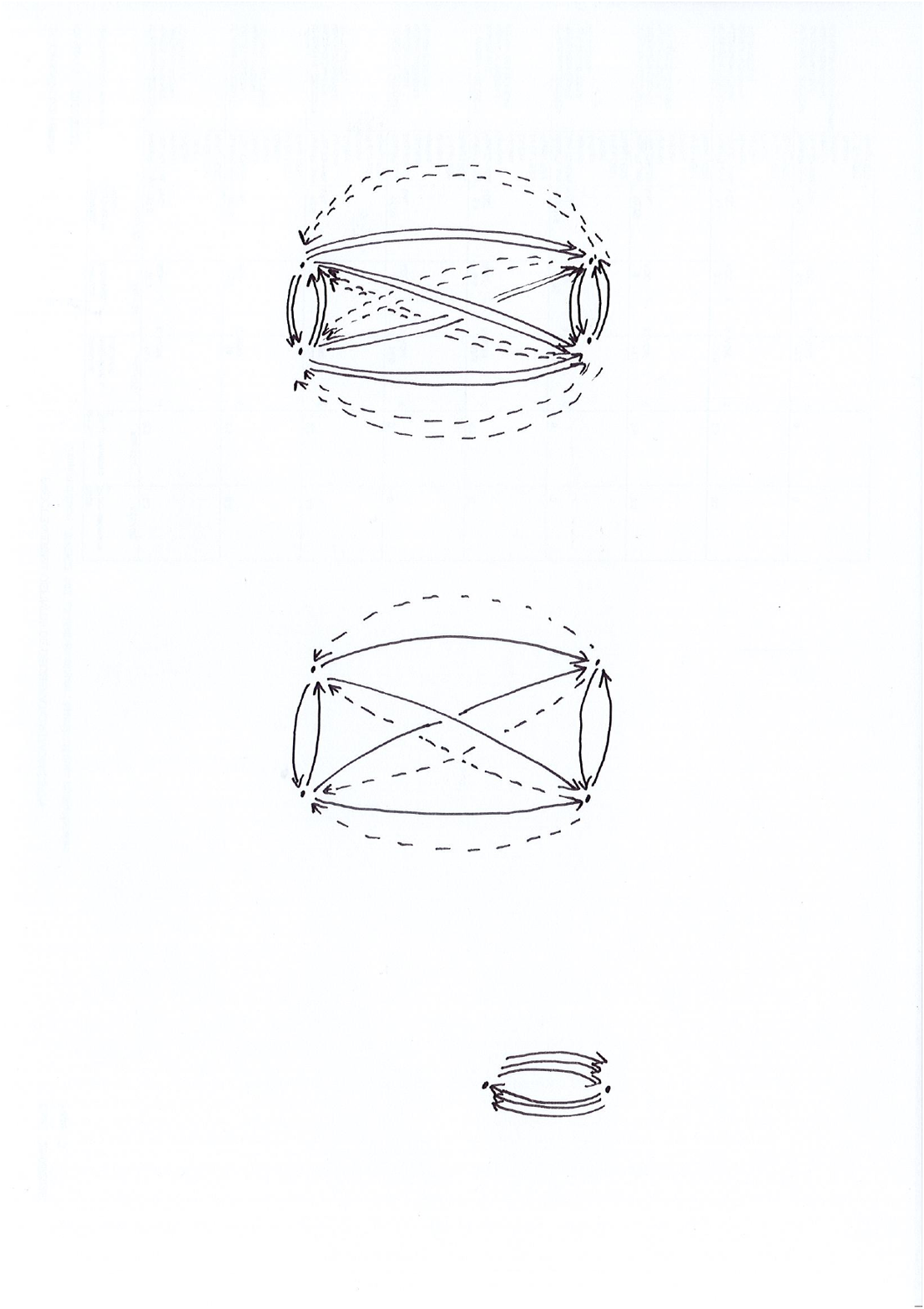}}
\put(74,42){$A^*$}
\put(70,19){$C^*$}
\put(-1,41){$B^*$}
\put(-2,17){$D^*$}
\put(35,50){$e_{2i+1}^+$}
\put(55,60){$e_{2i+1}^-$}
\put(55,32){$e_{2i}^+$}
\put(36,41){$e_{2i}^-$}
\put(35,10){$e_{2i+1}'^+$}
\put(12,3){$e_{2i+1}'^-$}
\put(18,40){$e_{2i}'^+$}
\put(37,20){$e_{2i}'^-$}
\put(75,29){$a_{i}^{C\to A}$}
\put(-10,29){$a_{i}^{B\to D}$}
\end{picture}
\caption{The dual graph~$G^*_g$. 
Every depicted edge has in fact many parallel copies with the same origin and destination. 
Given a geodesic~$\gamma$ on~$\Sigma_g$, the faces and edges visited by~$\gamma$ describe a path in~$G^*_g$. 
Every time this path uses a bold edge corresponds to an intersection point of the orbit~$\vec\alpha$ with the surface~$S_g$ in~$\U\Sigma_g$. 
}
\label{F:GrapheSurface}
\end{figure}

Consider an arbitrary oriented arc of geodesic~$\gamma$ on~$\Sigma_g$, and consider its lift~$\vec\gamma$ in~$\U\Sigma_g$ which is an arc of orbit of~$\fgeod$ (and every arc of orbit is of this form). 
To~$\gamma$ one associates a path~$\gamma^*$ in~$G^*_g$ obtained by following the list of faces visited by~$\gamma$ and the list of oriented edges (and vertices) crossed by~$\gamma$. 

Now, every time $\gamma$ goes from $B$ or $D$ to $A$ or $C$, its lift~$\vec\gamma$ intersects~$S_g$ positively. 
This is also the case every time~$\gamma$ goes through one vertex~$a_i$.
Therefore, every time the path~$\gamma^*$ goes along one bolded edge of~$G^*_g$ corresponds to one intersection  of~$\gamma$ with~$S_g$. 
Let $d_g$ denote the diameter of the polygon~$A$ in~$\Hy$. 
The key-point is that at least every second edge used by~$\gamma^*$ is bolded. 
Therefore, if $\gamma$ has length at least~$2d_g$, its lift~$\vec\gamma$ intersects~$S_g$ positively. 

This concludes the proof of Theorem~\ref{T:G1BS} in this case. 


\subsection{Proof of Theorem~\ref{T:G1BS} when $g=0$, $p_1=\dots=p_n=2$}\label{S:Order2}
Fix a positive integer~$n\ge 5$ as smaller values of~$n$ correspond to non-hyperbolic orbifolds.

\subsubsection{Choice of a suitable orbifold metric}\label{S:MetricOrder2}
As in the previous case, thanks to Gromov's theorem~\ref{Gro}, the metric is not relevant concerning the existence and the topology of Birkhoff sections for the geodesic flow, as long as it is hyperbolic. 
However, choosing a suitable hyperbolic metric will help in describing and picturing the construction.

Consider a right-angled regular $n$-gon $E$ in~$\Hy$, and denote by~$b_1, \dots, b_n$ its vertices. 
Consider the image~$F$ of $E$ under a reflection across~$(b_1b_2)$, and denote its vertices by $b_1, b_2, b'_3, \dots, b'_n$. 
Identify every side $[b_ib_{i+1}]$ of ~$E$ with the side  $[b'_ib'_{i+1}]$ of~$F$. 
The quotient of $E\cup F$ under these identifications is a hyperbolic orientable 2-orbifold that is a sphere with $n$ conic points of order 2, coming for the vertices~$b_1, \dots, b_n$. 
Denote it by~$\OO_{0; 2, \dots, 2}$. 
The polygons $E$ and $F$ induce two hemispheres on~$\OO_{0; 2, \dots, 2}$, while the union of the segments~$[b_1b_2], [b_2b_3],\dots, [b_nb_1]$ forms an equator. 

\begin{figure}[ht]
\begin{picture}(110,60)(0,0)
\put(0,0){\includegraphics[width=.27\textwidth]{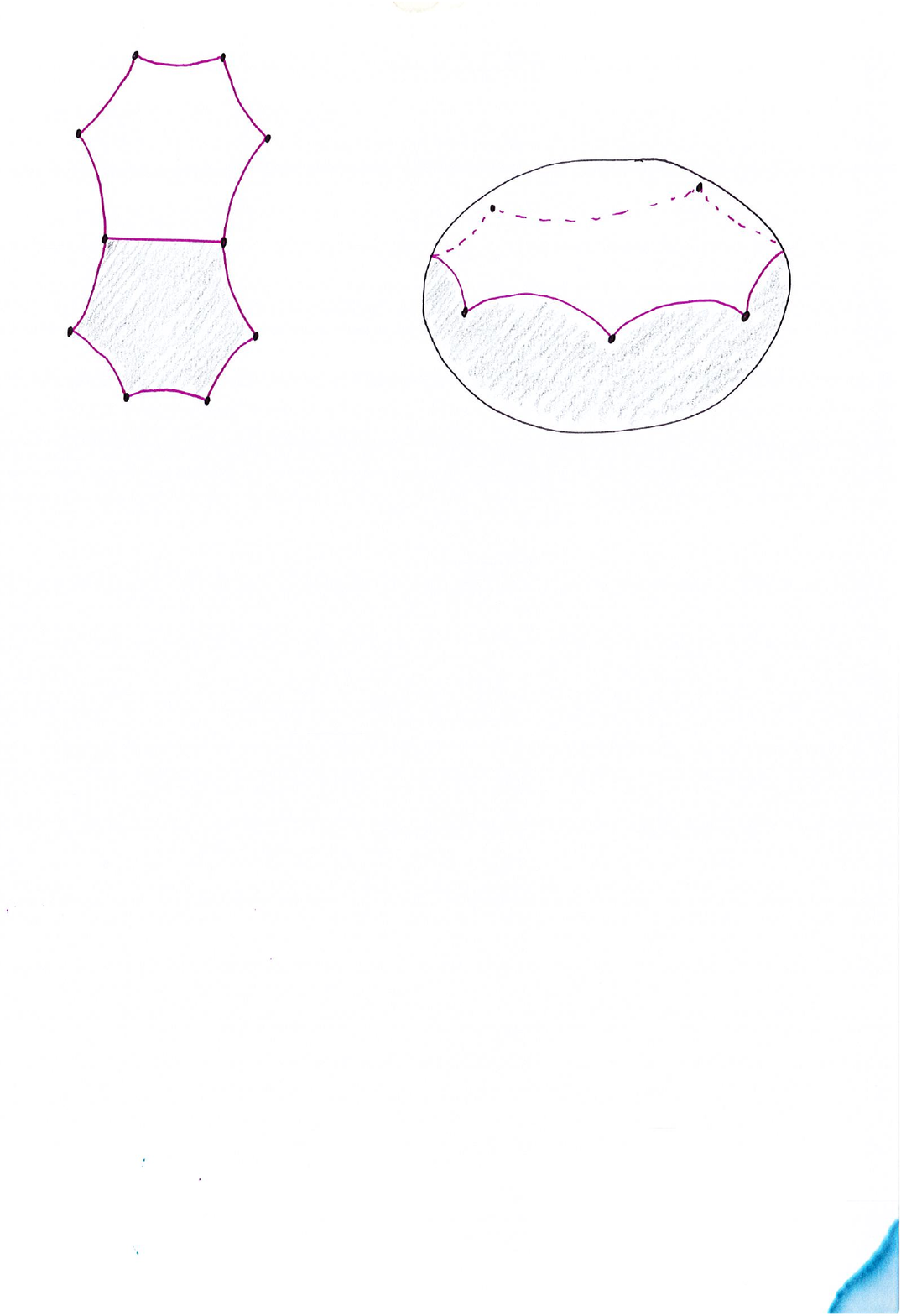}}
\put(50,5){\includegraphics[width=.42\textwidth]{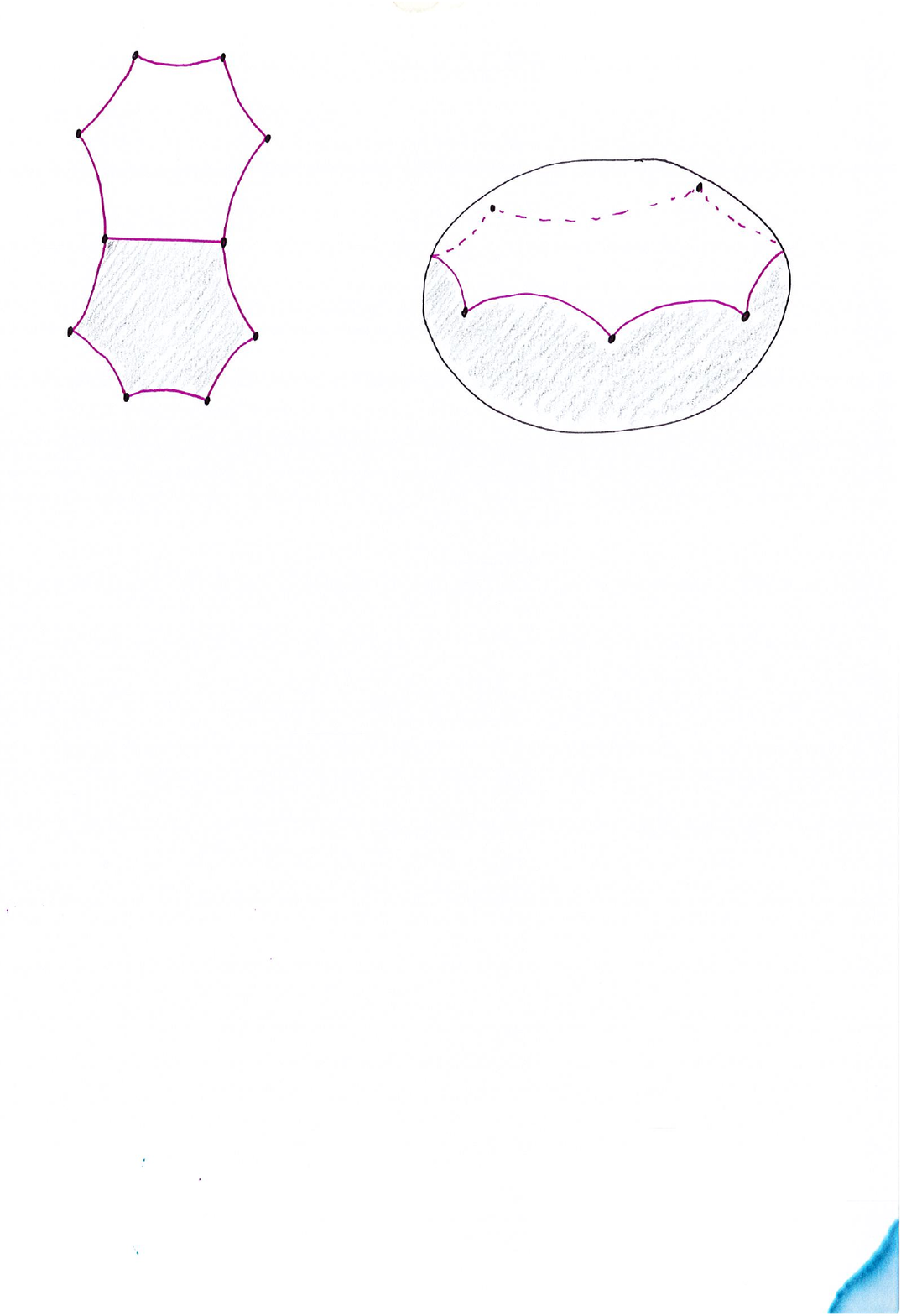}}
\put(19,38){$E$}
\put(18,14){$F$}
\put(6.5,26){$b_1$}
\put(30.5,26){$b_2$}
\put(38,42){$b_3$}
\put(2,43.3){$b_n$}
\put(36,11){$b'_3$}
\put(0,11){$b'_n$}
\put(81,36){$E$}
\put(72,14){$F$}
\put(82,19){$b_1$}
\put(103.5,23){$b_2$}
\put(58.5,23){$b_n$}
\put(91,30){$\beta_1$}
\put(102,34){$\beta_2$}
\put(70,31){$\beta_n$}
\end{picture}
\caption{The orbifold~$\OO_{0; 2, \dots, 2}$, with the $n$ conic points~$b_1, \dots, b_n$ of order~$2$ on the equator. 
}
\label{F:CasSphere}
\end{figure}

\subsubsection{Choice of the boundary orbits}\label{S:BoundaryOrder2}
For every~$i$, the edge~$[b_ib_{i+1}]$ (counting mod~$n$) connects two conic points of order~$2$. 
Therefore it forms a periodic geodesic on~$\OO_{0; 2, \dots, 2}$ that we denote by~$\beta_i$.
Its lift~$\vrevec \beta_i$ is a single periodic orbit of~$\fgeod$. 
Consider the $n$-component link~$L_{0;2, \dots, 2}:=\vrevec\beta_1\cup\dots\cup\vrevec\beta_n$. 

\subsubsection{Choice of the surface}\label{S:SurfaceOrder2}
Color $E$ in white and $F$ in black. 
Also, for every~$i$, choose $s_i$ to be the white side of~$\beta_i$. 
Consider the surface~$S_{0; 2, \dots, 2}^\times := R(\beta_1, s_1)\cup\dots\cup R(\beta_n, s_n)$. 

For every $i$, in a local degree-2 orbifold chart around~$b_i$, the arcs~$[b_{i-1}b_i]$ and~$[b_{i}b_{i+1}]$ lift to two intersecting geodesics, and the coloring lifts to a checkerboard coloring as at the bottom left of Figure~\ref{F:Order2}. 
Therefore the surface~$S_{0; 2, \dots, 2}^\times$ lifts to a surface similar to that at the top left of Figure~\ref{F:Order2}. 
One can then perform a smoothing isotopy that is invariant by the order-2 rotation around~$b_i$ and obtain a smooth surface~$S_{0; 2, \dots, 2}$ that is the quotient of the top left of Figure~\ref{F:Order2} by the order-2 rotation, see the result of the top right of Figure~\ref{F:Order2}.

\begin{figure}[ht]
\begin{picture}(100,80)(0,0)
\put(0,0){\includegraphics[width=.7\textwidth]{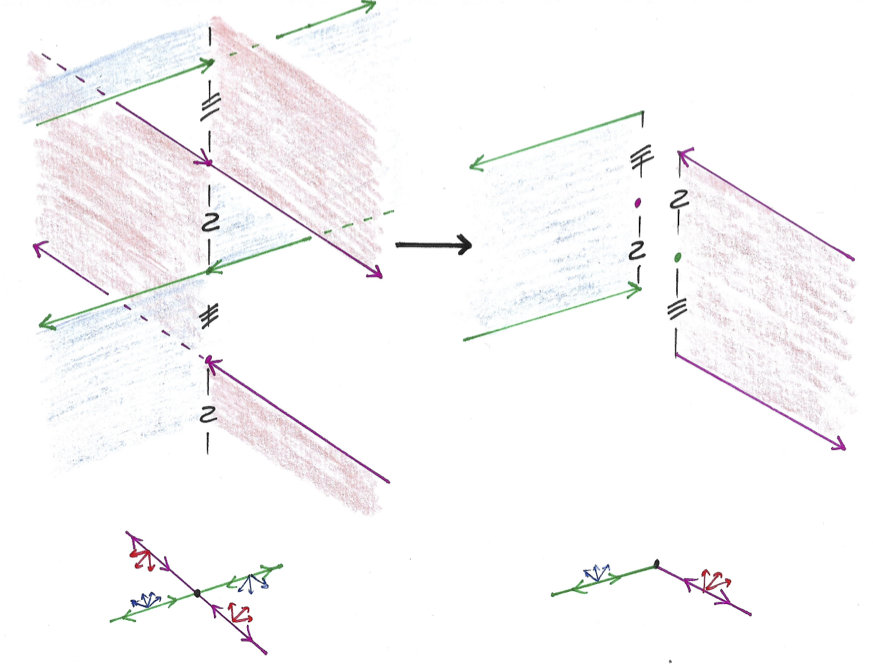}}
\put(78,14){$b_1$}
\put(22,11){$\tilde b_1$}
\put(48,52){$2:1$}
\end{picture}
\caption{In the case $p_{i-1}=p_i=p_{i+1}=2$, the surfaces~$R(\beta_{i-1}, s_{i-1})$ and $R(\beta_{i}, s_{i})$ around the fiber~$\U b_i$ (right) and in the degree-2 cover (left).}
\label{F:Order2}
\end{figure}

\subsubsection{Computation of the genus}\label{S:GenusOrder2}

The surfaces~$S_{0; 2, \dots, 2}^\times$ and $S_{0; 2, \dots, 2}$ being isotopic, we only compute the Euler characteristic of~$S_{0; 2, \dots, 2}^\times$. 
It is made of~$n$ rectangles, glued along their vertical boundaries. 
As in the previous case, one can subdivide the boundary of these rectangles so as to obtain hexagons whose sides and vertices are glued together. 
For every~$i$, the same computation as in Section~\ref{S:GenusSurface} shows that total contribution of~$R({\beta_i, s_i})$ is~$+1-2\cdot 1-4\cdot\frac12+6\cdot\frac13=-1$. 
Therefore one has~$\chi(S_{0; 2, \dots, 2}^\times)=-n$. 
Since it has $n$ boundary components, $S_{0; 2, \dots, 2}^\times)$ has genus~$1$, and so does $S_{0; 2, \dots, 2}$. 

\subsubsection{Intersection with orbits of the geodesic flow}\label{S:IntersectionOrder2}
Consider an oriented arc of geodesic~$\gamma$ on~$\OO_{0; 2, \dots, 2}$ and its lift~$\vec\gamma$ in~$\U\OO_{0; 2, \dots, 2}$. 
Every time $\gamma$ goes from $F$ to~$E$, $\vec\gamma$ intersects~$S_{0; 2, \dots, 2}$. 
Note that an oriented geodesic can go directly from $E$ to $E$ (or from $F$ to $F$) if it goes through a vertex~$b_i$. 
In this case one checks that its lift crosses~$S_{0; 2, \dots, 2}$, as does any small $C^1$-perturbation. 
Therefore, denoting by~$d_n$ the diameter of~$E$, we see that, as soon as the length of~$\gamma$ is more than~$2d_n$, its lift~$\vec\gamma$ intersects~$S_{0; 2, \dots, 2}$ positively. 
This concludes the proof that~$S_{0; 2, \dots, 2}$ is a genus-one Birkhoff section for~$\fgeod$ in this case. 


\section{Butterfly surfaces, the case of spheres with conic points of large order, and the case of surfaces of positive genus with $2g+2$ conic points of large order}\label{S:Horizontal}

In order to treat new cases compared to the previous section, we need other types of surfaces that are not any more tangent to the fibration of~$\U\OO$ by fibers but transverse to it. 
Such surfaces are called horizontal. 
Here we need horizontal surfaces of a particular type that we call \emph{butterfly surfaces}. 

\subsection{Butterfly surfaces}\label{S:Butterfly} For $P$ a polygon in a surface, we denote by~$P^\circ$ the complement of the vertices of~$P$, that is, the union of the interior of~$P$ and the interior of the sides of~$P$. 

\begin{definition}(see Figure~\ref{F:Horizontal} left) 
Given a hyperbolic 2-orbifold~$\OO$, a polygon~$P$ in~$\OO$ with geodesic boundary and containing no conic point, and a unitary vector field~$v$ on~$P^\circ$ that is tangent to the sides of~$P$, the set~$H(P,v):=\overline{\{(p, v(p)\,;\, p\in P^\circ\}}\subset\U P$ is called the~\term{horizontal surface} in~$\U\OO$ associated to~$P$ and $v$. 
\end{definition}

The surface~$H(P,v)$ defined above is (the closure of) a section of the unitary tangent bundle of~$P^\circ$, hence it has the topology of a disc. 
Denoting by~$e_1, \dots, e_n$ the sides of~$P$, its boundary consists of~$2n$ \term{horizontal} segments, namely the $n$~segments denoted by $s(e_1,v), \dots, s(e_n, v)$ that are above~$e_1, \dots, e_n$ respectively, and $n$ \term{vertical} segments in the fibers of the $n$ vertices of~$P$ that correspond to the changes of direction of $v$ around that vertex. 
Note that since $v$ is tangent to every side~$e_i$, then the lift $s(e_i,v)$ coincides with either	~$\vec e_i$ or~$\revec e_i$. 

The surface~$H(P,v)$ is not transverse to~$\fgeod$ when the integral curves of~$v$ have inflection points. 
However, if the integral curves of~$v$ have non-vanishing curvature, then~ the interior of $H(P,v)$ is transverse to~$\fgeod$, and can then be cooriented by the flow. 
This condition will always be satisfied in our constructions. 

\begin{figure}[ht]
\begin{picture}(130,115)(0,0)
\put(0,0){\includegraphics[width=.35\textwidth]{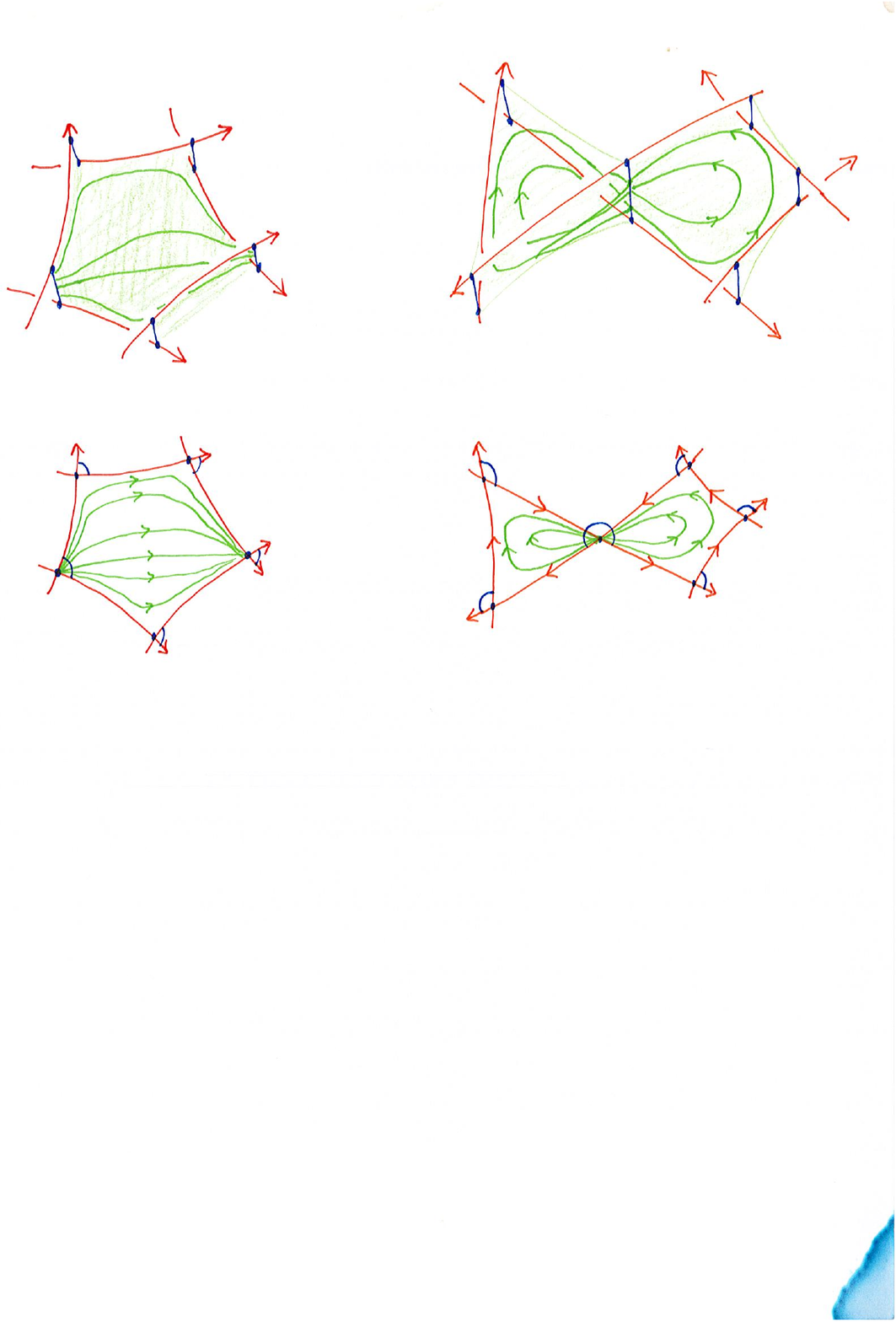}}
\put(0,52){\includegraphics[width=.35\textwidth]{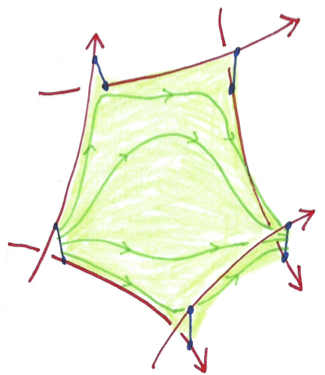}}
\put(65,5){\includegraphics[width=.45\textwidth]{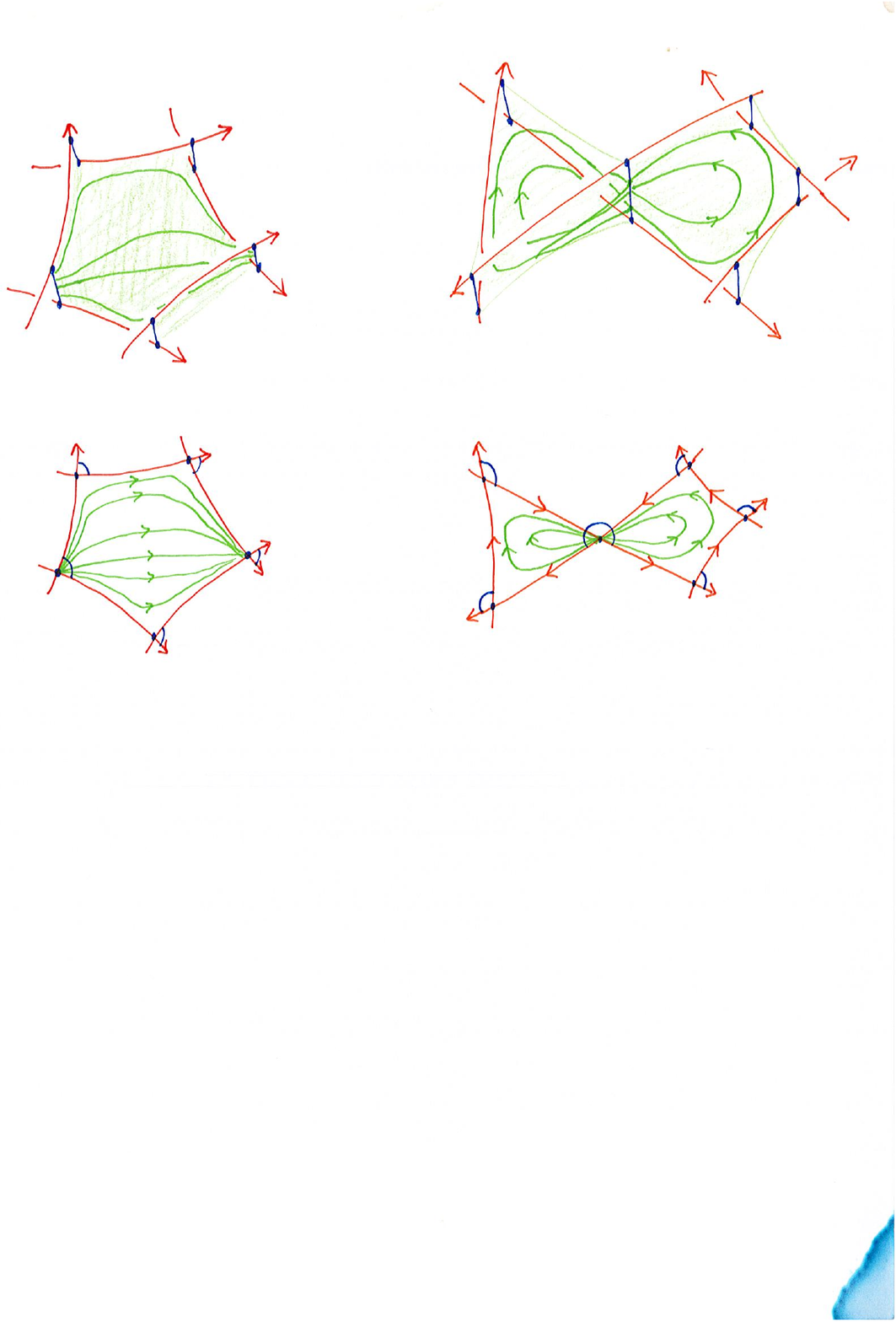}}
\put(65,60){\includegraphics[width=.45\textwidth]{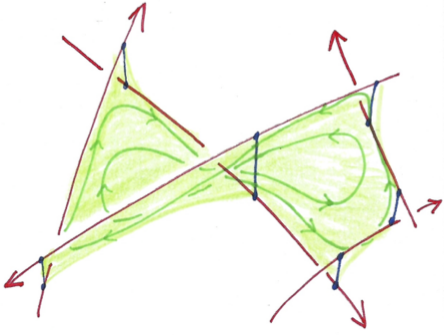}}
\put(94,20){$a$}
\put(75,15){$P^-$}
\put(113,27){$P^+$}
\end{picture}
\caption{A polygon~$P$ (bottom left), a vector field~$v$ on~$P$, and the lift~$H(P, v)$ in the unitary tangent bundle (top left). 
Also, two polygons~$P^-, P^+$ that form a butterfly with common vertex~$a$ (bottom right), and the associated butterfly surface~$B(P^-, P^+, a)$ in the unitary tangent bundle (top right).}
\label{F:Horizontal}
\end{figure}

\begin{definition}(see Figure~\ref{F:Horizontal} right)
Given a hyperbolic 2-orbifold~$\OO$, two polygons with geodesic boundary~$P^+, P^-$ in~$\OO$ that have one vertex~$a$ in common, equip the interior of $P^+$ with a vector field~$v^+$ all of whose integral lines go from $a$ to $a$ and turn to the left, and equip the interior of $P^-$ with a vector field~$v^-$ all of whose integral lines go from $a$ to $a$ and turn to the right. 
The union~$H(P^-,v^-)\cup H(P^+,v^+)$ is called the~\term{butterfly surface} associated to the triple~$(P^-, P^+, a)$, and is denoted by~$B(P^-, P^+,a)$. The two subsurfaces $H(P^-,v^-)$, $H(P^+,v^+)$ are called the \term{wings} of the butterfly. 
\end{definition}

In the above definition, since the integral lines of~$v^+$ in~$P^+$ have no inflection point, the surface~$H(P^+, v^+)$ is transverse to the geodesic flow~$\fgeod$. 
The same holds for~$H(P^-, v^-)$. 
Also the vertical boundaries of~$H(P^+, v^+)$ and~$H(P^-, v^-)$ in~$a$ coincide with opposite orientation, and therefore cancel each other out, so that the butterfly surface~$B(P^-, P^+,a)$ has horizontal boundary in the lifts of the edges of~$P^-\cup P^+$ and vertical boundary in the lifts of the vertices of~$P^-\cup P^+$ different from~$a$. 

Note that different vector fields $v^+, v^-$ satisfying the above assumptions yield isotopic surfaces with the same boundary in the unitary tangent bundle. 
Therefore the specific choice of vector fields does not matter as long as we are only concerned with isotopy classes of surfaces in the unitary tangent bundle. 
Butterfly surfaces were considered (in a more pecular context and not under this name) in~\cite{GenusOne, HM}.


\subsection{Proof of Theorem~\ref{T:G1BS} when $g=0$ and $p_1, \dots, p_n\ge 3$}\label{S:Sphere}

Fix an integer $n\ge 3$, and $p_1, \dots, p_n\ge 3$ so that $\frac1{p_1}+\dots+\frac1{p_n}<n{-}2$ (this is the condition for the considered orbifold to be hyperbolic). 

\subsubsection{Choice of the orbifold metric}\label{S:MetricSphere} (see Figure~\ref{F:CasSphere2}) 
Consider a $n$-gon~$E$ in~$\Hy$ with vertices by~$b_1, \dots, b_n$ so that the angle at~$b_i$ is $\pi/p_i$. 
Consider the image~$F$ of $E$ under a reflection across~$(b_1b_2)$, and denote its vertices by $b_1, b_2, b'_3, \dots, b'_n$. 
Identify every side $[b_ib_{i+1}]$ of ~$E$ with the side  $[b'_ib'_{i+1}]$ of~$F$. 
The quotient of $E\cup F$ under these identifications is a hyperbolic orientable 2-orbifold that is a sphere with $n$ conic points coming for the vertices~$b_1, \dots, b_n$. 
The total angle at~$b_i$ is~$2\pi/b_i$, so that the order of~$b_i$ is $p_i$. 
Denote the obtained orbifold by~$\OO_{0; p_1, \dots, p_n}$. 
The polygons $E$ and $F$ induce two hemispheres on~$\OO_{0; p_1, \dots, p_n}$, while the segments~$[b_1b_2], [b_2b_3],\dots, [b_nb_1]$ form an equator. 

\begin{figure}[ht]
\begin{picture}(110,60)(0,0)
\put(0,0){\includegraphics[width=.32\textwidth]{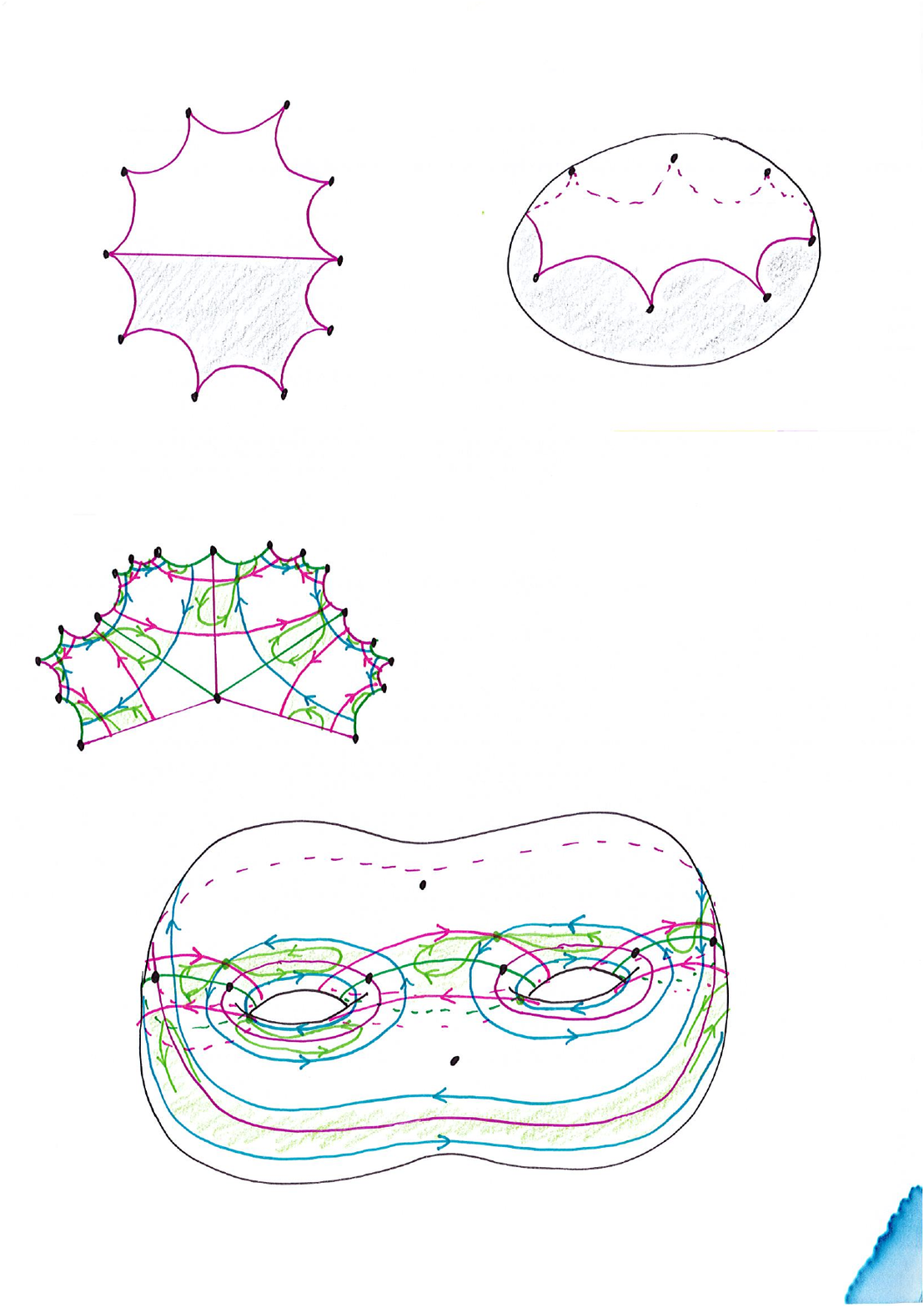}}
\put(53,2){\includegraphics[width=.42\textwidth]{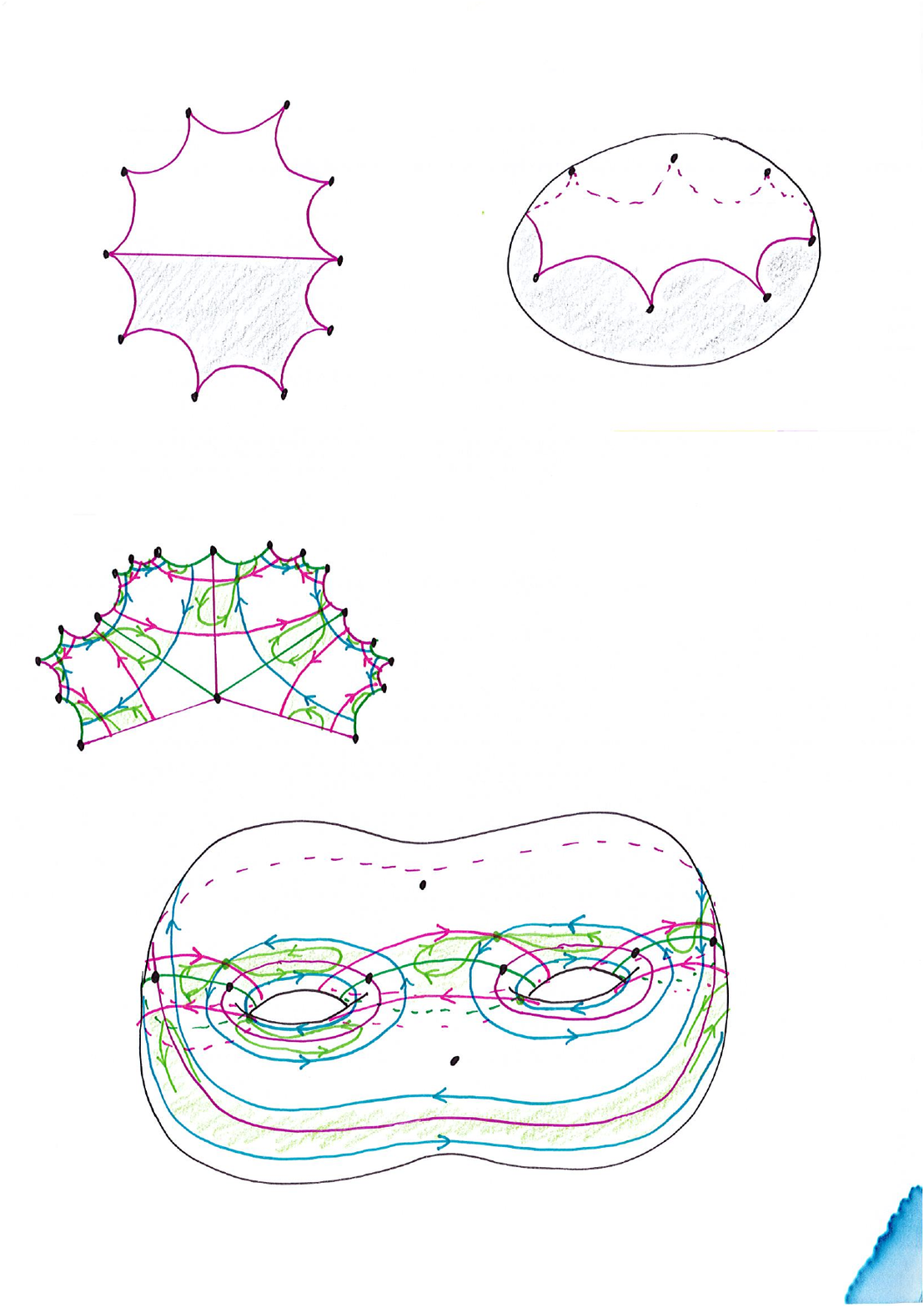}}
\put(24,38){$E$}
\put(23,14){$F$}
\put(-2,26){$b_1$}
\put(46,26){$b_2$}
\put(42,42){$b_3$}
\put(1,43.3){$b_n$}
\put(42,10){$b'_3$}
\put(0,11){$b'_n$}
\put(83,33){$E$}
\put(75,10){$F$}
\put(83,15){$b_1$}
\put(103.5,19){$b_2$}
\put(57,23){$b_n$}
\end{picture}
\caption{The orbifold~$\OO_{0; p_1, \dots, p_n}$, with the $n$ conic points~$b_1, \dots, b_n$ of respective order~$p_1, \dots, p_n$ on the equator. 
}
\label{F:CasSphere2}
\end{figure}

\subsubsection{Choice of the boundary orbits}\label{S:BoundarySphere}
For $i=1, \dots, n$, denote by $r_i$ the rotation around~$b_i$ of angle~$+2\pi/p_i$. 
The group~$G$ generated by the family~$\{r_i\}_{i=1, \dots, n}$ is the orbifold fundamental group of~$\OO_{0; p_1, \dots, p_n}$, that is, one can identify $\OO_{0; p_1, \dots, p_n}$ with the quotient $\Hy/G$. 
The composition $r_{i}\circ r_{i+1}^{-1}$ (counting mod~$n$) is a hyperbolic isometry of~$\Hy$ that we denote by~$g_i$ and whose oriented axis we denote by~$\beta_i$, see Figure~\ref{F:Ell33} left. 
The projection of~$\beta_i$ on~$\OO_{0;p_1, \dots, p_n}$ is a figure-eight curve (also denoted by~$\beta_i$) that winds positively around~$b_{i+1}$ and negatively around~$b_i$, see Figure~\ref{F:Ell33} right.
Denote by~$\vec\beta$ the link~$\vec\beta_1\cup\dots\cup\vec\beta_n$ in~$\U\OO_{0; p_1, \dots, p_n}$. 

\begin{figure}[ht]
\begin{picture}(140,65)(0,0)
\put(0,0){\includegraphics[width=.45\textwidth]{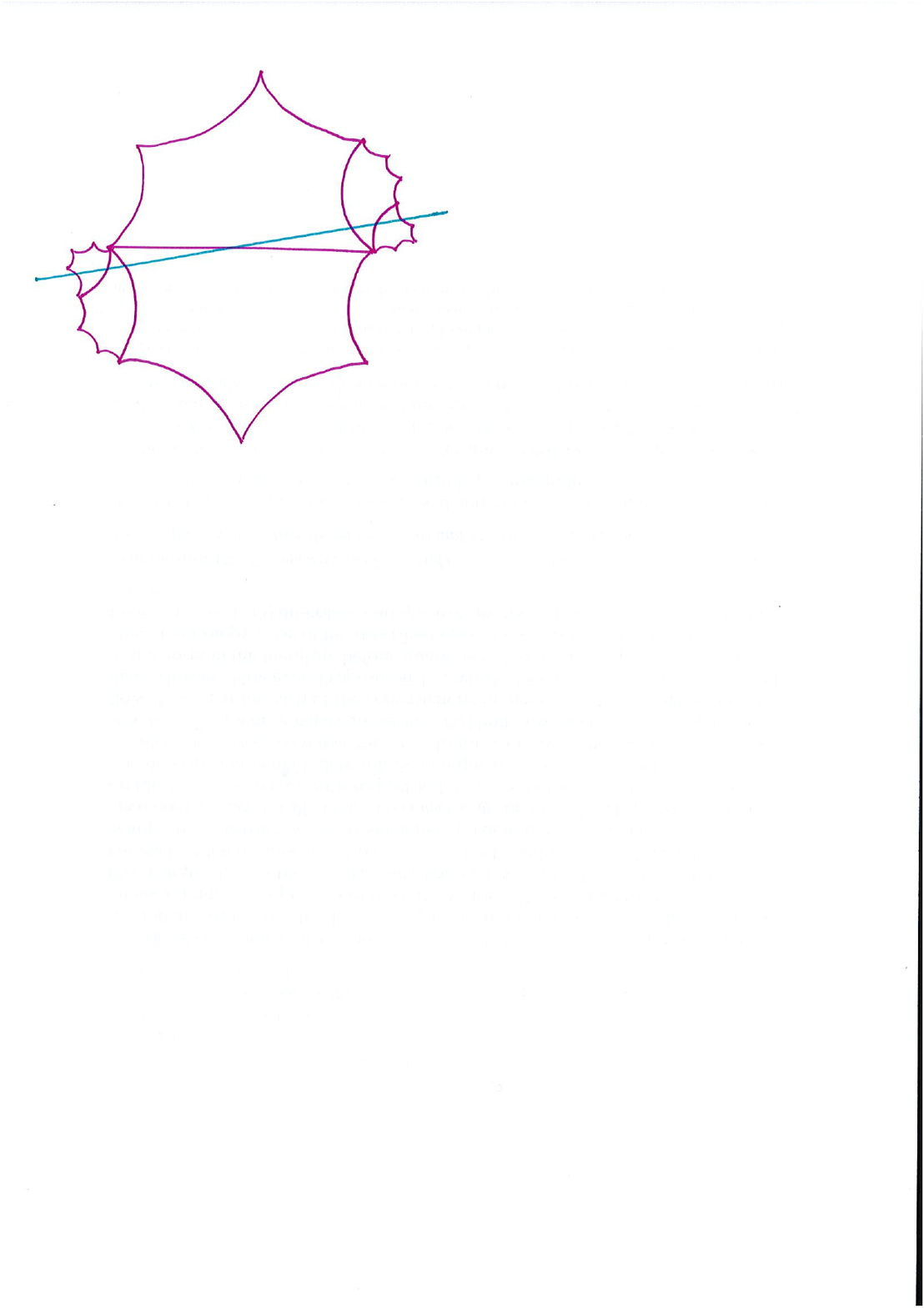}}
\put(81,15){\includegraphics[width=.32\textwidth]{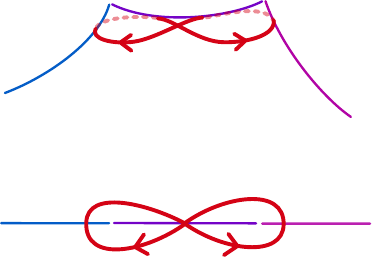}} 
\put(32,42){$E$}
\put(32,20){$F$}
\put(57,35.5){\Small{$r_2(E)$}}
\put(51.7,42.5){\small{$r_2(F)$}}
\put(8,22){\Small{$r_1(E)$}}
\put(4,30){\Small{$r_1(F)$}}
\put(10,36){$b_1$}
\put(57,29){$b_2$}
\put(-3,23){\small{$r_1(b_2)$}}
\put(54,54){$b_3$}
\put(62,41){\small{$r_2(b_1)$}}
\put(94,49){$b_1$}
\put(115,49){$b_2$}
\put(102,42){$m_1$}
\put(102,22){$m_1$}
\put(91,14){$\beta_1$}
\put(116,14){$\beta_1$}
\end{picture}
\caption{On the left the polygons~$E, F$ and their images under $r_1$ and~$r_2$. 
The isometry that sends $r_2(E)$ onto $r_1(E)$ (and respecting the labels of the vertices) is hyperbolic and that its axis is the blue line. 
On the right, its projection on the orbifold of the axis is a figure-eight curve that winds around $b_1$ and $b_2$, with a double point~$m_1$ on the equator.}
\label{F:Ell33}
\end{figure}

\subsubsection{Choice of the surface in~$\U\OO_{0; p1, \dots, p_n}$}\label{S:SurfaceSphere}
Fix $i$ between $1$ and $n$. 
Call $m_i$ the double-point on~$\beta_i$. 
Then $\beta_i$ intersects $\beta_{i-1}$ and $\beta_{i+1}$ twice in a neighbourhood of~$b_i$ (at points called $c_i^E$ and $c_i^F$ depending on the hemisphere in which they lie) and $b_{i+1}$ (at points called $c_{i+1}^E$ and $c_{i+1}^F$), see Figure~\ref{F:S33} left. 
Therefore they define two triangles~$T_i^-$ with vertices $m_i, c_i^E$ and $c_i^F$, and~$T_i^+$ with vertices $m_i, c_{i+1}^E$ and $c_{i+1}^F$. 
Thanks to the orientations of the edges, there is a butterfly surface $B(T_i^-, T_i^+, m_i)$, constructed with one vector field~$v_i^{-}$ on~$T_i^{-}$ so that the integral curves of~$v_i^{-}$ turn to the right (with non-vanishing curvature) and go from $m_i$ to~$m_i$, as in Figure~\ref{F:S33} right, and one vector field $v_i^{+}$ on~$T_i^{+\,\circ}$ so that the integral curves of~$v_i^{+}$ turn to the left (with non-vanishing curvature) and go from $m_i$ to~$m_i$. 
The horizontal boundary of~$B(T_i^-, T_i^+, m_i)$ consists of the lifts of the sides of $T_i^{+}$ and~$T_i^-$ travelled with anti-clockwise orientation. 
Its vertical boundary consists of four segments in the fibers of $c_{i}^E, c_{i}^F, c_{i+1}^E$, and $c_{i+1}^F$.

\begin{figure}[ht]
\begin{picture}(160,27)(0,0)
\put(0,-1){\includegraphics[width=.42\textwidth]{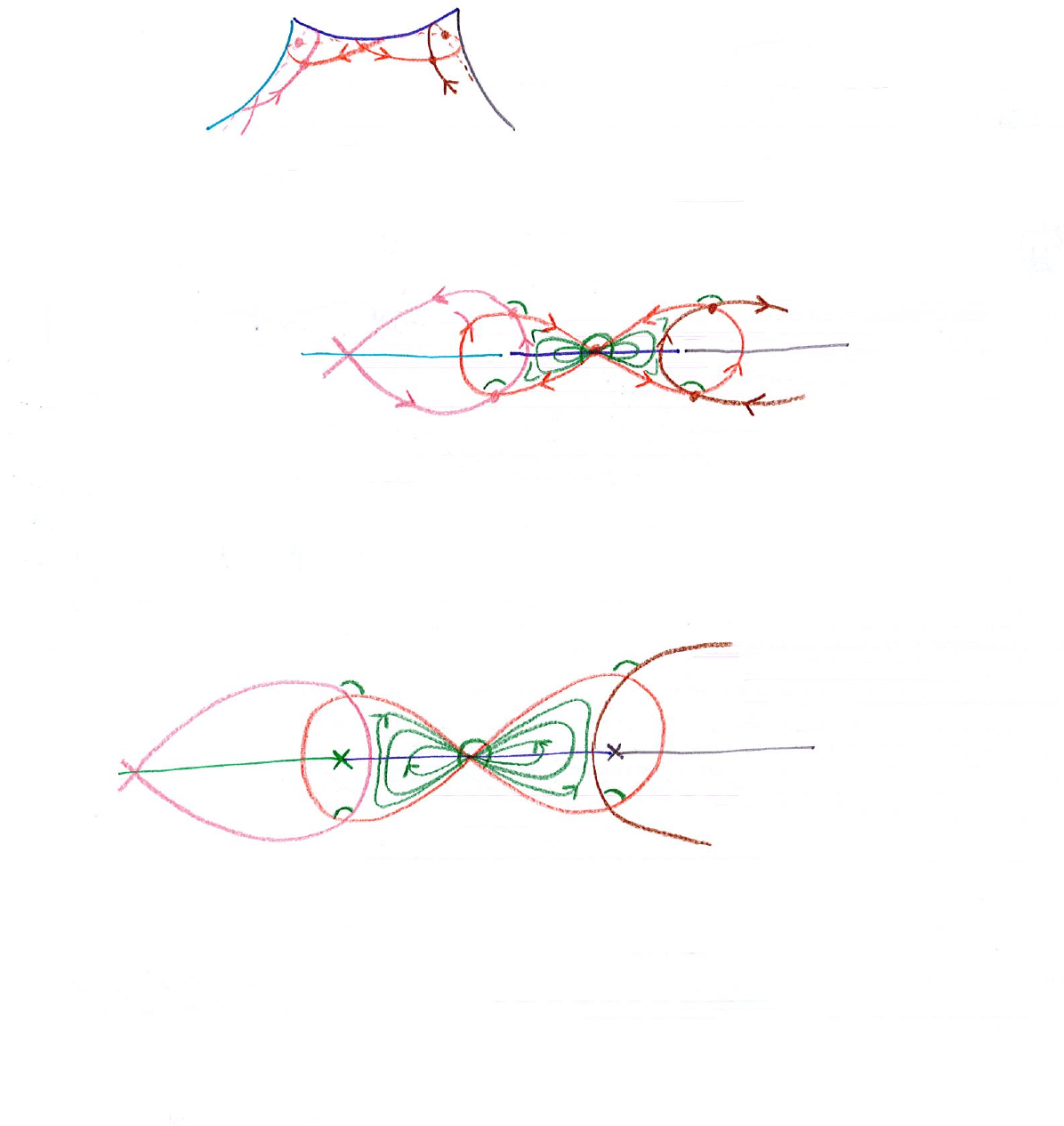}}
\put(68,2){\includegraphics[width=.56\textwidth]{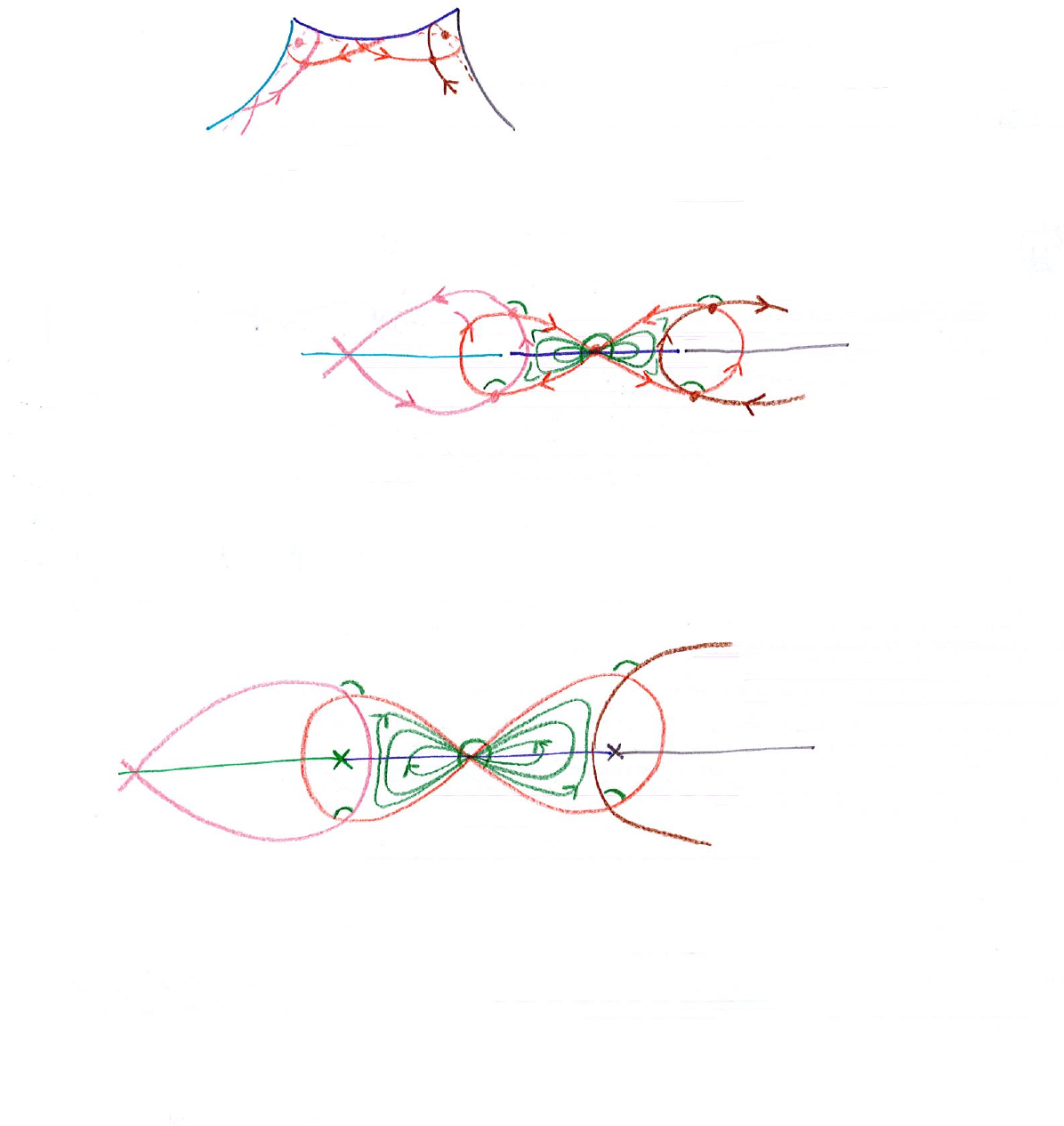}}
\put(15,23){$b_i$}
\put(30,21){$m_i$}
\put(20,10){$c_i^F$}
\put(41,10){$c_{i+1}^F$}
\put(50,25){$b_{i+1}$}
\put(112,19){$m_i$}
\put(95,25){$c_i^E$}
\put(95,2){$c_i^F$}
\put(106,6){$T_i^{-}$}
\put(116,5){$T_i^{+}$}
\put(128,27){$c_{i+1}^E$}
\put(125,3){$c_{i+1}^F$}
\end{picture}
\caption{On the left, the geodesics $\beta_i$ (red), $\beta_{i-1}$ (pink) and $\beta_{i+1}$ (brown). 
On the right, seen from above, the same geodesics, the triangles $T_i^{-}$ and $T_i^{+}$, and the integral curves of the vector fields $v_i^{-}$ and $v_i^{+}$ (in green). 
When lifted in~$\U\OO_{0;p_1, \dots, p_n}$ these vector fields lift into a butterfly surface~$B(T_i^-, T_i^+, m_i)$. 
The vertical boundaries are represented by arcs of direction at the points $m_i, c_{i}^E$, $c_{i}^F, c_{i+1}^E$, and $c_{i+1}^F$.}
\label{F:S33}
\end{figure}

Consider the surface~$S_{0; p_1, \dots, p_n}$ that is the union of the $n$ butterfly surfaces $B(T_1^-,T_1^+,m_1)$ $\cup\dots\cup B(T_n^-,T_n^+,m_n)$. 
Its horizontal boundary is exactly~$\vec\beta$ with negative orientation, see Figure~\ref{F:S33} right. 
For every point of type~$m_i$, $c_i^E$, or $c_i^F$, the vertical boundary components in the corresponding fiber come from two adjacent triangles and pairwise cancel, so that $S_{0; p_1, \dots, p_n}$ has no vertical boundary. 
Unlike the vertical surfaces of the previous section, the surface $S_{0; p_1, \dots, p_n}$ is already smooth. 
Also, it is transverse to the geodesic flow, thanks to the assumptions on the orbits of the vector fields~$v_i^{-}$ and~$v_i^{+}$.

\subsubsection{Computation of the genus}\label{S:GenusSphere}
We want to compute the Euler characteristic of the surface~$S_{0; p_1, \dots, p_n}$. 
It consists of the $2n$ wings of the considered butterflies, that are horizontal surfaces of the form $H(T, v)$. 
Each of these parts is a hexagon with $3$ horizontal sides and $3$ vertical sides that are glued with an adjacent hexagon. 
The three horizontal sides contribute by $-1$ to the Euler characteristic, and the three vertical sides contribute by~$-\frac12$ (as they are counted twice). 
In the same vein, all six vertices contribute by~$+\frac12$. 
Therefore the contribution of every hexagon is~$+1-3\cdot 1-3\cdot\frac12+6\cdot \frac12=-\frac12$.

Since there are $2n$ such hexagons, we find $\chi(S_{0; p_1, \dots, p_n})=2n\cdot-\frac12=-n$. 
Since it has $n$ boundary components, its genus is~$1$. 

\medskip

Alternatively, as in Remark~\ref{R:Genus}, one can use that the surface~$S_{0; p_1, \dots, p_n}$ is transverse to the geodesic flow. 
Therefore, using the Poincaré-Hopf formula, each boundary component contributes to the Euler characteristic an amount equal to minus its absolute self-linking with respect to the stable foliation~$-|\slk^{\Fs}(\vec\beta_i)|$. 
By rotating the framing toward the vertical direction, $\slk^{\Fs}(\vec\beta_i)$ is equal to the self-linking with respect to the fiber direction $\slk^{\U}(\vec\beta_i)$. 
One now checks that around a point of type~$c_i^{E/F}$, the contribution is~$-1/2$, while around a of type~$m_i$ the contribution is~$+1/2$, see the top right of Figure~\ref{F:Horizontal}. 
Since each curve~$\beta_i$ contains four points of the first type, and two of the second (it goes through~$m_i$ twice), $-|\slk^{\U}(\vec\beta_i)|$ is equal to~$-1$.  
Therefore $S_{0; p_1, \dots, p_n}$ has Euler characteristic~$-n$, hence it is a $n$-punctured torus.

\subsubsection{Intersection with orbits of the geodesic flow}\label{S:IntersectionSphere}
As explained in Section~\ref{S:Butterfly}, butterfly surfaces are transverse to the geodesic flow~$\fgeod$, and so is $S_{0; p_1, \dots, p_n}$. 

\begin{figure}[ht]
\begin{picture}(160,70)(0,0)
\put(0,0){\includegraphics[width=.49\textwidth]{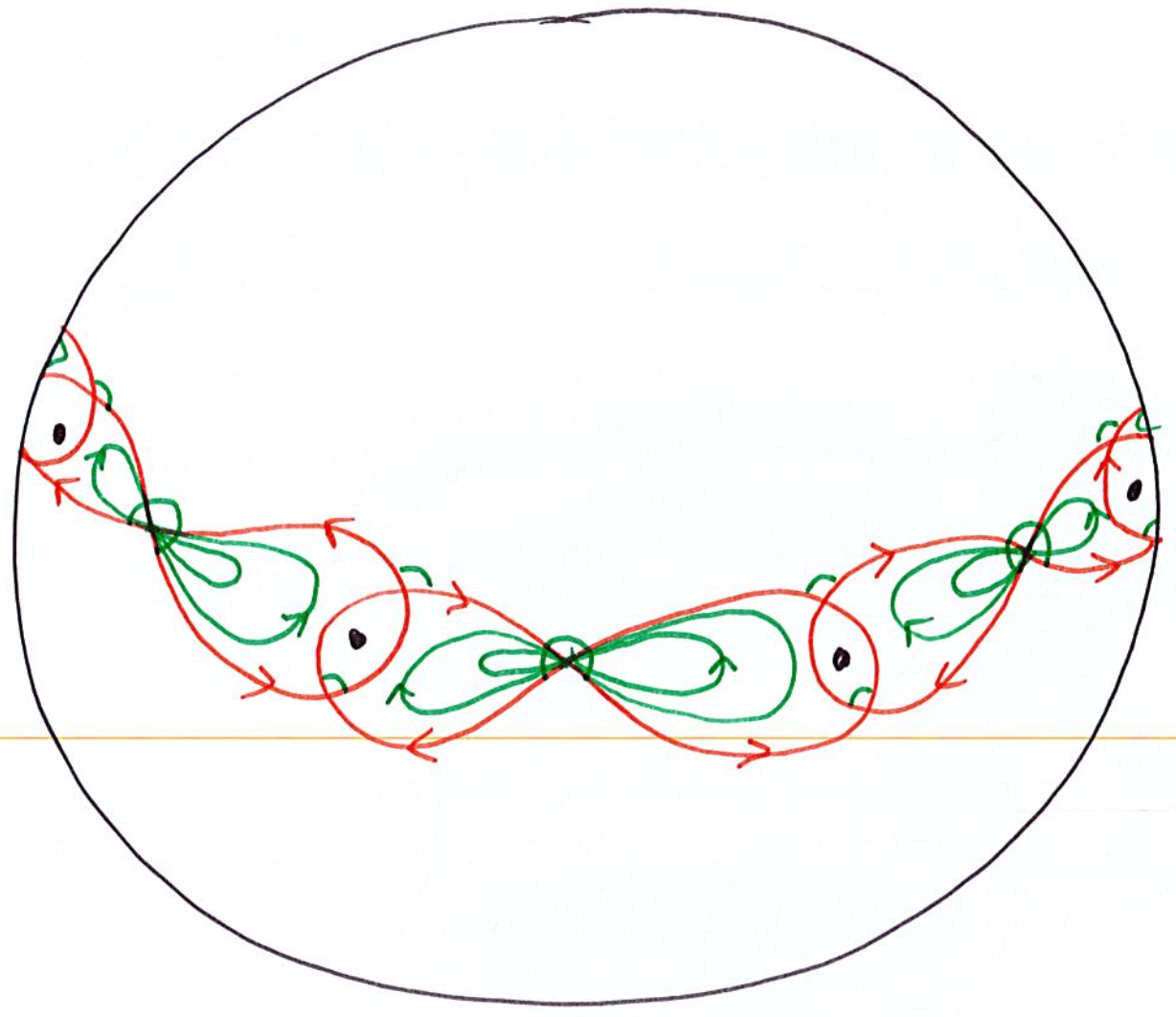}}
\put(78,5){\includegraphics[width=.49\textwidth]{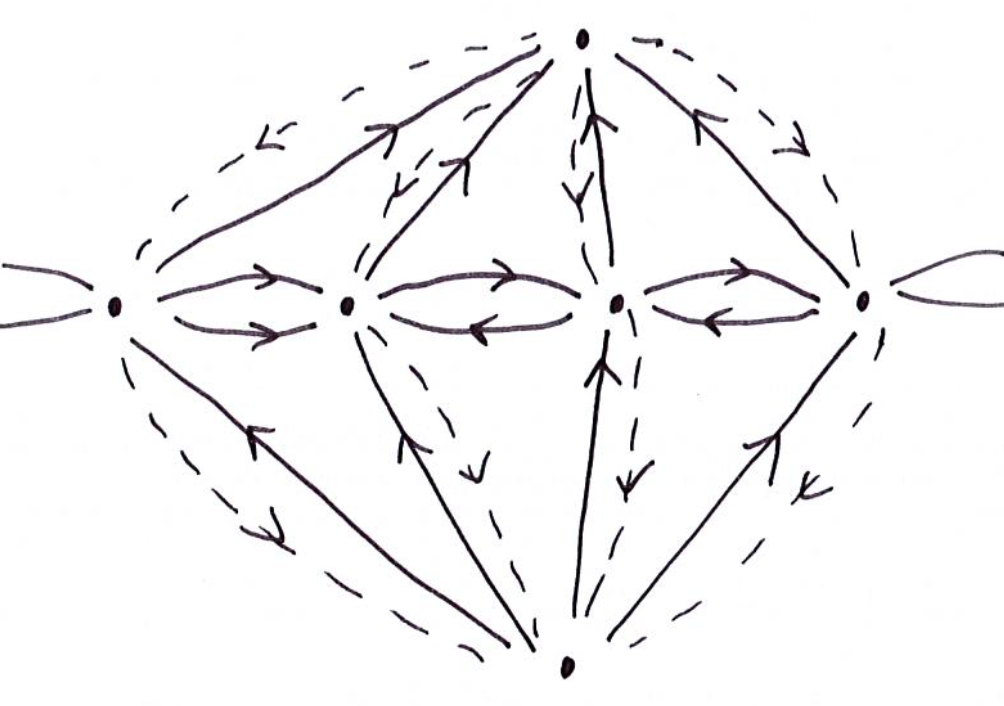}}
\put(38,48){$E^\circ$}
\put(35,5){$F^\circ$}
\put(20,22){\Small{$B_1$}}
\put(26.5,20.5){\Small{$T_1^-$}}
\put(45.5,21){\Small{$T_1^+$}}
\put(118,56){$E^*$}
\put(118,3.5){$F^*$}
\put(100,29){\small{$B_1^*$}}
\put(116,29){\small{$B_2^*$}}
\end{picture}
\caption{On the left the surface~$S_{0; p_1, \dots, p_n}$ and the different regions of the orbifold delimited by the multicurve~$\beta$. On the right the dual graph~$G_{0; p_1, \dots, p_n}^*$.}
\label{F:S3333}
\end{figure}

By construction, the orbifold~$\OO_{0;p_1, \dots, p_n}$ consists of two hemispheres glued together. 
The collection~$\beta$ lies in a neighborhood of the equator $[b_1b_2]\cup[b_2b_3]\cup\dots[b_nb_1]$. 
We call~$E^\circ$ and $F^\circ$ the two components of $\OO_{0;p_1, \dots, p_n}\setminus\beta$ that do not intersect the equator. 
We call~$B_i$ the bigon that contains the conic point~$b_i$, its vertices are $c_i^+, c_i^-$. 
In this way~$E^\circ, F^\circ, B_1, \dots, B_n, T_1^-, T_1^+, \dots,  T_n^-, T_n^+$ tesselate the orbifold~$\OO_{0; p_1, \dots, p_n}$. 

Define the graph~$G^*_{0; p_1, \dots, p_n}$ with $n+2$ vertices labeled~$E^*, F^*, B_1^*, \dots, B_n^*$ and whose edges are shown on Figure~\ref{F:S3333}. Some edges are plain and some are dotted as on the picture. 

Consider an arbitrary oriented geodesic. 
Since all pieces of the tesselation of~$\gamma$ on $\OO_{0;p_1, \dots, p_n}$ are topological disc containing at most one conic points, $\gamma$ can only stay a uniformly bounded time in each of them. 
Moreover, unless~$\gamma$ is one of $\beta_1, \dots, \beta_n$, no more than two triangles appear consecutively, so that every long enough subarc of $\gamma$ visits one of the regions $E^\circ, F^\circ, B_1, \dots, B_n$. 
Therefore~$\gamma$ can be decomposed in the form $\dots \cup\gamma_{-1}\cup \gamma_{0}\cup \gamma_{1}\cup\dots$, where each subsegment $\gamma_j$ crosses one of the regions $E^\circ, F^\circ, B_1, \dots, B_n$ and has bounded length. 
Thus one can associate to~$\gamma$ an infinite path~$\gamma^*$ in~$G^*_{0; p_1, \dots, p_n}$ that corresponds to the list of those faces visited by~$\gamma$. 

Now, every bold edge in~$G^*_{0; p_1, \dots, p_n}$ corresponds to a positive intersection between the lift~$\vec\gamma$ with~$S_{0; p_1, \dots, p_n}$.  
Since $\gamma^*$ cannot avoid the bold edges (indeed, at least every third edge followed by~$\gamma^*$ is bold), $\gamma$ necessarily intersects~$S_{0; p_1, \dots, p_n}$ after a uniformly bounded time, so that~$S_{0; p_1, \dots, p_n}$ is indeed a genus-one Birkhoff section for~$\fgeod$.


\subsection{Proof of Theorem~\ref{T:G1BS} when $g\ge 1$, $n=2g+2$ and $p_1, \dots, p_n\ge 3$}\label{S:Butterfly1}

Fix an integer $g\ge 1$, set $n:=2g+2$, and fix integers $p_1, \dots, p_n\ge 3$. 
The construction we now describe is close to the construction in the previous section. 
Actually, if every $p_i$ was even, it could be obtained by an order 2 covering of the latter. 

\subsubsection{Choice of a suitable orbifold metric}\label{S:MetricButterfly}
The surface is similar as the one in Section~\ref{S:MetricSurface}, but with conic points at the vertices of the considered $2g{+}2$ gons, as depicted on Figure~\ref{F:CasButterfly}. 

Start with a $2g{+}2$-gon~$A$ in~$\Hy$, whose vertices are denoted by~$a_1, \dots, a_{2g+2}$, so that the angle in~$a_i$ is ${\pi}/{2p_i}$. 
Consider the symmetry~$s_h$ with axis~$(a_1a_2)$ and denote by~$B$ the image of~$A$. 
Also consider the symmetry~$s_v$ with axis~$(a_1a_{2g+2})$ and denote by~$D$ the image of~$A$. 
Finally consider the rotation~$r$ with center $a_1$ and angle $-{\pi}/{p_1}$, and denote by~$C$ the image of~$A$. 

Consider the following identifications of the sides of~$A\cup B\cup C\cup D$:
\begin{itemize}
\item $[a_{2i}a_{2i+1}]$ with $s_{v}([a_{2i}a_{2i+1}])$, 
\item $r([a_{2i}a_{2i+1}])$ with $s_{h}([a_{2i}a_{2i+1}])$, 
\item $[a_{2i+1}a_{2i+2}]$ with $s_{h}([a_{2i+1}a_{2i+2}])$, 
\item $r([a_{2i+1}a_{2i+2}])$ with $s_{v}([a_{2i_1}a_{2i+2}])$. 
\end{itemize}
 In other words, edges of $A$ are identified with their images under $s_{v}$ and $s_h$ in $B$ and $D$ alternatively. 

The quotient is a hyperbolic orbifold, that we denote by~$\Sigma_{g; p_1, \dots, p_{2g+2}}$. 
The genus of~$\Sigma_{g; p_1, \dots, p_{2g+2}}$ is~$g$ (as in Section~\ref{S:Surface}). 
Since every vertex~$a_i$ is identified with $s_v(a_i), s_h(a_i)$ and $r(a_i)$, in the quotient, the total angle at~$a_i$ is~$\frac{2\pi}{p_i}$, hence $a_i$ is a conic point of order~$p_i$. 
The points $a_1, \dots, a_{2g+2}$ projects onto $2g+2$ conic points of respective orders~$p_1, \dots, p_{2g+2}$. 

Also we denote by $e_i$ the projection of the edge~$[a_ia_{i+1}]$ in the quotient and by $e'_i$ the projection of the edge~$[r(a_i)r(a_{i+1})]$. 
The faces, sides and vertices of~$A\cup B\cup C\cup D$ then induce a graph~$G_g$ on~$\Sigma_g$ with~$4$ faces (that correspond to $A$, $B$, $C$, and $D$), $4g{+}4$ edges (namely $e_1, e'_1, \dots, e_{2g+2}, e'_{2g+2}$), and $2g{+}2$ vertices. 
Picturing~$\Sigma_{g; p_1, \dots, p_{2g+2}}$ as on Figure~\ref{F:CasButterfly2}, the two faces~$A$ and $B$ lie on the top, and~$C$ and $D$ lie on the bottom. 
Also~$B$ and $C$ lie on the front, and~$A$ and $D$ lie on the back. 

\begin{figure}[ht]
\begin{picture}(120,70)(0,0)
\put(0,0){\includegraphics[width=.8\textwidth]{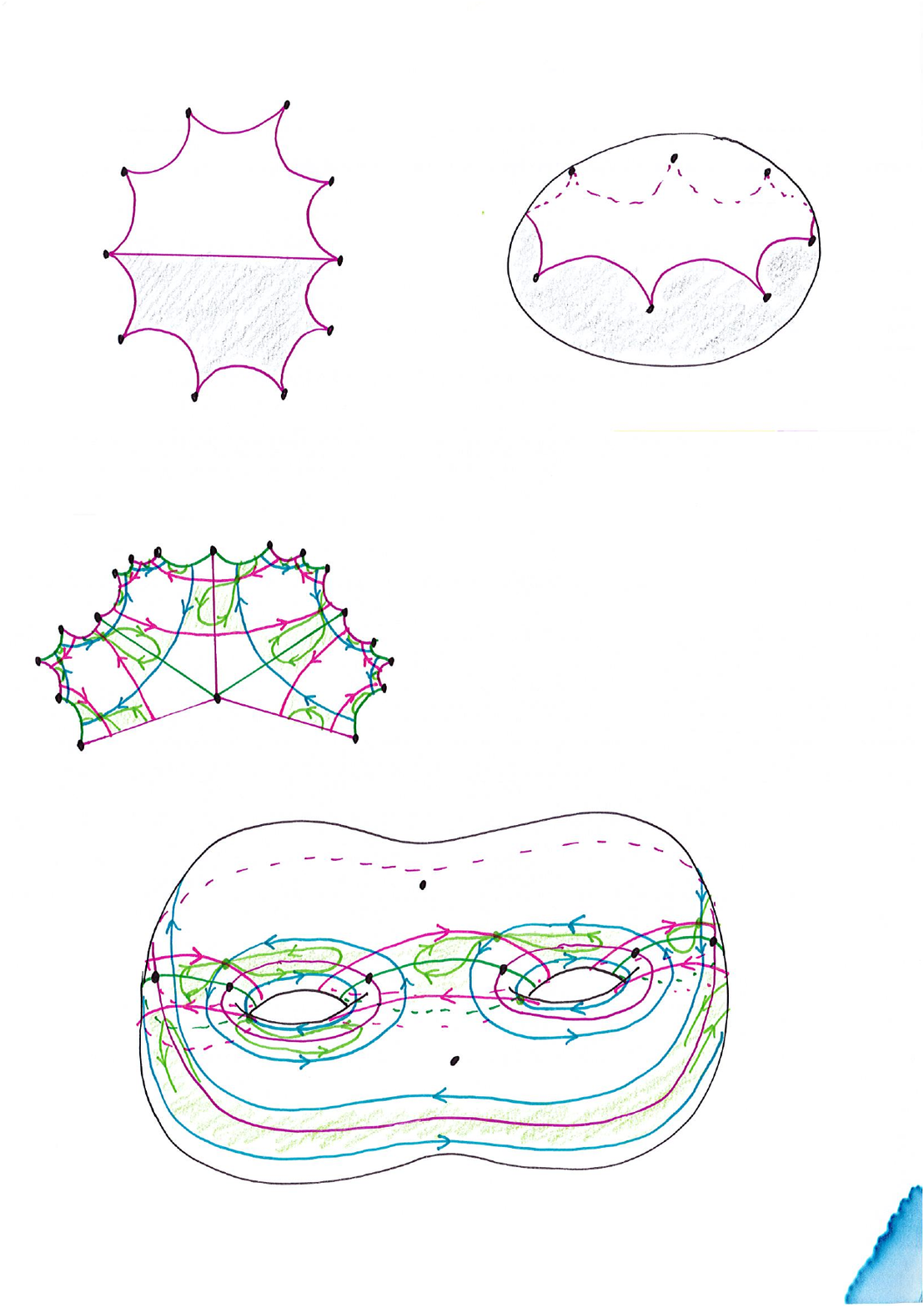}}
\put(79,52){$A^\circ$}
\put(99,23){$B^\circ$}
\put(19,24){$C^\circ$}
\put(38,52){$D^\circ$}
\put(60,14){$a_1$}
\put(101,48){$a_2$}
\put(10,45){$s_v(a_2)$}
\put(55,70){$a_{2g+2}$}
\put(105,1){$s_h(a_{2g+2})$}
\put(1,0){$r(a_{2g+2})$}
\put(37,45){$\alpha_1$}
\put(80,45){$\alpha_1$}
\put(88,49){$\alpha_2$}
\put(100,31){$\alpha_2$}
\put(24,29){$\alpha'_1$}
\put(90,26){$\alpha'_1$}
\put(68,51){$\alpha_{2g+2}$}
\put(31.5,52.5){$\alpha'_2$}
\put(38,22){$\alpha'_{2g+2}$}
\put(52,28){$Q_1$}
\put(72.5,37){$K_1$}
\put(41,36){$K'_1$}
\end{picture}
\caption{The $2g{+}2$-gon~$A$, and its images~$B, D, C$ under symmetries and the rotation~$r$ around~$a_1$. 
The geodesics~$\alpha_1, \alpha_3, \dots, \alpha_{2g+1}$ visit $A$ and $D$, they are shown in blue. 
Their images~$\alpha'_1, \alpha'_3, \dots, \alpha'_{2g+1}$ under~$r$ visit $B$ and $C$, they are also shown in blue. 
The geodesics~$\alpha_2, \alpha_4, \dots, \alpha_{2g+2}$ visit $A$ and $B$, they are shown in pink. 
Their images~$\alpha'_2, \alpha'_4, \dots, \alpha'_{2g+2}$ under~$r$ visit $C$ and $D$, they are also shown in pink.
The capital letters denote the faces of the complement of the collection~$(\alpha_i)_{i=1, \dots, 2g+2}\cup(\alpha'_i)_{i=1, \dots, 2g+2}$.  
}
\label{F:CasButterfly}
\end{figure}

\subsubsection{Choice of the boundary orbits}\label{S:BoundaryButterfly}

Unlike the construction of Section~\ref{S:Surface}, we do not choose the edges~$[a_1a_2], [a_2a_3], \dots$ for the boundary of the Birkhoff section, as they visit the conic points. 
Instead we consider curves obtained by pushing these edges away from the conic points. 

For every $i$ in $\{0, \dots, g\}$, consider the oriented arc~$e_{2i+1}$ in~$A$ that starts in $[a_{2i}a_{2i+1}]$ and is orthogonal to it, and ends in~$[a_{2i+2}a_{2i+3}]$ and is orthogonal to it. 
Also consider the oriented arc~$e_{2i+2}$ in~$A$ that starts in $[a_{2i+3}a_{2i+4}]$ and is orthogonal to it, and ends in~$[a_{2i+1}a_{2i+2}]$ and is orthogonal to it. 
These arcs extend into the faces~$B$ and $D$ respectively, and define oriented geodesics $(\alpha_i)_{1\le i\le 2g+2}$ on~$\Sigma_{g; p_1, \dots, p_{2g+2}}$, see Figure~\ref{F:CasButterfly}.

The same construction in~$C$ (or composing the previous paragraph with the rotation~$r$) yields oriented geodesics $(\alpha'_i)_{1\le i\le 2g+2}$ on~$\Sigma_{g; p_1, \dots, p_{2g+2}}$.

Then consider the link~$L_{g; p_1, \dots, p_{2g+2}}:=\vec\alpha_1\cup\dots\cup\vec\alpha_{2g+2}\cup\vec\alpha'_1\cup\dots\cup\vec\alpha'_{2g+2}$ in~$\U\Sigma_{g; p_1, \dots, p_{2g+2}}$.

\subsubsection{Choice of the surface}\label{S:SurfaceButterfly}

The collection~$(\alpha_i)_{1\le i\le 2g+2}\cup(\alpha'_i)_{1\le i\le 2g+2}$ decomposes the orbifold $\Sigma_{g; p_1, \dots, p_{2g+2}}$ into $4g+6$ regions, see Figures~\ref{F:CasButterfly} and~\ref{F:CasButterfly2}, namely 
\begin{itemize}
\item $2g{+}2$ quadrilaterals containing the points $a_1, \dots, a_{2g+2}$, that we denote by $Q_1$, $\dots, Q_{2g+2}$, 
\item $4g{+}4$ quadrilaterals containing the central parts of the edges $[a_ia_{i+1}]$ and $[r(a_i)r(a_{i+1})]$, that we denote by $K_1, \dots, K_{2g+2}$ and $K'_1, \dots, K'_{2g+2}$ respectively, 
\item four faces corresponding the centers of $A, B, C,$ and~$D$, that we denote by~$A^\circ, B^\circ, C^\circ,$ and~$D^\circ$. 
\end{itemize}

\begin{figure}[ht]
\begin{picture}(120,90)(0,0)
\put(0,0){\includegraphics[width=.8\textwidth]{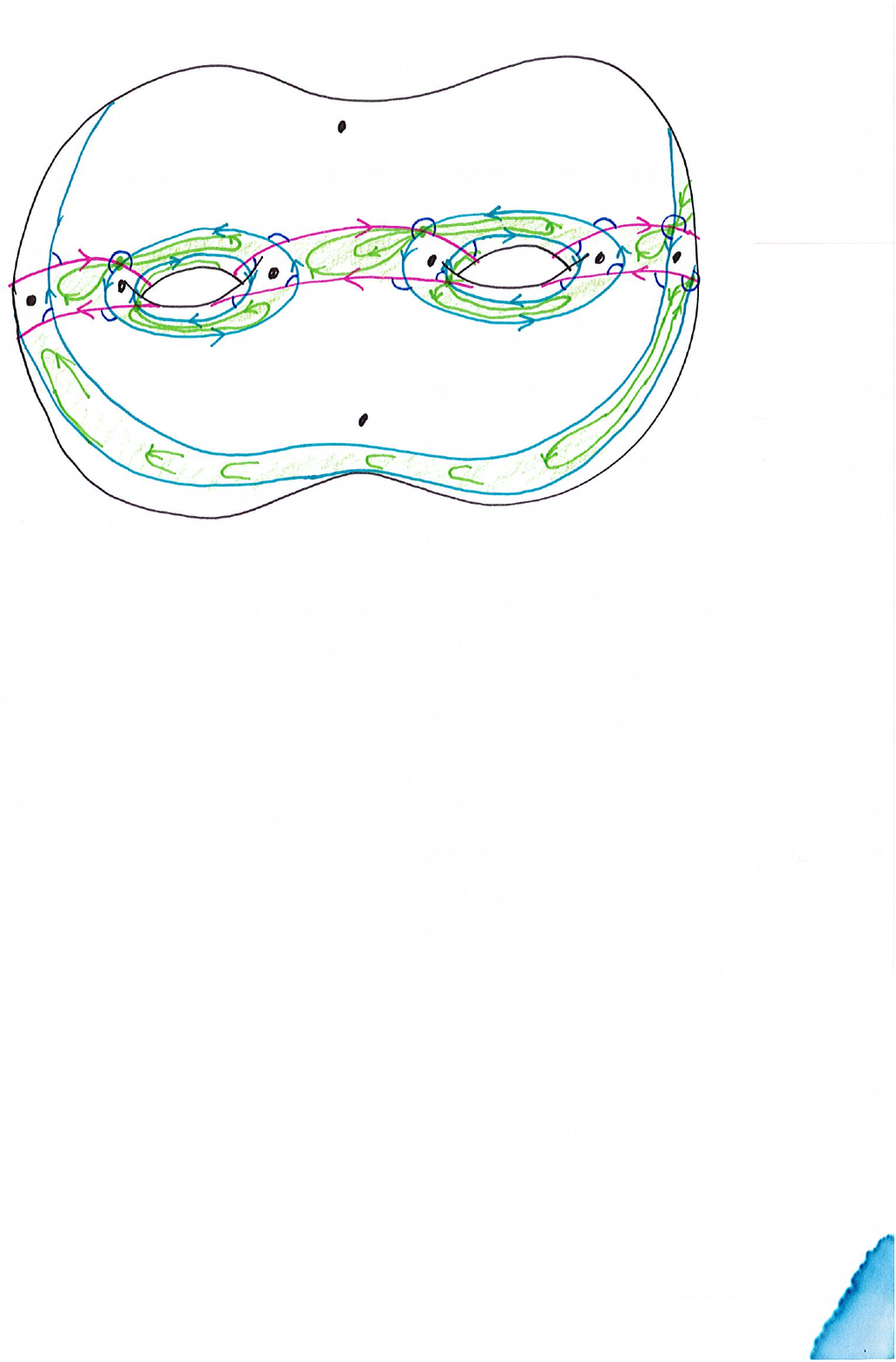}}
\put(70,65){$A^\circ$}
\put(75,17){$B^\circ$}
\put(-3,35){$Q_1$}
\put(10,38){$K_1$}
\put(29,44){$K_2$}
\put(36,32){$K'_2$}
\put(55,342){$K_3$}
\put(16,15){$K'_{2g+2}$}
\put(118,44){$Q_{2g+2}$}
\put(12,48){$\alpha_1$}
\put(28,51){$\alpha_2$}
\put(53,51){$\alpha_3$}
\put(13,31){$\alpha'_1$}
\put(54,38){$\alpha'_3$}
\put(28,29){$\alpha_2$}
\put(75,33){$\alpha_4$}
\end{picture}
\caption{The orbifold $\Sigma_{g; p_1, \dots, p_{2g+2}}$ with the geodesics $\alpha_1,\dots,\alpha_{2g+2},\alpha'_1,\dots,\alpha'_{2g+2}$ that form a tesselations into regions $A^\circ,  B^\circ, C^\circ, D^\circ, (Q_i)_{1\le i\le 2g+2}, (K_i)_{1\le i\le 2g+2}, (K'_i)_{1\le i\le 2g+2}$. 
The butterfly surfaces~$B(K_1, K_2, s_1), \dots, B(K_{2g+1}, K_{2g+2}, s_{2g+1})$, $B(K'_1, K'_2, s'_1), \dots, B(K'_{2g+1}, K'_{2g+2}, s'_{2g+1})$ are shown in green. 
Their union is the Birkhoff section~$S_{g; p_1, \dots, p_{2g+2}}$. 
}
\label{F:CasButterfly2}
\end{figure}

For $0\le i\le g$, the quadrilaterals $K_{2i+1}$ and $K_{2i+2}$ meet at a single vertex that we call~$s_{2i+1}$, and induce a butterfly as defined in Section~\ref{S:Butterfly}. 
We then consider the butterfly surface~$B(K_{2i+1}, K_{2i+2}, s_{2i+1})$. 
Similarly, $K'_{2i+1}$ and $K'_{2i+2}$ meet at a single vertex that we call~$s'_{2i+1}$, and induce a butterfly. 
We then consider the butterfly surface~$B(K'_{2i+1}, K'_{2i+2}, s'_{2i+1})$. 

Consider the union of all these butterflies $B(K_1, K_2, s_1)\cup B(K'_1, K'_2, s'_1)\cup B(K_3, K_4, s_3)\cup\dots\cup B(K'_{2g+1}, K'_{2g+2}, s_{2g+1})$ and denote it by~$S_{g; p_1, \dots, p_{2g+2}}$. 
It is a (horizontal) surface whose vertical boundary is empty and whose horizontal boundary is exactly the link~$L_{g; p_1, \dots, p_{2g+2}}$. 

\subsubsection{Computation of the genus}\label{S:GenusButterfly}

The surface~$S_{g; p_1, \dots, p_{2g+2}}$ is made of $2g+2$ butterflies. 
Each butterfly is made of two wings that are the lifts of vector fields on quadrilaterals. 
As in Section~\ref{S:GenusSphere}, each such wing contributes by~$-1$ to the Euler characteristic, so that $\chi(S_{g; p_1, \dots, p_{2g+2}})=-4g-4$. 
Since the boundary is the link~$L_{g; p_1, \dots, p_{2g+2}}$ which has $4g+4$ components, the genus of~$S_{g; p_1, \dots, p_{2g+2}}$ is 1. 

\subsubsection{Intersection with orbits of the geodesic flow}\label{S:IntersectionButterfly}

We have to prove that the surface $S_{g; p_1, \dots, p_{2g+2}}$ is a Birkhoff section for the geodesic flow on~$\U\Sigma_{g; p_1, \dots, p_{2g+2}}$. 
Since it is a butterfly surface, its interior is transverse to the geodesic flow, and its boundary is tangent to it. 

\begin{figure}[ht]
\begin{picture}(100,80)(0,0)
\put(0,0){\includegraphics[width=.65\textwidth]{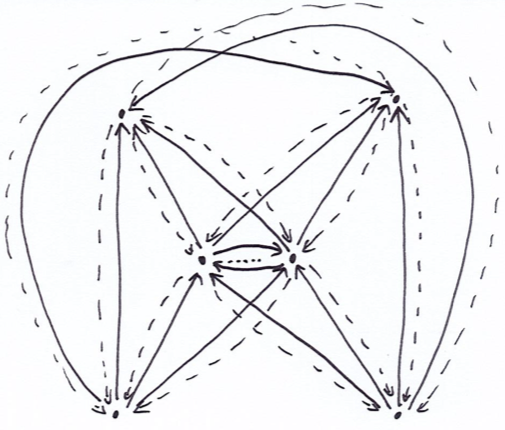}}
\put(19,61){$A^*$}
\put(78,63){$C^*$}
\put(20,-1){$B^*$}
\put(78,0){$D^*$}
\put(30,32){$Q_i^*$}
\put(60,32){$Q_j^*$}
\end{picture}
\caption{The dual graph~$G^*_{g; p_1, \dots, p_n}$. For simplicity we only draw two vertices among $Q^*_1, \dots, Q^*_{2g+2}$. Every long enough path in this graph must follow a bold edge, which means that the corresponding orbit of the geodesic flow intersects~$S_{g; p_1, \dots, p_n}$. 
}
\label{F:Sg333Dual}
\end{figure}

Define the graph~$G^*_{g; p_1, \dots, p_n}$ with $2g+6$ vertices labeled~$A^*, B^*, C^*, D^*, Q_1^*, \dots, Q_{2g+2}^*$ and whose edges are shown on Figure~\ref{F:Sg333Dual}. 
Some edges are dotted and some are plain according to the picture. 

Consider an arbitrary oriented geodesic~$\gamma$ on~$\OO_{g;p_1, \dots, p_n}$. 
Since all pieces of the decomposition are topological discs containing at most one conic points, $\gamma$ can only stay a bounded time in each of them. 
Moreover, unless~$\gamma$ is one of $\alpha_1, \dots, \alpha'_{2g+2}$ or is a union of arcs crossing each $K_i$ diagonally, it intersects one of the regions $A^\circ, B^\circ, C^\circ, D^\circ, Q_1, \dots, Q_{2g+2}$ after a uniformly bounded time.  

In this case~$\gamma$ can be decomposed in the form $\dots \cup\gamma_{-1}\cup \gamma_{0}\cup \gamma_{1}\cup\dots$, where each subsegment $\gamma_j$ crosses one of the regions $A^\circ, B^\circ, C^\circ, D^\circ, Q_1, \dots, Q_{2g+2}$ and has bounded length. 
Thus one can associate to~$\gamma$ an infinite path~$\gamma^*$ in $G^*_{g; p_1, \dots, p_n}$ that corresponds to the list of those faces visited by~$\gamma$. 
Now every bold edge in~$G^*_{g; p_1, \dots, p_n}$ corresponds to a positive intersection between the lift~$\vec\gamma$ with~$S_{0; p_1, \dots, p_n}$.  
Since $\gamma^*$ cannot avoid the bold edges (indeed, at least every third edge followed by~$\gamma^*$ is bold), $\gamma$ necessarily intersects~$S_{0; p_1, \dots, p_n}$ after a uniformly bounded time, so that~$S_{0; p_1, \dots, p_n}$ is indeed a genus-one Birkhoff section for~$\fgeod$. 

There remains the case where $\gamma$ is a concatenation of diagonals of the quadrilaterals~$K_i$. 
In this case, either the diagonal has one vertex in a vertex $s_i$ that is the middle of a butterfly, and at this point the lift~$\vec \gamma$ intersects~$S_{g; p_1, \dots, p_n}$, or the diagonal does not visit the middle of a butterfly, in which case the lift intersects~$S_{g; p_1, \dots, p_n}$ somewhere in the middle of the wing. 
In both cases, $\vec\gamma$ intersects~$S_{g; p_1, \dots, p_n}$. 

All-in-all, we checked that every long enough arc of orbit of the geodesic flow intersects~$S_{g; p_1, \dots, p_n}$, which is therefore a Birkhoff section. 


\subsection{Proof of Theorem~\ref{T:G1BS} when $g\ge 2$, $2g+2<n<2g+6$ and $p_1, \dots, p_n\ge 3$}\label{S:Butterfly2}

Let us explain how to modify the construction of the previous section in order to deal with $2g+3, 2g+4, 2g+5$, or~$2g+6$ conic points. 
For this we only have to add one conic point in one, two, three, or four of the faces $A, B, C$, and $D$. 
Adding these conic points changes the orbifold metric, but does not affect the construction nor the genus of the surface~$S_{g; p_1, \dots, p_n}$. 
Also, since there is no more than one conic point in each of the regions~$A^\circ, B^\circ, C^\circ, D^\circ$, this does not change the fact that~$S_{g; p_1, \dots, p_n}$ intersects all orbits of the geodesic flow, hence it is still a Birkhoff section. 


\section{Mixed surfaces and the general cases}\label{S:General}

We now prove Theorem~\ref{T:G1BS} in all remaining cases. 
As in the previous sections, we consider two cases: when the underlying orbifold is a sphere, and when it has positive genus. 
In both cases the Birkhoff section we construct has a vertical part in Section~\ref{S:Vertical} and a horizontal part made of butterfly surfaces as in Section~\ref{S:Horizontal}. 
The novelty consists in a horizontal surface that connects the two parts.

\subsection{Proof of Theorem~\ref{T:G1BS} when $g=0$}\label{S:GeneralSphere}

Fix an integer $n\ge 3$, and $p_1, \dots, p_n\ge 2$ so that $\frac1{p_1}+\dots+\frac1{p_n}<n{-}2$ (this is the condition for the considered orbifold to be hyperbolic). 
In section~\ref{S:Order2} we solved the case $p_1=\dots=p_n=2$ using vertical surfaces. 
In section~\ref{S:Sphere} we solved the case $p_1, \dots, p_n>2$ using butterfly surfaces.
Here we glue both constructions. 
We therefore assume that there is $k$ with $1<k<n$ such that $p_1=\dots=p_k=2$ and $p_{k+1}, \dots, p_n>2$.

\subsubsection{Choice of the metric}\label{S:MetricGeneralSphere}
The choice of the metric is very similar to the choice in~\ref{S:MetricOrder2} and~\ref{S:MetricSphere}, see Figure~\ref{F:CasSphere2}: 
consider a $n$-gon~$E$ in~$\Hy$ with vertices by~$b_1, \dots, b_n$ so that the angle at~$b_i$ is $\pi/p_i$. 
Consider the image~$F$ of $E$ under a reflection across~$(b_1b_2)$, and denote its vertices by $b_1, b_2, b'_3, \dots, b'_n$. 
Identify every side $[b_ib_{i+1}]$ of~$E$ with the side  $[b'_ib'_{i+1}]$ of~$F$. 
The quotient of $E\cup F$ under these identifications is a hyperbolic orientable 2-orbifold that is a sphere with $n$ conic points coming for the vertices~$b_1, \dots, b_n$. 
The total angle at~$b_i$ is~$2\pi/b_i$, so that the order of~$b_i$ is $p_i$. 
Denote it by~$\OO_{0; p_1, \dots, p_n}$. 
The polygons $E$ and $F$ induce two hemispheres on~$\OO_{0; p_1, \dots, p_n}$, while the segments~$[b_1b_2], [b_2b_3],\dots, [b_nb_1]$ form an equator.

\subsubsection{Choice of the boundary orbits}\label{S:BoundaryGeneralSphere}
For~$i=1, \dots, k{-}1$, the edge~$[b_ib_{i+1}]$ connects two conic points of order~$2$. 
Therefore it forms a periodic geodesic on~$\OO_{0; 2, \dots, 2}$ that we denote by~$\beta_i$.
Its lift~$\vrevec \beta_i$ is a single periodic orbit of~$\fgeod$. 

For $i=1, \dots, n$, denote by $r_i$ the rotation around~$b_i$ of angle~$+2\pi/p_i$. 
The group~$G$ generated by the family~$\{r_i\}_{i=1, \dots, n}$ generates the orbifold fundamental group of~$\OO_{0; p_1, \dots, p_n}$, that is, one can identity $\OO_{0; p_1, \dots, p_n}$ with the quotient $\Hy/G$. 
For $i=k, \dots, n$ the composition $r_{i}\circ r_{i+1}^{-1}$ (counting mod~$n$) is a hyperbolic isometry of~$\Hy$ that we denote by~$g_i$ and whose oriented axis we denote by~$\beta_i$ (as in Figure~\ref{F:Ell33} left). 
For $i=k{+}1, \dots, n{-}1$, the projection of~$\beta_i$ on~$\OO_{0;p_1, \dots, p_n}$ is a figure-eight curve (also denoted by~$\beta_i$) that winds positively around~$b_{i+1}$ and negatively around~$b_i$ (as in Figure~\ref{F:Ell33} right).
For $i=k$, the projection of~$\beta_k$ on~$\OO_{0;p_1, \dots, p_n}$ is a simple closed curve (also denoted by~$\beta_k$) that winds positively around~$b_{k+1}$ and~$b_k$, see Figure~\ref{F:ConnectionSphere}.
For $i=n$, the projection of~$\beta_n$ on~$\OO_{0;p_1, \dots, p_n}$ is a simple closed curve (also denoted by~$\beta_n$) that winds negatively around~$b_{1}$ and~$b_n$. 

Denote by~$\vec\beta$ the link~$\vrevec\beta_1\cup\dots\cup\vrevec\beta_{k-1}\cup\vec\beta_{k}\cup\dots\cup\vec\beta_n$ in~$\U\OO_{0; p_1, \dots, p_n}$.

\subsubsection{Choice of the surface}\label{S:SurfaceGeneralSphere}
Color $E$ in white and $F$ in black. 
For~$i=1, 2, \dots, k{-}1$, as in Section~\ref{S:SurfaceOrder2}, choose $s_i$ to be the white side of~$\beta_i$. 
A first part of the constructed surface consists of the surface~$R(\beta_1, s_1)\cup\dots\cup R(\beta_{k-1}, s_{k-1})$. 

For $i=k+1, \dots, n{-}1$, as in Section~\ref{S:SurfaceSphere}, call $m_i$ the double-point on~$\beta_i$. 
Then~$\beta_i$ intersects $\beta_{i-1}$ and $\beta_{i+1}$ twice in a neighbourhood of~$b_i$ (at points called $c_i^E$ and $c_i^F$ depending on the hemisphere in which they lie) and $b_{i+1}$ (at points called $c_{i+1}^E$ and $c_{i+1}^F$), as in Figure~\ref{F:S33} left. 
Therefore they define triangles~$T_i^-$ with vertices $m_i, c_i^E$ and $c_i^F$, and~$T_i^+$ with vertices $m_i, c_{i+1}^E$ and $c_{i+1}^F$. 
Thanks to the orientations of the edges, there is a butterfly surface $B(T_i^-, T_i^+, m_i)$. 
A second part of the constructed section consists of the union of the $n-k-1$ butterfly surfaces~$B(T_{k+1}^-,T_{k+1}^+,m_{k+1})\cup\dots\cup B(T_n^-,T_n^+,m_n)$. 

Finally, the curve~$\beta_n$ intersects~$\beta_{n-1}$ in two points that we call $c_{n}^E$ and $c_{n}^F$, and~$\beta_1$ in one point that we call~$c_1$. 
The curves $\beta_n$ and~$\beta_{n-1}$ delimit a bigon denoted by~$K_1$ (around which they rotate in the clockwise direction) and that contains the order-2 point~$b_1$. 
One fills~$K_1$ with a vector field~$v_1$ that has a singularity at~$b_1$ and is positively tangent to the oriented boundary, see Figure~\ref{F:ConnectionSphere} right. 
The corresponding horizontal surface $H(K_1, v_1)$ has some horizontal boundary in~$\vec\beta_n\cup\vec\beta_{n-1}$ and some vertical boundary in the fibers of the points $c_{n}^E, c_{n}^F$ and~$b_1$. 
It turns out that the vertical boundaries in the fibers of $c_{n}^E, c_{n}^F$ cancel out with the vertical boundary of~$B(T_n^-,T_n^+,m_n)$, while the vertical boundary in the fiber of~$b_1$ cancel out with the vertical boundary of~$R(\beta_1, s_1)$ (the picture in the unit tangent bundle near the fiber of~$b_1$ is the quotient of Figure~\ref{F:VerticalCancel} by an order 2 screw-motion). 

\begin{figure}[ht]
\begin{picture}(160,85)(0,0)
\put(0,-1){\includegraphics[width=.35\textwidth]{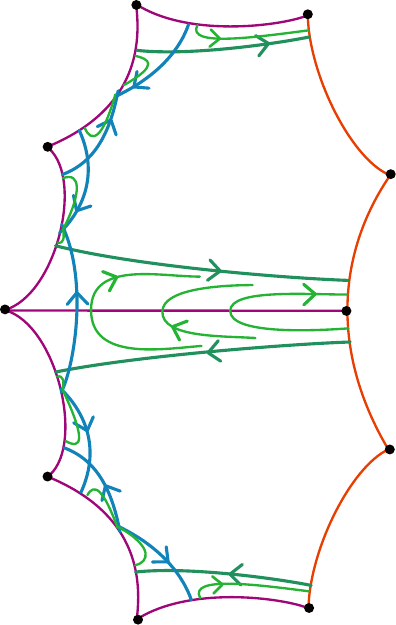}}
\put(68,15){\includegraphics[width=.5\textwidth]{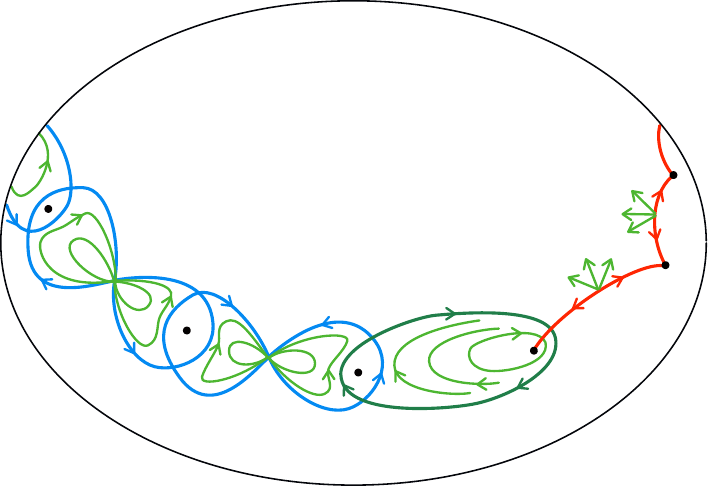}}
\put(28,58){$E$}
\put(28,20){$F$}
\put(47,40){$b_1$}
\put(52,60){$b_2$}
\put(-2,42.5){$b_n$}
\put(16,42){$K_1$}
\put(132,33){$\beta_1$}
\put(138.5,43){$\beta_2$}
\put(110,34.5){$\beta_n$}
\put(114,27.5){$K_1$}
\put(90,21){$\beta_{n-1}$}
\end{picture}
\caption{The surface~$S_{0; 2, 2, \dots, p_{k+1}, \dots, p_n}$.}
\label{F:ConnectionSphere}
\end{figure}

Similarly the curve~$\beta_{k}$ intersects~$\beta_{k+1}$ in two points that we call $c_{k}^E$ and $c_{k}^F$, and~$\beta_{k-1}$ in one point that we call~$c_{k-1}$. 
The curves $\beta_k$ and~$\beta_{k+1}$ delimit a bigon denoted by~$K_k$ (around which they rotate in the trigonometric direction) and that contains the order-2 point~$b_k$. 
One fills~$K_k$ with a vector field~$v_k$ that has a singularity at~$b_k$ and is positively tangent to the oriented boundary. 
The corresponding horizontal surface $H(K_k, v_k)$ has some horizontal boundary in~$\vec\beta_k\cup\vec\beta_{k-1}$ and some vertical boundary in the fibers of the points $c_{k-1}^E, c_{k-1}^F$ and~$b_k$. 
It turns out that the vertical boundaries in the fibers of $c_{k-1}^E, c_{k-1}^F$ cancel out with the vertical boundary of~$B(T_{k+1}^-,T_{k+1}^+,m_{k+1})$, while the vertical boundary in the fiber of~$b_k$ cancel with out the vertical boundary of~$R(\beta_k, s_k)$.  

We then consider the surface~$S_{0; p_1, \dots, p_n}:=R(\beta_1, s_1)\cup\dots\cup R(\beta_{k-1}, s_{k-1})\cup H(K_k, v_k)\cup B(T_{k+1}^-,T_{k+1}^+,m_{k+1})\cup\dots\cup B(T_n^-,T_n^+,m_n) \cup H(K_1, v_1)$. 
By the above discussion, it has no vertical boundary, and its horizontal boundary is exactly the oriented link~$\vec\beta$.

\subsubsection{Computation of the genus}\label{S:GenusGeneralSphere}

We now compute the Euler characteristic of the surface~$S_{0; p_1, \dots, p_n}$. 
As in Section~\ref{S:GenusOrder2}, every rectangle~$R(\beta_i, s_i)$, with $1\le i\le k{-}1$, contributes by~$-1$. 
As in Section~\ref{S:GenusSurface}, every butterfly surface~$B(T_i^-,T_i^+,m_i)$, with $k{+}1\le i\le n{-}1$ contributes by~$-1$. 

What is new compared to the previous sections are the parts $H(K_1, v_1)$ and $H(K_k, v_k)$. 
Both are annuli with two boundary components. 
They share the vertical parts of their boundaries with other surfaces, so that these parts contribute less to the Euler characteristic. 
The total contribution is then~$-1$. 

All-in-all the surface $S_{0; p_1, \dots, p_n}$ is made of $n$ parts that each contribute by~$-1$, so that $\chi(S_{0; p_1, \dots, p_n})=-n$. 
Since it has $n$ boundary components, its genus is~$1$. 

\medskip

The alternative method for computing the genus described in Remark~\ref{R:Genus} also apply here. 
The new contribution to estimate concerns the boundary orbits~$\vec\beta_n$ and $\vec\beta_k$. 
The tangent plane to~$S_{0; p_1, \dots, p_n}$ around these two orbits is vertical only in the points~$c^{E/F}_n$ and~$c^{E/F}_k$ respectively, where the contribution to the self-linking is~$-1/2$. 
Therefore these two boundary components also contribute by~$-1$ the Euler characteristic, hence the genus of~$S_{0; p_1, \dots, p_n}$ is one.

\subsubsection{Intersection with orbits of the geodesic flow}\label{S:IntersectionGeneralSphere}

We are left with proving that the surface~$S_{0; p_1, \dots, p_n}$ we just described intersects all orbits of the geodesic flow~$\fgeod$ on~$\U\Sigma_{0; p_1, \dots, p_n}$.

The reasoning is similar to Sections~\ref{S:IntersectionOrder2} and~\ref{S:IntersectionSphere}: 
by construction, the orbifold~$\OO_{0;p_1, \dots, p_n}$ consists of two hemispheres glued together. 
The collection~$\beta$ lies in a neighborhood of the equator $[b_1b_2]\cup[b_2b_3]\cup\dots\cup[b_nb_1]$. 
We call~$E^\circ$ and $F^\circ$ the two components of $\OO_{0;p_1, \dots, p_n}\setminus\beta$ that do not intersect the equator. 
For $i=k{+}1, \dots, n{-}1$, we call~$B_i$ the bigon containing~$b_i$. 

Consider an arbitrary geodesic~$\gamma$. 
Unless it is in the collection~$\beta$, every time it crosses a triangle of the form $T^-_{i}$, $T^+_{i}$, it then goes in one region of the form $E^\circ, F^\circ$, or $B_i$. 
Therefore $\gamma$ can be decomposed in the form $\dots \cup\gamma_{-1}\cup\gamma_{0}\cup\gamma_{1}\cup\dots$, where each subsegment $\gamma_j$ crosses one of the regions $E^\circ, F^\circ, B_k, \dots, B_n$ and has bounded length. 

One then forms the graph~$G^*_{0; p_1, \dots, p_n}$ as in Section~\ref{S:IntersectionSphere}, see Figure~\ref{F:S3333} top right. 
The geodesic $\gamma$ induces a path~$\gamma^*$ in~$G^*_{0; p_1, \dots, p_n}$, and every time this path uses a bold edge corresponds to a positive intersection of~$\gamma$ with~$S_{0; p_1, \dots, p_n}$. 
Since no path in~$G^*_n$ can avoid the bolded edges, if $\gamma$ is long enough, it necessarily intersects~$S_{0; p_1, \dots, p_n}$, so that the latter is indeed a genus-on Birkhoff section for~$\fgeod$.


\subsection{Proof of Theorem~\ref{T:G1BS} when $g\ge 1$ and $n\le 2g+6$}\label{S:GeneralSurface}

Fix an integer $g\ge 1$. 
Since the case $n=0$ has been treated in Section~\ref{S:Surface}, and since the cases $2g+2\le n\le  2g+6$ have been treated in Sections~\ref{S:Butterfly1} and~\ref{S:Butterfly2}, we fix $n$ with $0<n<2g+2$. 
For simplicity of the presentation we assume $n$ odd. 
The case when $n$ is even will be treated by adding one conic point in the region~$A$ as in Section~\ref{S:Butterfly2}. 
We also fix integers~$p_1, \dots, p_n\ge 2$.

\subsubsection{Choice of a suitable orbifold metric}\label{S:MetricGeneralSurface}
The surface is similar as the one in Section~\ref{S:MetricButterfly}, but with less conic points. 
It is depicted on Figures~\ref{F:SComplete0} and~\ref{F:SComplete1}.

Start with a $2g{+}2$-gon~$A$ in~$\Hy$, whose vertices are denoted by~$a_1, \dots, a_{2g+2}$, so that for $1\le i\le n$ the angle in~$a_i$ is ${\pi}/({2p_i})$, and for $n<i\le 2g+2$  it is $\pi/2$. 
Consider the symmetry~$s_h$ with axis~$(a_1a_2)$ and denote by~$B$ the image of~$A$. 
Also consider the symmetry~$s_v$ with axis~$(a_1a_{2g+2})$ and denote by~$D$ the image of~$A$. 
Finally consider the rotation~$r$ with center $a_1$ and angle $-{\pi}/{p_1}$, and denote by~$C$ the image of~$A$. 

Consider the following identifications of the sides of~$A\cup B\cup C\cup D$:
\begin{itemize}
\item $[a_{2i}a_{2i+1}]$ with $s_{v}([a_{2i}a_{2i+1}])$, 
\item $r([a_{2i}a_{2i+1}])$ with $s_{h}([a_{2i}a_{2i+1}])$, 
\item $[a_{2i+1}a_{2i+2}]$ with $s_{h}([a_{2i+1}a_{2i+2}])$, 
\item $r([a_{2i+1}a_{2i+2}])$ with $s_{v}([a_{2i_1}a_{2i+2}])$. 
\end{itemize}
 In other words, edges of $A$ are identified with their images under $s_{v}$ and $s_h$ in $B$ and $D$ alternatively. 

As in Section~\ref{S:Surface}, the quotient is a hyperbolic orbifold of genus~$g$, that we denote by~$\OO{g; p_1, \dots, p_{n}}$. 
For $1\le i\le n$, since every vertex~$a_i$ is identified with $s_v(a_i), s_h(a_i)$ and $r(a_i)$, in the quotient, the total angle at~$a_i$ is~${2\pi}/{p_i}$, hence $a_i$ is a conic point of order~$p_i$. 
For $n+1\le i\le 2g+2$, the total angle at~$a_i$ is~$2\pi$, hence $a_i$ is a regular point. 
The points $a_1, \dots, a_{n}$ projects onto $n$ conic points of respective orders~$p_1, \dots, p_{n}$. 

Also we denote by $e_i$ the projection of the edge~$[a_ia_{i+1}]$ in the quotient and by $e'_i$ the projection of the edge~$[r(a_i)r(a_{i+1})]$. 
The faces, sides and vertices of~$A\cup B\cup C\cup D$ then induce a graph~$G_g$ on~$\Sigma_g$ with~$4$ faces (that correspond to $A$, $B$, $C$, and $D$), $4g{+}4$ edges (namely $e_1, e'_1, \dots, e_{2g+2}, e'_{2g+2}$), and $2g{+}2$ vertices. 


\subsubsection{Choice of the boundary orbits}\label{S:BoundaryGeneralSurface}

For every $i$ in $\{0, \dots, (n{-}1)/2\}$, consider the oriented arc~$e_{2i+1}$ in~$A$ that starts in $[a_{2i}a_{2i+1}]$ (counting mod $n$) and is orthogonal to it, and ends in~$[a_{2i+2}a_{2i+3}]$ and is orthogonal to it. 
Also consider the oriented arc~$e_{2i}$ in~$A$ that starts in $[a_{2i+1}a_{2i+2}]$ and is orthogonal to it and end in~$[a_{2i-1}a_{2i}]$ and is orthogonal to it. 
These arcs extend into the faces~$B$ and $D$ respectively, and define oriented geodesics $(\alpha_i)_{0\le i\le n}$ on~$\Sigma_{g; p_1, \dots, p_{n}}$.

The same construction in~$C$ (or composing the previous paragraph with the rotation~$r$) yields oriented geodesics $(\alpha'_i)_{1\le i\le n}$ on~$\Sigma_{g; p_1, \dots, p_{n}}$.

Next, for every $i$ in $\{n{+}1, \dots, 2g{+}1\}$, the edges $[a_ia_{i+1}]$ and $[r(a_i)r(a_{i+1})]$ meet in $a_i$ and $a_{i+1}$ with an angle~$\pi$, so that their concatenation is a closed geodesic on~$\Sigma_{g; p_1, \dots, p_n}$, that we denote by~$\alpha_i$. 

\begin{figure}[ht]
\begin{picture}(130,65)(0,0)
\put(00,0){\includegraphics[width=.85\textwidth]{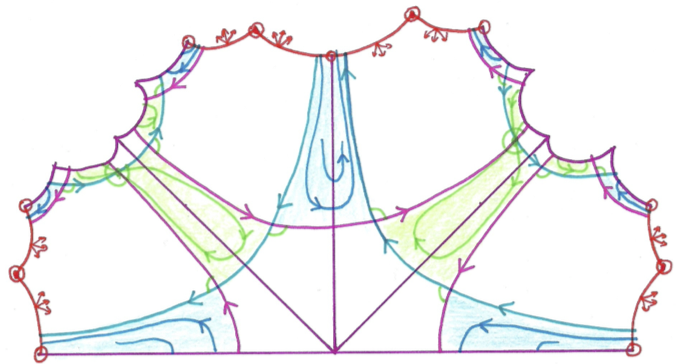}}
\put(80,45){$A^\circ$}
\put(105,15){$B^\circ$}
\put(40,45){$D^\circ$}
\put(20,15){$C^\circ$}
\put(63,-1){$a_1$}
\put(104,43){$a_2$}
\put(90,65){$a_{n+1}$}
\put(58,60){$a_{2g+2}$}
\put(83,35){$\alpha_1$}
\put(91,46){$\alpha_2$}
\put(67,43){$\alpha_0$}
\put(87,55){$\alpha_n$}
\put(94,25){$\alpha'_1$}
\put(105,32){$\alpha_2$}
\put(100,10){$\alpha_0$}
\put(115,26){$\alpha'_n$}
\put(28,23){$\alpha'_1$}
\put(17,31){$\alpha'_2$}
\put(22,10){$\alpha'_0$}
\put(9,26){$\alpha'_n$}
\put(80,65){$\alpha_{n+1}$}
\put(64,63){$\alpha_{2g+1}$}
\put(-3,23){$\alpha_{n+1}$}
\put(-3,7){$\alpha_{2g+1}$}
\put(37,65){$\alpha_{n+1}$}
\put(56,15){$Q_1$}
\put(57,35){$K_0$}
\put(20,-1.5){$K_0$}
\put(100,-1){$K_0$}
\put(80,25){$K_1$}
\put(40,25){$K'_1$}
\end{picture}
\caption{A fundamental domain for the orbifold $\Sigma_{g; p_1, \dots, p_{n}}$, here with $g=2, n=3$. 
The geodesics $\alpha_0,\alpha_2, \dots,\alpha_{n-1}$ and $\alpha'_0$, $\alpha'_2,\dots,\alpha'_{n-1}$ are shown in light blue. 
The geodesics $\alpha_1,\alpha_3, \dots,\alpha_{n}$ and $\alpha'_1,\alpha'_1,\dots,\alpha'_{n}$ are shown in pink. 
The geodesics $\alpha_{n+1},\dots, \alpha_{2g+1}$ are shown in orange. 
Together they form a tesselations into regions $A^\circ,  B^\circ, C^\circ, D^\circ, (Q_i)_{1\le i\le n}, (K_i)_{0\le i\le n}, (K'_i)_{0< i< n}$. 
The surface~$S_{g; p_1, \dots, p_n}$ is the union of the (green) butterfly surfaces~$B(K_1, K_2, s_1), B(K'_1, K'_2, s'_1), B(K_3, K_4, s_3), \dots, B(K'_{n-2}, K'_{n-1}, s'_{n-2})$, the (orange) vertical surfaces~$R(\alpha_{n+1}, s_{n+1}), \dots, R(\alpha_{2g+1}, s_{2g+1})$, connected by the two (blue) horizontal surfaces $H(K_0, v_0)$ and $H(K_n, v_n)$.
}
\label{F:SComplete0}
\end{figure}

\begin{figure}[ht]
\begin{picture}(130,75)(0,0)
\put(00,0){\includegraphics[width=.85\textwidth]{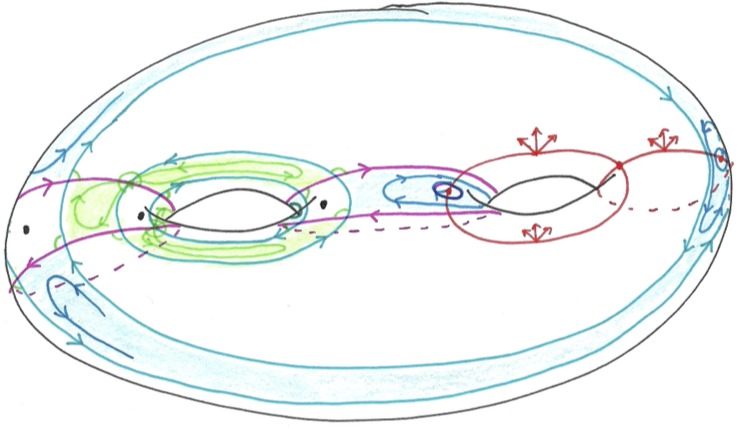}}
\end{picture}
\caption{Another view of $\Sigma_{g; p_1, \dots, p_{n}}$, after identifications of the sides of~$A, B, C, D$, with the same colors as in Figure~\ref{F:SComplete0}.}
\label{F:SComplete1}
\end{figure}

Finally consider the link~$L_{g; p_1, \dots, p_{n}}:=\vec\alpha_0\cup\vec\alpha_1\cup\dots\cup\vec\alpha_{n}\cup\vec\alpha'_0\cup\vec\alpha'_1\cup\dots\cup\vec\alpha'_{n}\cup\vrevec\alpha_{n+1}\cup\dots\cup \vrevec\alpha_{2g+1}$. 

\subsubsection{Choice of the surface}\label{S:SurfaceGeneralSurface}

The collection~$(\alpha_i)_{0\le i\le n}\cup(\alpha'_i)_{0\le i\le n}$ $\cup (\alpha_i)_{{n+1}\le i\le {2g+1}}$ decomposes~$\OO_{g; p_1, \dots, p_{n}}$ into $3n+6$ regions, namely $n$ quadrilaterals containing the points $a_1, \dots, a_{n}$ that we denote by~$Q_i$, $2n+2$ quadrilaterals containing the central parts of the edges $[a_ia_{i+1}]$ and $[r(a_i)r(a_{i+1})]$ (with $0\le i\le n$ and counting mod $2g+2$) that we denote by $K_i$ and $K'_i$ respectively, and four faces corresponding the centers of $A, B, C,$ and~$D$ that we denote by~$A^\circ, B^\circ, C^\circ, D^\circ$. 

Firstly, for $1\le i\le (n{-}1)/2$, the quadrilaterals $K_{2i-1}$ and $K_{2i}$ meet at a single vertex that we call~$s_{2i-1}$, and define a butterfly as defined in Section~\ref{S:Butterfly}. 
We then consider the butterfly surface~$B(K_{2i-1}, K_{2i}, s_{2i-1})$. 
Similarly, $K'_{2i-1}$ and $K'_{2i}$ meet at a single vertex that we call~$s'_{2i-1}$, and define a butterfly. 
We then consider the butterfly surface~$B(K'_{2i-1}, K'_{2i}, s'_{2i-1})$. 

Secondly, for $n{+}1\le i\le 2g{+}1$, denote by $e_i$ the edge~$[a_{i}a_{i+1}]$, and denote by~$s_i$ the side of~$A$. 
Similarly, denote by $e'_i$ the edge~$[r(a_{i})r(a_{i+1})]$, and denote by~$s'_i$ the side of~$C$. 
We the consider the vertical surfaces~$R(e_{i}, s_{i})$ and $R(e'_{i}, s'_{i})$. 

Thirdly, the region~$K_0$ consists of a quadrilateral bounded by $\alpha_0, \alpha_1, \alpha'_0, \alpha'_1$ and containing~$a_{2g+2}$ in its center. 
We consider a vector field~$v_0$ that rotates in the trigonometric direction around~$a_{2g+2}$, is tangent to the sides of~$K_0$, and is everywhere curved. 
We then consider the horizontal surface~$H(K_0, v_0)$. 
It is topologically an annulus with one boundary component which is the fiber of~$a_{2g+2}$, and the other component has four horizontal arcs in the lifts~$\vec\alpha_0, \vec\alpha_1, \vec\alpha'_0, \vec\alpha'_1$, and for vertical arcs at the vertices of~$K_0$. 
It turns out that the fiber~$\U a_{2g+2}$ is exactly one of the vertical boundary components of~$R(e_{2g+1}, s_{2g+1})\cup R(e'_{2g+1}, s'_{2g+1})$, see Figure~\ref{F:VerticalCancel}. 
Also, the vertical boundary components in the vertices of~$K_0$ match with some vertical boundary components of the butterfly surfaces~$B(K_1, K_2, s_1)$ and $B(K'_1, K'_2, s'_1)$. 

\begin{figure}[ht]
\includegraphics[width=.5\textwidth]{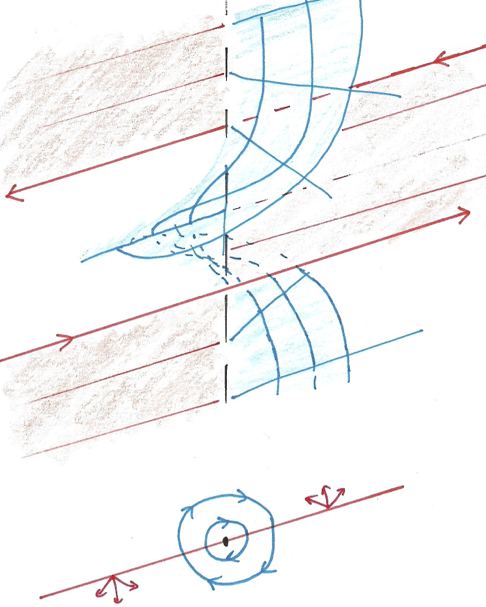}
\caption{How the blue surface~$H(K_0, v_0)$ connects with the orange surfaces $R(e_1, s_1), R(e'_1, s'_1)$ in~$\U a_{2g+2}$. 
In particular they are disjoint and their vertical boundaries match. 
The union is not smooth, but can easily be smoothed by staying transverse to the geodesic flow. 
}
\label{F:VerticalCancel}
\end{figure}

Similarly the region~$K_n$ consists of a quadrilateral bounded by~$\alpha_{n-1}, \alpha_{n}, \alpha'_{n-1}, \alpha'_{n}$ and containing~$a_{n+1}$ in its center. 
We consider a vector field~$v_n$ that rotates in the clockwise direction around~$a_n$, is tangent to the sides of~$K_n$, and is everywhere curved. 
We then consider the horizontal surface~$H(K_n, v_n)$. 
It is topologically an annulus. 
One boundary component is the fiber~$\U a_{n}$. It matches is exactly one of the vertical boundary components of~$R(e_{n+1}, s_{n+1})\cup R(e'_{n+1}, s'_{n+1})$. 
Also the vertical parts of the other boundary components match with some vertical boundary components of the butterfly surfaces~$B(K_{n-2}, K_{n-1}, s_{n-2})$ and $B(K'_{n-2}, K'_{n-1}, s'_{n-2})$. 

All-in-all we consider the surface defined as the union 
\begin{eqnarray*}
S_{g; p_1, \dots, p_n}&:=& B(K_{1}, K_{2}, s_{1})\cup B(K'_{1}, K'_{2}, s'_{1})\cup B(K_{3}, K_{4}, s_{3})\cup\dots\cup B(K'_{n-2}, K'_{n-1}, s'_{n-2})\\
&&\cup H(K_n, v_n)\\ 
&&\cup R(e_{n+1}, s_{n+1})\cup R(e'_{n+1}, s'_{n+1})\cup R(e_{n+2}, s_{n+2})\cup\dots\cup R(e'_{2g+1}, s'_{2g+1})\\
&&\cup H(K_0, v_0)\ .
\end{eqnarray*}

By the previous discussion (and in particular Figure~\ref{F:VerticalCancel}), all the vertical parts of the boundary two-by-two cancel, so that is has only horizontal boundary, and this (oriented) boundary is exactly~$-L_{g; p_1, \dots, p_n}$.

\subsubsection{Computation of the genus}\label{S:GenusGeneralSurface}

The surface~$S_{g; p_1, \dots, p_{n}}$ is made of several parts. 
We compute its Euler characteristic by computing the contribution of each part. 
As previously, when a vertex or an edge lies at the intersection of two or more parts, its contribution is equidistributed between those parts. 

The first part consists of $n-1$ butterflies (in green on the pictures). 
Each butterfly is made of two wings that are the lifts of vector fields on quadrilaterals. 
As in Section~\ref{S:GenusSphere}, each such wing contributes by~$-1$ to the Euler characteristic, so that 
this part contributes by~$-2(n-1)$. 

The second part consists of~$2(2g+1-n)$ vertical rectangles (in orange on the pictures). 
As in Section~\ref{S:GenusSurface}, each rectangle contributes by $-1$ to the Euler characteristic, so that this part contributes by~$-2(2g+1-n)$. 

The third part consists of the two gluing horizontal surfaces~$H(K_0, v_0)$ and $H(K_n, v_n)$ (in blue on the pictures). 
Each of them is an annulus. 
In particular the boundary in the fiber contributes by~$0$, the boundary that is the lift of the boundary of~$K_0$ ({\it resp.} $K_n$) contributes by~$-2$, and the face contributes by~$0$ since it is an annulus. 
Therefore these two gluing parts contribute by~$-4$. 

All-in-all, we have $\chi(S_{g; p_1, \dots, p_n})=-2(n-1)-2(2g+1-n)-4=-4g-4$.
Since the boundary of this surface is the link~$L_{g; p_1, p_n}$, which has $4g+4$ components, its genus is one.

\begin{remark}
This computation of the genus is the most crucial and delicate part of the construction (and of the proof). 
In the first version of the article, there was a mistake, which led to the construction of surface of genus~$2$ instead of a torus. 
Despite many attempts, we are unable to fix this gap, hence our construction only works with the restriction~$n\le 2g+6$. 
\end{remark}

\begin{remark}\label{R:AlternativeGenus}
As in Remark~\ref{R:Genus}, we provide another way to compute the genus of~$S_{g; p_1, \dots, p_n}$, which we recap here since this is the crucial part of the paper. 

By construction, $S_{g; p_1, \dots, p_n}$ is transverse to the geodesic flow~$\fgeod$. 
The latter is of Anosov type, so that the two non-singular 2-dimensional foliations~$\Fs, \Fu$ are tangent to it and invariant. 
The intersection~$S_{g; p_1, \dots, p_n}\cap\Fs$ is therefore a 1-dimensional foliation on~$S_{g; p_1, \dots, p_n}$ that is non-singular in the interior of the surface. 
However there are singularities on the boundary components of~$S_{g; p_1, \dots, p_n}$. 
These singularities correspond to tangencies between the tangent plane to~$S_{g; p_1, \dots, p_n}$ on a boundary component and the tangent plane to~$\Fs$. 
They are of index~$-1/2$. 

Given a boundary component~$\gamma$ of~$S_{g; p_1, \dots, p_n}\cap\Fs$ (which is a periodic orbit of~$\fgeod$), 
define its self-linking~$\slk^{\Fs, S_{g; p_1, \dots, p_n}}(\gamma)$ relatively to~$\Fs$ and~$S_{g; p_1, \dots, p_n}$ as the algebraic intersection number of~$S_{g; p_1, \dots, p_n}$ with a copy of~$\gamma$ pushed in the direction of~$\Fs$. 
By the Poincaré-Hopf formula, we have $\chi(S_{g; p_1, \dots, p_n}) = -\sum_{\gamma\in\partial S_{g; p_1, \dots, p_n}} |\slk^{\Fs, S_{g; p_1, \dots, p_n}}(\gamma)|$. 

Notice that, if~$S_{g; p_1, \dots, p_n}$ is indeed a Birkhoff section (as will be proved in the next section), the self-linking number $\slk^{\Fs, S_{g; p_1, \dots, p_n}}(\gamma)$ does not vanish. 
Therefore $S_{g; p_1, \dots, p_n}$ has genus 1 if and only if for every boundary component~$\gamma$ one has $\slk^{\Fs, S_{g; p_1, \dots, p_n}}(\gamma)=\pm1$. 

Back to our construction, we note that the normal direction~$\Fs$ to a periodic orbit is isotopic to the~$\U$-direction of the fiber, which is easier to visualize. 
Therefore we have to check that the for every boundary component~$\gamma$, the surface~$S_{g; p_1, \dots, p_n}$ is positively tangent to the fiber direction exactly once. 

For an orbit of type~$\vec\alpha_i$ with $1\le i\le n{-}1$, the surface~$S_{g; p_1, \dots, p_n}$ is vertical only when $\alpha_i$ intersects another orbit~$\alpha_{i\pm1}$ or~$\alpha'_{i\pm1}$. 
At such an intersection, the surface makes half a turn in the vertical direction. 
However this half turn is in the negative direction when the intersection point in at the extremity of a wing of a butterfly, and in the positive direction when the intersection corresponds to the body of the butterfly. 
Since every orbit of the type contains 3 intersections of the first type and 1 of the second type, its self-linking is~$-1$. 

The same happens for an orbit of type~$\vec\alpha'_i$ with $1\le i\le n$. 

For an orbit of type~$\vec{\alpha_i}$ with $n{+}1\le i\le 2g{+}1$, the surface~$S_{g; p_1, \dots, p_n}$ makes a half-turn in the negative direction every time~$\alpha_i$ intersects $\alpha_{i\pm 1}$, see Figure~\ref{F:Rec} right. 
Since there are two such intersection point, the self-linking is also~$-1$. 

The remain the cases of~$\vec\alpha_0$ and~$\vec\alpha_n$ which are similar. 
By the same argument as for~$\vec\alpha_1 , \dots,$ $\vec \alpha_{n-1}$, the surface makes two negative half-turns around~$\vec\alpha_0$ where~$\alpha_0$ intersects~$\alpha_1$. 
What is different is that where~$\alpha_0$ intersects~$\alpha_{2g+1}$ and~$\alpha'_{2g+1}$, there is no half-turn. 
Therefore one also find that the self-linking numbers of~$\vec\alpha_0$ and~$\vec\alpha_n$ are~$-1$. 

All-in-all, all boundary orbits have self-linking~$-1$, so that $S_{g; p_1, \dots, p_n}$ is indeed a punctured torus. 
\end{remark}

\subsubsection{Intersection with orbits of the geodesic flow}\label{S:IntersectionGeneralSurface}

We are left with proving that the surface $S_{g; p_1, \dots, p_{n}}$ is a Birkhoff section for the geodesic flow on $\U\Sigma_{g; p_1, \dots, p_{n}}$. 
By construction, its interior is transverse to the geodesic flow, and its boundary is tangent to it. 
In order to prove that it intersects all orbits of~$\fgeod$ in bounded time, we use the same type of argument as in Sections~\ref{S:IntersectionSurface}, \ref{S:IntersectionOrder2}, \ref{S:IntersectionSphere}, \ref{S:IntersectionButterfly}, and  \ref{S:IntersectionGeneralSphere}

Consider the graph~$G^*_{g; p_1, \dots, p_n}$ whose vertices are denoted by $A^*, B^*, C^*, D^*, Q^*_1, Q^*_2, \dots,$ $Q^*_n$. 
There is an oriented edge from a vertex $U^*$ to a vertex~$V^*$ everytime the corresponding faces~$U$ and $V$ share an edge (necessarily of type~$e_i$ or $e'_i$ with $n{+}1\le i\le 2g{+}1$) or are separated by a face of type~$K_i$ or~$K'_i
$ (with $1\le i\le n{-}1$). 
The graph we obtain is similar as the one in Figure~\ref{F:Sg333Dual}.

Now consider an oriented geodesic arc~$\gamma$ on~$\Sigma_{g; p_1, \dots, p_{n}}$. 
It intersects faces of type $Q_i, K_i, K'_i$, $A^\circ, B^\circ, C^\circ, D^\circ$. 
Given the adjacencies between those polygons, it defines a path~$\gamma^*$ in the dual graph~$G_{g;p_1, \dots, p_n}^*$. 
When $\gamma^*$ follows a bold edge in this graph, the lift~$\vec\gamma$ intersects once (positively) the surface~$S_{g; p_1, \dots, p_{n}}$. 
Since every path in~$G_{g; p_1, \dots, p_{n}}^*$ contains bold edges (in fact at least every third edge), every long enough arc of orbit of the geodesic flow in~$\U\Sigma_{g; p_1, \dots, p_{n}}$ intersects~$S_{g; p_1, \dots, p_{n}}$ positively.


\bibliographystyle{siam}

\end{document}